\newenvironment{proof}{\noindent{\textbf{Proof:}}}{$\blacksquare$\vskip\belowdisplayskip}
\definecolor{Red}{rgb}{1,0,0}
\definecolor{Blue}{rgb}{0,0,1}
\definecolor{Olive}{rgb}{0.41,0.55,0.13}
\definecolor{Green}{rgb}{0,1,0}
\definecolor{MGreen}{rgb}{0,0.8,0}
\definecolor{DGreen}{rgb}{0,0.55,0}
\definecolor{Yellow}{rgb}{1,1,0}
\definecolor{Cyan}{rgb}{0,1,1}
\definecolor{Magenta}{rgb}{1,0,1}
\definecolor{Orange}{rgb}{1,.5,0}
\definecolor{Violet}{rgb}{.5,0,.5}
\definecolor{Purple}{rgb}{.75,0,.25}
\definecolor{Brown}{rgb}{.75,.5,.25}
\definecolor{Grey}{rgb}{.5,.5,.5}
\definecolor{Black}{rgb}{0,0,0}
\newcommand{\dcal}{\mathcal{D}}
\newcommand{\fcal}{\mathcal{F}}
\newcommand{\lcal}{\mathcal{L}}
\newcommand{\qcal}{\mathcal{Q}}
\newcommand{\rcal}{\mathcal{R}}
\newcommand{\scal}{\mathcal{S}}
\newcommand{\tcal}{\mathcal{T}}
\newcommand{\zcal}{\mathcal{Z}}
\newcommand{\real}{\mathbb{R}}
\newcommand{\eps}{\varepsilon}
\newcommand{\ind}{\mathbbm{1}}
\newcommand{\bdm}{\begin{displaymath}}
\newcommand{\edm}{\end{displaymath}}
\newcommand{\bea}{\begin{eqnarray*}}
\newcommand{\eea}{\end{eqnarray*}}
\newcommand{\bean}{\begin{eqnarray}}
\newcommand{\eean}{\end{eqnarray}}
\newcommand{\expec}{\mathbb{E}}
\newcommand{\var}{\mathrm{Var}}
\newcommand{\cov}{\mathrm{Cov}}
\newcommand{\poly}{\mathrm{poly}}
\newtheorem{theorem}{Theorem}
\newtheorem{proposition}{Proposition}
\newtheorem{definition}{Definition}
\newtheorem{example}{Example}
\newtheorem{lemma}{Lemma}
\newtheorem{remark}{Remark}
\newcommand{\ignore}[1]{}
\renewcommand{\poly}{{\mbox{{\rm poly}}}}
 \def\eps{\varepsilon} \def\E{{\mathbb{E}}}
\def\R{\hbox{I\kern-.2em\hbox{R}}}
\def\|{\, | \, }
\def\v0{{\bf 0}}
\def\0{\hat{0}}
\def\1{\hat{1}}
\def\phi{\varphi}
\def\be{\begin{equation}}
\def\ee{\end{equation}}
\def\part{{\cal P}}
\def\R{\mathcal{R}}
\def\eps{\varepsilon}
\newcommand{\s}{X}
\renewcommand{\ss}{Z}
\newcommand{\weight}{\tau}
\newcommand{\eweight}{\hat\tau}
\newcommand{\path}{\mathrm{P}}
\newcommand{\dist}{\mathrm{D}}
\newcommand{\edist}{\ddot{\tau}}
\newcommand{\phy}{\tcal}
\newcommand{\rt}{r}
\newcommand{\states}{[q]}
\newcommand{\nstates}{q}
\newcommand{\evr}{\nu}
\newcommand{\rates}{\mathbb{Q}}
\newcommand{\gtr}{\mathbb{GTR}}
\newcommand{\gmrft}{\mathbb{GMRFT}}
\newcommand{\law}{\dcal}
\newcommand{\sphy}{\mathbb{BY}}
\newcommand{\deep}{\widehat{\mathbb{FP}}}
\newcommand{\longtest}{\widehat{\mathbb{SD}}}
\newcommand{\diag}{\mathrm{diag}}
\newcommand{\bias}{\beta}
\newcommand{\bbias}{\mathcal{B}}
\newcommand{\vvar}{\mathcal{V}}
\newcommand{\nstatespotts}{q}
\newcommand{\qq}{\nstatespotts}
\newcommand{\ebias}{\widehat{\bias}}
\newcommand{\ebbias}{\widehat{\bbias}}
\newcommand{\erho}{\hat\rho}
\newcommand{\triplet}{\widehat{\mathbb{O}}}
\newcommand{\eigenvec}[1]{\nu^{#1}}
\newcommand{\eeigenvec}[1]{\hat\nu^{#1}}
\newcommand{\eigenval}[1]{\lambda_{#1}}
\newcommand{\edgeeigenval}[1]{\theta^{(#1)}}
\newcommand{\eedgeeigenval}[1]{\hat\theta^{(#1)}}
\newcommand{\edgeeigenvec}[1]{\mu^{#1}}
\newcommand{\eedgeeigenvec}[1]{\hat\mu^{#1}}
\newcommand{\eigenratio}{\varrho}
\newcommand{\critks}{g^{\star}_{\mathrm{KS}}}
\newcommand{\critq}{g^{\star}_{Q}}
\newcommand{\weightmax}{\weight^+}
\newcommand{\es}{\widehat{X}}
\newcommand{\neigh}{\mathrm{N}}
\newcommand{\covmat}{\Sigma_{[n]}}
\newcommand{\covmatz}{\Sigma_{[[n]]}}
\newcommand{\ones}{\mathbf{e}}
\renewcommand{\vec}{\mathbf{v}}
\newcommand{\lin}{\lcal}
\newcommand{\desc}[1]{\lfloor#1\rfloor}
\renewcommand{\sb}[1]{\s_{\desc{#1}}}
\newenvironment{app-proof}[1]{\noindent{\textbf{Proof of #1:}}}{$\blacksquare$\vskip\belowdisplayskip}
\author{
Elchanan Mossel\thanks{Weizmann Institute and UC Berkeley.}\and
S\'ebastien Roch\thanks{UCLA. Work supported by NSF grant DMS-1007144.}\and
Allan Sly\thanks{UC Berkeley.}
}
\title{\vspace{0cm}
Robust estimation of latent tree graphical models:
Inferring hidden states with inexact parameters}
\begin{document}

\maketitle

\addtocounter{page}{-1}
\thispagestyle{empty}

\begin{abstract}
Latent tree graphical models are widely
used in computational biology, signal and
image processing, and network tomography.
Here we design a new efficient, estimation
procedure for latent tree models,
including Gaussian and discrete, reversible
models, that significantly improves on
previous sample requirement bounds.
Our techniques are based on a new
hidden state estimator which is robust
to inaccuracies in estimated parameters.
More precisely, we prove that latent tree
models can be estimated with high probability
in the so-called Kesten-Stigum regime
with $O(\log^2 n)$ samples.
\end{abstract}

{\bf Keywords:}  Gaussian graphical models on trees, Markov random fields on trees, phase transitions, Kesten-Stigum reconstruction bound

\clearpage

\section{Introduction}

\paragraph{Background}
Latent tree graphical models
and other related models
have been widely
studied in mathematical statistics, machine learning,
signal and image processing, network tomography, computational
biology, and statistical physics.
See e.g.~\cite{Anderson:58,KollerFriedman:09,Willsky:02,CCLNY:04,SempleSteel:03,
EvKePeSc:00} and references therein.
For instance, in
phylogenetics \cite{Felsenstein:04},
one seeks to reconstruct the evolutionary history
of living organisms
from molecular data extracted from modern species.
The assumption is that molecular data consists of aligned sequences and
that each position in the sequences evolves independently according to a
Markov random field on a tree, where the key parameters are (see Section~\ref{section:definitions} for formal definitions):
\begin{itemize}
\item {\em Tree.}
An evolutionary tree $T$, where the leaves are the modern species and each branching represents a past speciation event.
\item {\em Rate matrix.}
A $\nstates \times \nstates$ mutation rate matrix $Q$, where $\nstates$ is the alphabet size.
A typical alphabet arising in biology would be $\{\mathrm{A},\mathrm{C},\mathrm{G},\mathrm{T}\}$.
Without loss of generality, here we denote the alphabet by $\states = \{1,\ldots,\nstates\}$.
The $(i,j)$'th entry of $Q$ encodes the rate at which state $i$ mutates into state $j$.
We normalize the matrix $Q$ so that its spectral gap is $1$.
\item {\em Edge weights.}
For each edge $e$, we have a scalar branch length $\weight_{e}$ which measures the total amount of evolution along edge $e$.
(We use edge or branch interchangeably.)
Roughly speaking, $\weight_{e}$ is the time elapsed between the end points of $e$. (In fact the time is multiplied by an edge-dependent
overall mutation rate because of our normalization of $Q$.) We also think of $\weight_{e}$ as the ``evolutionary distance''
between the end points of $e$.
\end{itemize}
Other applications, including those involving
Gaussian models
(see Section~\ref{section:definitions}), are similarly
defined. Two statistical
problems naturally arise in this context:
\begin{itemize}
\item
{\em Tree Model Estimation (TME).}
Given $k$ samples of the above process at
the observed nodes, that is, at the leaves of the
tree, estimate the topology of the tree as well as
the edge weights.
\item
{\em Hidden State Inference (HSI).}
Given a fully specified tree model
and a single sample at the observed nodes,
infer the state at the
(unobserved) root of the tree.
\end{itemize}
In recent years, a convergence of techniques
from statistical physics and theoretical
computer science has provided fruitful
new insights on the deep connections between
these two problems, starting with~\cite{Mossel:04a}.

\paragraph{Steel's Conjecture}
A crucial parameter in the second problem above
is $\weightmax(T) = \max_e \weight_{e}$,
the maximal edge weight in the tree.
For instance, for the two-state symmetric $Q$
also known as the Ising model,
it is known that
there exists a critical parameter
$\critks = \ln\sqrt{2}$
such that, if $\weightmax(T) < \critks$,
then it is possible to perform HSI (better than random;
see the Section~\ref{sec:ThmNonrecon} for additional details).
In contrast, if $\weightmax(T) \geq \critks$,
there exist trees for which HSI is impossible, that is, the correlation between the best root estimate and its true value decays exponentially in the depth of the tree.
The regime $\weightmax(T) < \critks$ is known
as the Kesten-Stigum (KS) regime~\cite{KestenStigum:66}.

A striking and insightful conjecture of Steel
postulates a deep connection between TME and HSI~\cite{Steel:01}.
More specifically
the conjecture states that for the Ising model,
in the KS regime,
high-probability TME may be achieved
with a number of samples
$k = O(\log n)$.
Since the number of trees on $n$ labelled leaves is $2^{\Theta(n \log n)}$,
this is an optimal sample requirement
up to constant factors.
The proof of Steel's conjecture was established in~\cite{Mossel:04a} for the Ising model on balanced trees and in~\cite{DaMoRo:11a} for rate matrices on trees with discrete edge lengths.
Furthermore, results of Mossel~\cite{Mossel:03,Mossel:04a}
show that for $\weightmax(T) \geq \critks$ a polynomial sample requirement is needed for correct TME,
a requirement achieved by several estimation algorithms~\cite{ErStSzWa:99a, Mossel:04a, Mossel:07, GrMoSn:08, DaMoRo:11b}.
The previous results have been extended to general
reversible $Q$ on alphabets of size $\nstates\geq 2$~\cite{Roch:10,MoRoSl:11}.
(Note that in that case a more general threshold
$\critq$ may be defined, although little rigorous
work has been dedicated to its study.
See~\cite{Mossel:01,Sly:09,MoRoSl:11}.
In this paper we consider only the KS regime.)

\paragraph{Our contributions}
Prior results for general trees and general rate matrix $Q$,
when $\weightmax(T) < \critks$, have assumed that
edge weights are discretized. This assumption
is required to avoid dealing with the sensitivity
of root-state inference to inexact (that is,
estimated) parameters.
Here we design a new HSI
procedure in the KS regime which is provably
robust to inaccuracies in the parameters (and,
in particular, does not
rely on the discretization assumption).
More precisely, we prove that $O(\log^2 n)$
samples suffice to solve the TME and
HSI problems in the KS regime without
discretization. We consider
two models in detail: discrete, reversible
Markov random fields (also known as GTR models
in evolutionary biology), and Gaussian models.
As far as we know, Gaussian models
have not previously been studied in the context
of the HSI phase transition. (We derive
the critical threshold for Gaussian models
in Section~\ref{sec:ThmNonrecon}.) Formal statements of our results
can be found in Section~\ref{section:results}.
Section~\ref{section:sketch}
provides a sketch of the proof.

\paragraph{Further related work}
For further related work on
sample requirements in tree graphical model estimation,
see \cite{ErStSzWa:99b, MosselRoch:06,
TaAnWi:10, TaAnTo+:11,
ChTaAn+:11,TaAnWi:11,BhRaRo:10}.

\subsection{Definitions}\label{section:definitions}

\noindent\textbf{Trees and metrics.}
Let $T = (V,E)$ be a tree with leaf set $[n]$,
where $[n] = \{1,\ldots,n\}$.
For two leaves $a,b \in [n]$, we denote by $\path(a,b)$
the set of edges on the unique path between $a$ and $b$.
For a node $v \in V$, let $\neigh(v)$ be the neighbors
of $v$.
\begin{definition}[Tree Metric]
A {\em tree metric} on $[n]$ is a positive function
$\dist:[n]\times[n] \to (0,+\infty)$ such that there exists
a tree $T = (V,E)$ with leaf set $[n]$ and an edge weight
function $w:E \to (0,+\infty)$ satisfying the following: for all leaves
$a,b \in [n]$
\begin{equation*}
\dist(a,b) = \sum_{e\in \path(a,b)} w_e.
\end{equation*}
\end{definition}
In this work, we consider dyadic trees.
Our techniques can be extended to complete trees
of higher degree. We discuss general
trees in the concluding remarks.
\begin{definition}[Balanced tree]
A  {\em balanced tree} is a rooted,
edge-weighted, leaf-labeled $h$-level
dyadic tree
$\phy = (V,E,[n],\rt;\weight)$
where:
$h \geq 0$ is an integer;
$V$ is the set of vertices;
$E$ is the set of edges;
$L = [n] = \{1,\ldots,n\}$ is the set of leaves
with $n=2^h$;
$\rt$ is the root;
$\weight : E \to (0,+\infty)$ is a positive edge weight function.
We denote by $\left(\weight(a,b)\right)_{a,b\in [n]}$
the tree metric corresponding to the balanced tree
$\phy = (V,E,[n],\rt;\weight)$.
We extend $\weight(u,v)$ to all vertices $u,v \in V$.
We let $\sphy_n$ be the set of all such balanced
trees on $n$ leaves
and we let $\sphy = \{\sphy_{2^h}\}_{h\geq 0}$.
\end{definition}

\noindent\textbf{Markov random fields on trees.}
We consider Markov models on trees where only
the leaf variables are observed.
The following discrete-state model is standard
in evolutionary biology. See e.g.~\cite{SempleSteel:03}.
Let $\nstates \geq 2$.
Let $\states$ be a state set
and $\pi$ be a distribution on $\states$ satisfying
$\pi_x > 0$ for all $x \in \states$.
The $\nstates\times\nstates$ matrix
$Q$ is a {\em rate matrix} if
$Q_{xy} > 0$ for all $x\neq y$ and
$\sum_{y \in \states} Q_{x y} = 0$,
for all $x \in \states$.
The rate matrix $Q$ is {\em reversible with respect to $\pi$}
if $\pi_x Q_{x y} = \pi_y Q_{y x}$, for all $x, y \in \states$.
By reversibility, $Q$ has $\nstates$
real eigenvalues
$0 = \lambda_1 > \lambda_2 \geq \cdots \geq \lambda_{\nstates}$.
We normalize $Q$ by fixing $\lambda_2 = -1$.
We denote by $\rates_\nstates$ the set of all such rate matrices.
\begin{definition}[General Time-Reversible (GTR) Model]
For $n \geq 1$,
let
$$\phy = (V,E,[n],\rt;\weight)$$
be a balanced tree.
Let $Q$ be a $\nstates\times\nstates$ rate matrix
reversible with respect to $\pi$. Define
the transition matrices $M^e = e^{\weight_e Q}$,
for all $e\in E$.
The GTR model on $\phy$ with rate matrix $Q$
associates a state $\ss_v$ in $\states$ to each
vertex $v$ in $V$ as follows:
pick a state for the root $\rt$ according to $\pi$;
moving away from the root, choose a state for each
vertex $v$ independently according to the distribution
$(M^e_{\ss_u, j})_{j\in\states}$,
with $e = (u,v)$ where $u$ is the parent of $v$.
We let $\gtr_{n,\nstates}$
be the set of all $\nstates$-state GTR models on
$n$ leaves. We denote $\gtr_\nstates =
\left\{\gtr_{2^h,\nstates}\right\}_{h \geq 0}$.
We denote by $\ss_W$ the vector of states on the vertices
$W\subseteq V$. In particular, $\ss_{[n]}$ are the states
at the leaves.
We denote by $\law_{\phy,Q}$ the distribution of $\ss_{[n]}$.
\end{definition}
GTR models encompass several special cases such as the Cavender-Farris-Neyman (CFN) model
and the Jukes-Cantor (JC) model.
\begin{example}[$\qq$-state Symmetric Model]\label{ex:symmetric}
The {\em $\qq$-state Symmetric model} (also call\-ed $\qq$-state Potts model) is the GTR model with $\qq \geq 2$ states,
$\pi = (1/\qq,\ldots, 1/\qq)$,
and $Q = Q^{\qq\mathrm{-POTTS}}$ where
\begin{equation*}
Q^{\qq\mathrm{-POTTS}}_{ij}
=
\left\{
\begin{array}{ll}
-\frac{\qq - 1}{\qq} & \mbox{if $i=j$}\\
\frac{1}{\qq} & \mbox{o.w.}
\end{array}
\right.
\end{equation*}
Note that $\lambda_2(Q) = -1$.
The special cases $\qq=2$ and $\qq=4$ are called respectively the CFN and JC models in the biology literature.
We denote their rate matrices by $Q^{\mathrm{CFN}}, Q^{\mathrm{JC}}$.
\end{example}
A natural generalization of the CFN model which is also included in the GTR framework is the Binary Asymmetric Channel.
\begin{example}[Binary Asymmetric Channel]
Letting $\nstates = 2$ and
$\pi = (\pi_{1}, \pi_{2})$,
with $\pi_{1},\pi_{2} > 0$, we can take
\begin{equation*}
Q
=
\left(
\begin{array}{cc}
-\pi_{2} & \pi_{2}\\
\pi_{1} & -\pi_{1}
\end{array}
\right).
\end{equation*}
\end{example}
The following transformation
will be useful~\cite{MosselPeres:03}.
Let $\evr$ be a right eigenvector of the GTR matrix $Q$ corresponding to the eigenvalue $-1$.
Map the state space to the real line by defining $\s_x = \evr_{\ss_x}$ for all $x\in[n]$.

We also consider Gaussian Markov
Random Fields on Trees (GMRFT).
Gaussian graphical models,
including Gaussian tree models,
are common in statistics, machine
learning as well as signal
and image processing.
See e.g.~\cite{Anderson:58,Willsky:02}.
\begin{definition}[Gaussian Markov Random Field
on a Tree (GMRFT)]
For $n \geq 1$,
let $\phy = (V,E,[n],\rt;\weight)$ be a balanced tree.
A GMRFT on $\phy$ is a multivariate Gaussian vector
$\s_V = (\s_{v})_{v \in V}$ whose covariance matrix $\Sigma
= (\Sigma_{u v})_{u,v\in V}$ with inverse
$\Lambda = \Sigma^{-1}$ satisfies
$$
(u,v) \notin E, u \neq v \implies \Lambda_{uv} = 0.
$$
We assume that only the states at the
leaves $\s_{[n]}$ are observed.
To ensure identifiability (that is,
to ensure that two different sets of
parameters generate different distributions
at the leaves), we assume that all
internal nodes have zero mean and
unit variance and that all non-leaf edges
correspond to a nonnegative correlation.
Indeed shifting and scaling the states at the
internal nodes does not affect the leaf
distribution.
For convenience, we extend this assumption
to leaves and leaf edges.
With the choice
$$
\Sigma_{uv}
= \prod_{e \in \path(u,v)} \rho_e,
\quad u,v \in V,
$$
where $\rho_e = e^{-\weight_e}$, for all $e \in E$,
a direct calculation shows that
\begin{displaymath}
\Lambda_{u v}
= \begin{cases}
1 + \sum_{w \in \neigh(v)} \frac{\rho_{(v, w)}^2}
{1 - \rho_{(v, w)}^2}, & \text{if $u = v$},\\
- \frac{\rho_{(u, v)}}{1 - \rho_{(u, v)}^2},
& \text{if $(u,v) \in E$},\\
0, & \text{o.w.}
\end{cases}
\end{displaymath}
(Note that, in computing $(\Sigma\Lambda)_{u v}$
with $u\neq v$,
the product $\prod_{e \in \path(u,w)} \rho_e$ factors out, where
$w \in \neigh(v)$ with $(w,v) \in \path(u,v)$.)
In particular,
$\{-\log|\Sigma_{uv}|\}_{uv \in [n]}$ is a tree
metric.
We denote by $\law_{\phy,\Sigma}$ the distribution
of $\s_{[n]}$.
We let $\gmrft_{n}$
be the set of all GMRFT models on
$n$ leaves. We denote $\gmrft =
\left\{\gmrft_{2^h}\right\}_{h \geq 0}$.
\end{definition}
\begin{remark}
Our techniques extend to cases
where leaves and leaf edges
have general means and covariances.
We leave the details to the reader.
\end{remark}
Equivalently, in a formulation closer
to that of the GTR model above,
one can think of a GMRFT model
as picking a root value according
to a standard Gaussian distribution
and running independent
Ornstein-Uhlenbeck processes on the edges.

Both the GTR and GMRFT models are
{\em globally Markov}: for all disjoint
subsets $A,B,C$ of $V$ such
that $B$ {\em separates} $A$ and $C$,
that is, all paths between $A$ and $C$
go through a node in $B$, we have
that the states at $A$ are
conditionally independent
of the states at $C$ given the states at $B$.

\subsection{Results}
\label{section:results}

Our main results are the following. We are given
$k$ i.i.d.~samples from a GMRFT or GTR model
and we seek to estimate the tree structure
with failure probability going to $0$ as the
number of leaves $n$ goes to infinity.
We also estimate edge weights within
constant tolerance.
\begin{theorem}[Main Result: GMRFT Models]
\label{thm:maingaussian}
Let $0 < f < g < +\infty$
and denote by $\gmrft^{f,g}$ the set of
all GMRFT models
on balanced trees $\phy = (V,E,[n],\rt;\weight)$ satisfying
$f < \weight_e < g,\ \forall e\in E$.
Then, for all $0 < f < g < \critks = \ln \sqrt{2}$,
the tree structure estimation problem on
$\gmrft^{f,g}$
can be solved with $k = \kappa \log^2 n$
samples,
where $\kappa = \kappa(f,g) > 0$
is large enough. Moreover all edge weights
are estimated within constant tolerance.
\end{theorem}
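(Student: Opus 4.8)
The plan is to recover the tree recursively, level by level from the leaves upward, in the spirit of the combinatorial algorithms behind Steel's conjecture. At the start of round $\ell$ (for $\ell=0,\dots,h-1$, with $h=\log_2 n$) we hold a forest $\forestno_\ell$ whose components are the correctly reconstructed depth-$\ell$ subtrees of $\phy$, together with, for the root $v$ of each component, an \emph{estimated state} $\es_v$: a real-valued statistic computed from the $k$ observed leaf samples that is positively correlated with the true (unobserved) state $\s_v$ at that root and has its variance under control. The roots of $\forestno_\ell$ play the role of ``effective leaves'' at level $\ell$: we (i) run an empirical distance oracle on the estimated states to locate cherries and validate each with a four-point test, (ii) merge each cherry into a new node, estimating its two pendant branch lengths, and (iii) apply the robust hidden-state estimator (below) to produce the estimated state at each new root from the estimated states of its two children. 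After $h$ rounds, $\forestno_h$ is the full topology, and the branch-length estimates collected along the way are within the prescribed constant tolerance.

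\textbf{Distance oracle and concentration.} Given nodes $u,v$ carrying estimated states $\es_u,\es_v$, set $\widehat\Sigma_{uv}=\tfrac1k\sum_{i=1}^k \es_u^{(i)}\es_v^{(i)}$ and $\widehat{\dist}(u,v)=-\log|\widehat\Sigma_{uv}|$. Each $\es_v$ is a bounded linear functional of the leaf sample with $\Theta(1)$ variance, so $\widehat\Sigma_{uv}$ is an average of i.i.d.\ sub-exponential variables; with $k=\kappa\log^2 n$ and $\kappa=\kappa(f,g)$ large, $|\widehat{\dist}(u,v)-\dist(u,v)|<\delta$ holds simultaneously over all pairs at effective distance below a fixed window $W=W(f,g)$, with failure probability $n^{-\Omega(\kappa)}$; a union bound over the $O(n^2)$ pairs and $h$ rounds is negligible. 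Since locating and checking a cherry, and estimating its pendant lengths, only involve distances below such a window, the combinatorial step recovers the correct local topology at every level (here $f>0$ keeps every $\rho_e=e^{-\weight_e}$ bounded away from $1$, so distances are bounded below and the four-point test has a constant margin).

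\textbf{The robust hidden-state estimator.} This is the crux, and the source of the extra logarithmic factor over the exact-parameter case. For a reconstructed depth-$\ell$ component with estimated edge correlations $\widehat\rho_e=e^{-\eweight_e}$, define the estimated root state by the normalized linear recursion $\es_\rt=\big(\widehat\rho_{(\rt,u_1)}\es_{u_1}+\widehat\rho_{(\rt,u_2)}\es_{u_2}\big)/\widehat N_\ell$, where $u_1,u_2$ are the two children and $\widehat N_\ell$ is a variance-normalizing factor built from the $\widehat\rho_e$. Two facts are needed. \emph{Signal retention:} when $g<\critks=\ln\sqrt2$ every edge satisfies the Kesten--Stigum inequality $2\rho_e^2>1$ (indeed $\rho_e=e^{-\weight_e}>e^{-g}>1/\sqrt2$), which is exactly the condition ensuring this recursion does not attenuate the signal — the correlation between $\es_\rt$ and $\s_\rt$ stays bounded below by a positive constant uniformly in $\ell$. \emph{Parameter robustness:} the bias and the signal-to-noise ratio of $\es_\rt$ depend on the $\widehat\rho_e$ only through smooth expressions that are Lipschitz on the compact range $e^{-g}\le\rho_e\le e^{-f}$, so replacing each true $\rho_e$ by an estimate within $\delta$ degrades the estimator by $O(\delta)$ per edge; summing the first-order perturbations along the depth-$\ell$ paths and across the $\le h$ levels of recursion, the accumulated loss is $O(h\delta)=O(\delta\log n)$. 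Forcing this below a target constant requires $\delta=O(1/\log n)$, which — since estimation accuracy scales like $1/\sqrt k$ — is precisely what $k=\Theta(\log^2 n)$ samples provide.

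\textbf{Main obstacle.} The delicate point is this interplay of robustness and recursion: the estimated state at level $\ell$ is assembled from \emph{estimated} states and \emph{estimated} parameters produced at level $\ell-1$, so errors compound down the tree, and one must show the per-level error can be driven to $O(1/\log n)$ \emph{uniformly over the whole family} $\gmrft^{f,g}$ and over all depths. Two ingredients make this work. First, a quantitative perturbation analysis of the linear root estimator in its parameters, valid all the way up to the KS threshold where the normalizing sums are only barely bounded — this is where the Lipschitz constants must be controlled. Second, a Markov-type decomposition of each estimated leaf state as ``true state $+$ conditionally independent noise'', inherited from the global Markov property of the model, so that the estimated pairwise distances among reconstructed nodes still obey an approximate additive (four-point) relation; without it the combinatorial step would have no consistent topology to lock onto. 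Establishing these two, and threading the constants so the $O(\log^2 n)$ bound closes, is the technical heart; the concentration, cherry-picking and constant-tolerance length estimation are then routine.
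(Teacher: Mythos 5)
Your proposal takes the same high-level scaffolding as the paper (level-by-level reconstruction, quartet tests on internal distance estimates from a recursive linear root estimator, fresh concentration at each level), but it contains a genuine gap at exactly the step the paper identifies as its main contribution. You propose the bare recursion $\es_\rt=(\widehat\rho_{e_1}\es_{u_1}+\widehat\rho_{e_2}\es_{u_2})/\widehat N$, argue that each inexact $\widehat\rho_e$ perturbs the estimator by $O(\delta)$ per edge (Lipschitz in the parameters), and then accept that the resulting bias accumulates to $O(\delta\log n)$ over $h=\log_2 n$ levels, concluding that $\delta=O(1/\log n)$ is needed and that $k=\Theta(\log^2 n)$ supplies it because ``accuracy scales like $1/\sqrt k$.'' This budget does not close. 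To get accuracy $\delta$ on a single covariance estimate with failure probability small enough to union-bound over the $\mathrm{poly}(n)$ pairs and $\log n$ levels, sub-exponential concentration forces $K\delta^2\gtrsim\log n$, i.e.\ $K\gtrsim\log n/\delta^2$. With $\delta=1/\log n$ this is $K\gtrsim\log^3 n$ per estimation, which is precisely the bound the theorem is designed to improve upon; the paper says as much in Section~\ref{section:sketch}, where the ``obtain $1/\Omega(\log n)$ accuracy on edge weights'' route is shown to cost $O(\log^3 n)$ samples.

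The paper's mechanism is different and is the crux you are missing: the bias is not allowed to accumulate multiplicatively in the first place. At each internal node $x$ with children $y_1,y_2$, the algorithm first \emph{estimates the accumulated bias factors} $\bbias(y_1),\bbias(y_2)$ of the children's estimators by a four-point-type formula on internal distance estimates (the quantity $\ebias(y_\alpha)$ in Section~\ref{section:recursive}), and only then chooses the combination weights so that the new estimator's bias is re-normalized to $1+O(\eps)$, \emph{independently of how large the children's biases are}. In other words, each level self-corrects: the recursive invariants are $|\bbias(u)-1|<\delta$ and $\var[S_u]\le c$ with a \emph{fixed} constant $\delta$, and these are maintained exactly (Propositions~\ref{proposition:bias} and \ref{proposition:variance}) rather than degraded by $O(\delta)$ per level. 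Because the per-level tolerance is a constant, each level needs only $K=\kappa\log n$ samples for all its concentration events, and with $\log n$ independent blocks the total is $\kappa\log^2 n$. Your Lipschitz-perturbation argument, by contrast, treats the recursion as a black-box passive consumer of noisy parameters, which is exactly the analysis the paper argues cannot beat $\log^3 n$. So the concrete missing idea is the bias-measurement-and-compensation step; without it the $O(\log^2 n)$ claim does not follow from your argument, and the ``technical heart'' you flag at the end is mis-diagnosed: the goal is not to push the per-level error to $O(1/\log n)$ but to keep it at a constant while preventing it from compounding.
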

This result is sharp as we prove the following negative results establishing the equivalence of the TME and HSI thresholds.
\begin{theorem}\label{thm:gaussianNonRecon}
If $0 < f \leq g$ with $g > \critks = \ln \sqrt{2}$,
then the tree structure estimation problem on
$\gmrft^{f,g}$
cannot, in general, be solved without at least $k = n^\gamma$
samples, where $\gamma = \gamma(f,g) > 0$.
\end{theorem}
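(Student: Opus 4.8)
\medskip
\noindent\textbf{Proof idea.}
The plan is to reduce the claim to a two-hypothesis distinguishing problem: we will exhibit, for every $h$, two balanced trees $\phy^1,\phy^2\in\sphy_{2^h}$ with all edge weights in $(f,g)$ but with \emph{distinct leaf-labeled topologies}, whose leaf laws $\law_{\phy^1,\Sigma},\law_{\phy^2,\Sigma}$ satisfy $\delta_n:=\|\law_{\phy^1,\Sigma}-\law_{\phy^2,\Sigma}\|_{\mathrm{TV}}\le n^{-\gamma'}$ for some $\gamma'=\gamma'(f,g)>0$. Granting this, subadditivity of total variation under products gives $\|\law_{\phy^1,\Sigma}^{\otimes k}-\law_{\phy^2,\Sigma}^{\otimes k}\|_{\mathrm{TV}}\le k\,\delta_n$; since any estimator that recovers the topology on $\gmrft^{f,g}$ with error probability below $1/3$ must in particular distinguish $\phy^1$ from $\phy^2$ with advantage at least $1/3$, this forces $k\gtrsim\delta_n^{-1}\ge n^{\gamma'}$, and the theorem follows with any $\gamma<\gamma'$. (We may assume $f<g$, as $\gmrft^{f,g}$ is empty otherwise.)

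\medskip
\noindent\textbf{The two trees.}
Fix $w^\star$ with $\max\{f,\critks\}<w^\star<g$, which is possible because $g>\critks=\ln\sqrt2$. Let $\phy^1$ be the $h$-level balanced tree on $n=2^h$ leaves with every edge of weight $w^\star$; it contains four disjoint depth-$(h-2)$ subtrees hanging off the four depth-$2$ nodes. Pick two of them, with leaf sets $A$ and $B$, whose roots have distinct parents $p_1\neq p_2$ (the two depth-$1$ nodes), and obtain $\phy^2$ from $\phy^1$ by swapping these two subtrees (the $A$-subtree now hangs below $p_2$, the $B$-subtree below $p_1$). All edge weights remain $w^\star\in(f,g)$, and since $A\neq B$ and $p_1\neq p_2$ the trees $\phy^1,\phy^2$ have different leaf-labeled topologies; hence both lie in $\gmrft^{f,g}$. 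Crucially, $\phy^1$ and $\phy^2$ share the same ``skeleton'' obtained by deleting the $A$- and $B$-subtrees, so the joint law $\mu$ of $(\s_{p_1},\s_{p_2},\s_C)$, with $C:=[n]\setminus(A\cup B)$, is common to both models, and the $A$- and $B$-subtrees are identical (same shape, same weights) in both.

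\medskip
\noindent\textbf{Bounding $\delta_n$ via non-reconstruction.}
Because the $A$- and $B$-subtrees are identical, in both models the leaf states on $A$ (resp.\ on $B$) are produced from the state of the attachment node by one and the same Markov kernel $K$ (an edge of weight $w^\star$ followed by a depth-$(h-2)$ Gaussian broadcast):
\begin{align*}
\law_{\phy^1,\Sigma}(\ud\s_A,\ud\s_B,\ud\s_C)&=\int\mu(\ud\s_{p_1},\ud\s_{p_2},\ud\s_C)\,K(\s_{p_1},\ud\s_A)\,K(\s_{p_2},\ud\s_B),\\
\law_{\phy^2,\Sigma}(\ud\s_A,\ud\s_B,\ud\s_C)&=\int\mu(\ud\s_{p_1},\ud\s_{p_2},\ud\s_C)\,K(\s_{p_2},\ud\s_A)\,K(\s_{p_1},\ud\s_B).
\end{align*}
Let $\bar K$ be the unconditional leaf law of such a subtree (the $\mathcal N(0,1)$-average of $K(\cdot,\cdot)$); since every internal node has law $\mathcal N(0,1)$, Pinsker's inequality gives $\expec\|K(\s,\cdot)-\bar K\|_{\mathrm{TV}}\le\sqrt{\tfrac12\,I_h}$ where $I_h:=I(\s_A;\s_{p_1})$. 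Replacing both kernels by $\bar K$ in each display shows that \emph{both} $\law_{\phy^1,\Sigma}$ and $\law_{\phy^2,\Sigma}$ lie within $2\sqrt{\tfrac12 I_h}$ of the common decoupled law $\bar K(\ud\s_A)\otimes\bar K(\ud\s_B)\otimes\mu_C(\ud\s_C)$, so $\delta_n\le4\sqrt{\tfrac12 I_h}$. Finally, by the Markov chain $\s_{p_1}\to\s_r\to\s_A$ ($r$ the $A$-subtree root) and data processing, $I_h\le I(\s_A;\s_r)=-\tfrac12\log\var(\s_r\mid\s_A)$, and this is exactly non-reconstruction above the Kesten--Stigum bound for the Gaussian model: inference on a Gaussian tree is exact, and a one-step computation shows that the explained variance $\eta_s:=1-\var(\text{root}\mid\text{leaves})$ of a depth-$s$ subtree with all weights $w^\star$ obeys $\eta_s=2\rho^2\eta_{s-1}/(1+\rho^2\eta_{s-1})$, $\eta_0=1$, $\rho=e^{-w^\star}$; since $2\rho^2<1\iff w^\star>\critks$, this yields $\eta_s\le(2\rho^2)^s$ and hence $I_h\le I(\s_A;\s_r)=-\tfrac12\log(1-\eta_{h-2})\le(2\rho^2)^{h-2}$ for $n$ large. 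Therefore $\delta_n\le C\,(2\rho^2)^{h/2}=C\,n^{\frac12\log_2(2\rho^2)}=C\,n^{-(w^\star/\ln2-\frac12)}$, so we may take $\gamma'=w^\star/\ln2-\tfrac12>0$.

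\medskip
\noindent\textbf{The crux.}
The only nontrivial ingredient is the quantitative non-reconstruction estimate for the Gaussian model --- controlling $\var(\s_r\mid\s_A)$ well enough to get the \emph{geometric} rate $(2\rho^2)^s$ rather than a bare $o(1)$ --- which is also exactly where the threshold $\critks$ enters. For Gaussians this reduces to the exact scalar recursion above (valid because inference on a Gaussian tree is exact), so it is more transparent than its Ising analogue; the rest (the two-point reduction, the data-processing/Pinsker bookkeeping, the $O(1)$ constants) is routine and does not affect the polynomial-in-$n$ scaling. The same construction, fed with the known non-reconstruction bound for the Ising/GTR models above $\critks$, would give the analogous lower bound in the discrete case.
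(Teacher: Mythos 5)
Your proof is correct, and it reaches the same conclusion by a genuinely different route than the paper's. Both proofs use essentially the same pair of confusable trees (you swap two of the four depth-$2$ subtrees across the two depth-$1$ nodes; the paper swaps the quartet split among the four grandchildren of the root --- the same move). Both also rest on the same quantitative fact: the geometric decay $\E[(\s_0-\E[\s_0\mid\s_{[n]}])^2]=1-\Theta((2\rho^2)^h)$ of the root--leaf correlation above the Kesten--Stigum threshold, which you re-derive from scratch via the exact scalar recursion $\eta_s = 2\rho^2\eta_{s-1}/(1+\rho^2\eta_{s-1})$ (and which is equivalent to the paper's explicit eigenvalue computation of $I_h$ in its Gaussian Solvability theorem). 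Where you diverge is in how this decay is converted into a total-variation bound. The paper observes that $\{\E[\s_u\mid\s_{\desc{u}}]\}_{u\in U}$ is a sufficient statistic for the top-quartet topology and then directly compares two four-dimensional centered Gaussians whose precision matrices differ only in $O((2\rho^2)^h)$ entries, giving $d_{\mathrm{TV}}=O((2\rho^2)^h)$. You instead pass through mutual information and Pinsker's inequality, showing both leaf laws are within $O(\sqrt{I_h})=O((2\rho^2)^{h/2})$ of the common decoupled law $\bar K\otimes\bar K\otimes\mu_C$. Your route is cleaner conceptually and, as you note, is model-agnostic (any non-reconstruction MI bound plugs in), but it costs a square root: you get $\gamma'=w^\star/\ln 2-\tfrac12$, roughly half of the paper's $\gamma'=2\tau/\ln 2-1$. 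Since the theorem only requires \emph{some} $\gamma(f,g)>0$, this loss is immaterial here; the paper's sharper constant is what the Gaussian-specific sufficient-statistic argument buys.
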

The proof of the theorem is in Section~\ref{section:gaussian}.
\begin{theorem}[Main Result: GTR Models]
\label{thm:maingtr}
Let $0 < f < g < +\infty$
and denote by $\gtr_\nstates^{f,g}$ the set of
all $\nstates$-state GTR models
on balanced trees $\phy = (V,E,[n],\rt;\weight)$ satisfying
$f < \weight_e < g,\ \forall e\in E$.
Then, for all $\nstates \geq 2$,
$0 < f < g < \critks = \ln \sqrt{2}$,
the tree structure estimation problem on
$\gtr_\nstates^{f,g}$
can be solved with $k = \kappa \log^2 n$
samples,
where $\kappa = \kappa(\nstates,f,g) > 0$
is
large enough. Moreover all edge weights
are estimated within constant tolerance.
\end{theorem}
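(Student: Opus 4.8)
The plan is to run the same recursive, bottom-up reconstruction scheme as in the Gaussian case (Theorem~\ref{thm:maingaussian}), after reducing the $\nstates$-state GTR model to a scalar setting via the eigenvector transformation $\s_x = \evr_{\ss_x}$ of~\cite{MosselPeres:03}. Since $M^e \evr = e^{-\weight_e}\evr$, under this map one has $\expec[\s_a\s_b] = \prod_{e\in\path(a,b)} e^{-\weight_e}$ (with $\evr$ suitably normalized), so that $\{-\log|\expec[\s_a\s_b]|\}_{a,b\in[n]}$ is exactly the tree metric $\{\weight(a,b)\}$; this multiplicative structure is the feature that lets the Gaussian machinery transfer. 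One first builds, from the $k=\kappa\log^2 n$ i.i.d.\ leaf samples, empirical estimates $\widehat{\weight}(a,b)$ of the short distances (those below a fixed constant multiple of $g$) that are within a small constant tolerance with probability $1-n^{-\Omega(1)}$. Here the $\s_x$ are bounded (unlike in the Gaussian case), so Hoeffding-type concentration suffices; the $\log^2 n$ sample size --- one log factor above the information-theoretic optimum --- is what supplies the robustness margin used below.

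The heart of the argument is the analysis of the robust hidden-state estimator. One uses a recursive linear estimator $\recseq{v}$ of the root value $\s_v$ assembled from the reconstructed states at the children of $v$, with weights that are functions of the \emph{estimated} edge weights $\widehat{\weight}_e$ only. The key claim is a robustness statement: if every $\widehat{\weight}_e$ lies within a sufficiently small constant of $\weight_e$ and $g<\critks$, then $\expec[\recseq{v}\,\s_v]\ge c(\nstates,f,g)>0$ and $\var(\recseq{v})=O(1)$, uniformly over all $\log_2 n$ levels. One proves this by writing the recursion for the pair $\bigl(\expec[\recseq{v}\s_v],\,\var(\recseq{v})\bigr)$ as $v$ moves toward the root and showing that the Kesten--Stigum condition $g<\critks$ makes this recursion contract into a stable region, with the perturbation due to inexact weights entering only as a bounded additive term absorbed by taking $\kappa$ large. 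Relative to the Gaussian case the added difficulty is that $\s_x=\evr_{\ss_x}$ is a \emph{function} of the Markov chain $\ss$ rather than itself Gaussian, so the second-moment recursion involves conditional expectations of products of $\s$'s that must be controlled through the spectral decomposition of $Q$; one also handles the fact that when $\lambda_2$ has multiplicity larger than one (e.g.\ the $\nstates$-state Potts model) $\evr$ is not unique --- but any fixed choice works, since only the eigenvalue $-1$ enters the relevant moments.

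Given accurate reconstructed states at the current forest roots, the combinatorial reconstruction proceeds exactly as in the Gaussian proof: compute pairwise distance estimates among nearby roots by plugging the reconstructed states into the correlation-to-distance formula (their accuracy follows from the lower bound on $\expec[\recseq{v}\s_v]$ together with the sample-size concentration), detect local cherries via the four-point condition, merge each cherry into a new internal node, estimate the two new edge weights within constant tolerance, evaluate $\recseq{}$ at the new node, and detect and remove ``collisions'' (pairs of roots too close to be resolved, or artifacts of reconstruction error) with a collision test that is itself robust to parameter and reconstruction error. One then argues by induction on the level: conditioned on the high-probability event that all short-distance estimates used so far are within tolerance, the reconstructed topology is correct and the invariants --- correct topology, within-tolerance distances, bounded-variance reconstructed states --- are preserved; a union bound over the $O(n)$ merge and collision operations, each failing with probability $n^{-\Omega(1)}$ for $\kappa$ large enough, yields overall success probability $1-o(1)$, and edge weights are read off within constant tolerance from the final distance estimates.

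The main obstacle is the robustness analysis of the second step: controlling how errors in the estimated edge weights propagate through $\log_2 n$ levels of the recursive estimator without the root-state correlation decaying to zero, and carrying this out for a genuinely non-Gaussian scalar process where moment computations rely on the structure of $Q$. The secondary difficulty is making collision detection robust to both parameter and reconstruction error, since a single missed or spurious collision corrupts the topology irrecoverably.
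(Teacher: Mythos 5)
Your proposal captures the overall strategy (eigenvector transformation, recursive linear estimator, quartet-based level-by-level reconstruction), but it misses the two technical obstacles that this theorem is actually about, and the core robustness claim in the middle of your argument is false as stated.

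First, and most seriously: you assert that if the edge-weight estimates $\widehat{\weight}_e$ are within a small constant of $\weight_e$, then the recursive estimator built with weights that are ``functions of the estimated edge weights only'' has conditional bias bounded away from zero and $O(1)$ variance across all $\log_2 n$ levels, with ``the perturbation due to inexact weights entering only as a bounded additive term.'' The perturbation is multiplicative, not additive. If each level introduces a bias factor of $1+O(\eps)$, after $h = \log_2 n$ levels the accumulated bias is of order $(1+O(\eps))^h = n^{\Theta(\eps)}$, which blows up for any constant $\eps$. This is exactly why the paper's introduction says that plugging misspecified edge weights into the flow estimator ``may lead to a highly biased estimate and generally may fail,'' and why the naive approach needs $1/\Omega(\log n)$ edge-weight accuracy, costing $O(\log^3 n)$ samples. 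The paper's fix is not a perturbation bound: it \emph{adaptively measures} the bias term $\bbias(y_\alpha)$ at each internal node (via the four-point-type combination $\ebias(y_\alpha) = \frac12(\edist(y_1,z_{21}) + \edist(y_1,z_{22}) - \edist(z_{21},z_{22}) - 2\eweight_{e_1} - 2\eweight_{e_2})$, whose accuracy does \emph{not} deteriorate with depth) and then chooses the recursion coefficients $\omega_{y_\alpha}$ as a function of \emph{both} $\eweight_{e_\alpha}$ and $\ebbias(y_\alpha)$, so that $\bbias(x) = 1+O(\eps)$ is re-established at every level rather than compounding. Propositions~\ref{proposition:concentrationRobust} and~\ref{proposition:biasRobust} make this precise. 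Without this feedback loop your inductive invariant does not hold.

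Second, your proof does not address the fact that $Q$, and hence the eigenvector $\evr$, is unknown and must itself be estimated (this is the entire content of Section~\ref{section:rate-matrix}). You mention only the multiplicity of $\lambda_2$, which is a minor issue; the serious one is that any empirical estimate $\eeigenvec{2}$ will contain a small component $\alpha_1 \eigenvec{1}$ along the \emph{first} right eigenvector. Since $\eigenvec{1}$ has eigenvalue $0$, its contribution does not decay along edges, while the recursion coefficients $\omega_{y_\alpha}$ are inflating the signal by roughly $e^{\weight_{e_\alpha}}$ per level to keep the $\eigenvec{2}$-bias near $1$. The $\eigenvec{1}$ component therefore grows like $\prod_e e^{\weight_e}$, i.e.\ polynomially in $n$, and swamps the signal after $O(\log n)$ levels unless $|\alpha_1|$ is polynomially small --- which cannot be achieved with $O(\log^2 n)$ samples. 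The paper neutralizes this by exploiting $\eigenvec{1}\equiv \mathbf{1}$: one subtracts the empirical mean $\bar S_u$ from $S_u$ before forming the internal distance estimates, killing the $\eigenvec{1}$-direction exactly (Proposition~\ref{proposition:conMean} and equation~\eqref{e:empiricalMeanSub}). Your proposal has no mechanism for this.

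A smaller point: the collision-detection machinery you describe belongs to the general-tree combinatorial algorithm (as in~\cite{DaMoRo:11a}); here the trees are balanced dyadic, so the topology step is a straightforward level-synchronous cherry identification (Figure~\ref{fig:algo}), and no collision handling is required. Also, the moment control in the GTR case is not a simple second-moment computation: because $\s_x = \evr_{\ss_x}$ is bounded but the recursive estimator $S_x$ is not, one needs the exponential-moment condition $\Gamma_x^i(\zeta) \le \zeta\E[S_x\mid\ss_x=i] + c\zeta^2$, propagated via Lemma~\ref{lemma:stepRobust}, rather than Hoeffding-type bounds on the raw leaf values.
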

The proof of this theorem is similar to that
of Theorem~\ref{thm:maingaussian}.
However dealing with unknown rate matrices
requires some care
and the full proof of the modified algorithm in that case
can be found in Section~\ref{section:gtr}.
\begin{remark}
Our techniques extend to $d$-ary trees
for general (constant) $d \geq 2$. In that
case, the critical threshold satisfies
$d e^{-2\weight} = 1$. We leave the details
to the reader.
\end{remark}

\subsection{Proof Overview}\label{section:sketch}

We give a sketch of the proof of our main result.
We discuss the case of GTR models with known
$Q$ matrix. The unknown $Q$ matrix and
Gaussian cases are similar. See Sections~\ref{section:gaussian} and \ref{section:gtr} for details.
Let $(Z_{[n]}^i)_{i=1}^k$ be i.i.d.~samples
from a GTR model on a balanced tree with
$n$ leaves. Let $(Z_V)$ be a generic sample
from the GTR model.

\paragraph{Boosted algorithm}
As a starting point, our algorithm uses the reconstruction framework of \cite{Mossel:04a}.
This basic ``boosting'' approach is twofold:
\begin{itemize}
\item {\em Initial Step.} Build the first level of the tree from the samples at the leaves.
This can be done easily
by standard quartet-based techniques. (See Section~\ref{section:topology}.)
\item {\em Main Loop.} Repeat the following two steps until the tree is built:
\begin{enumerate}
\item {\em HSI.} Infer hidden states at the roots of the reconstructed subtrees.
\item {\em One-level TME.} Use the hidden state estimates from the previous step to build the next level of the tree
using quartet-based techniques.
\end{enumerate}
\end{itemize}
The heart of the procedure is Step 1. Note that, assuming each level is correctly
reconstructed, the HSI problem in Step 1 is performed on a known, correct topology.
However the edge weights are unknown and
need to be estimated from the samples at the leaves.

This leads to the key technical issue addressed
in this paper. Although HSI with known topology
and edge weights is well understood (at least
in the so-called Kesten-Stigum (KS) regime~\cite{MosselPeres:03}), little work has considered the effect of inexact
parameters on hidden state estimation, with the
notable exception
of~\cite{Mossel:04a} where a parameter-free estimator
is developed for the Ising model.
The issue was averted in prior work on GTR models
by assuming that edge weights are discretized,
allowing exact estimation~\cite{DaMoRo:11a,Roch:10}.

Quartet-based tree structure and edge weight
estimation relies on the following distance estimator.
It is natural to use a distance estimator involving
the eigenvectors of $Q$.
Let $\evr$ be a second right eigenvector of the
GTR matrix $Q$ corresponding to the eigenvalue $-1$.
For $a \in V$ and $i=1,\ldots,k$,
map the samples to the real line
by defining
$\s^i_{a} = \evr_{\ss^i_a}$.
Then define
\begin{equation}\label{eq:eigenestim2}
\hat\weight(a,b) = -\ln \left(\frac{1}{k}\sum_{i=1}^k \s^i_a \s^i_b\right).
\end{equation}
It can be shown that:
For all $a,b\in V$, we have
$-\ln\expec[e^{-\hat\weight(a,b)}] = \weight(a,b)$.
Note that, in our case, this estimate is only available for pairs of {\em leaves}. Moreover,
it is known that the quality of this estimate degrades quickly as $\weight(a,b)$
increases~\cite{ErStSzWa:99a,Atteson:99}.
To obtain accuracy $\eps$ on a $\weight$ distance
with inverse polynomial failure probability
requires
\begin{equation}\label{eq:slr}
k \geq C_1 \eps^{-2} e^{C_2 \weight} \log n
\end{equation}
samples,
where $C_1, C_2$ are constants.
We use HSI to replace the $X$'s in
(\ref{eq:eigenestim2}) with approximations of
hidden states in order to improve the accuracy
of the distance estimator between {\em internal}
nodes.

\paragraph{Weighted majority}
For the symmetric CFN model with state space $\{+1,-1\}$,
hidden states can be inferred using a linear combination of the states at the leaves---a type of weighted majority vote.
A natural generalization of this linear estimator in the context of more general mutation matrices was studied by~\cite{MosselPeres:03}.
The estimator at the root $\rt$ considered in~\cite{MosselPeres:03} is of the form
\begin{equation}\label{eq:flow}
S_\rt = \sum_{x \in [n]} \left(\frac{\Psi(x)}{e^{-\weight(\rt,x)}}\right) \s_x,
\end{equation}
where $\Psi$ is a unit flow between $\rt$ and $[n]$.
For any such $\Psi$, $S_\rt$ is a conditionally unbiased estimator of $\s_\rt$, that is,
$\expec[S_\rt\,|\,\s_\rt] = \s_\rt$.
Moreover, in the KS regime, that is, when $\weightmax < \critks$,
one can choose a flow such that the variance of $S_\rt$ is uniformly bounded~\cite{MosselPeres:03} and, in fact,
we have the following stronger moment condition
\begin{equation*}
\expec[\exp(\zeta S_\rt)|\s_\rt]
\leq \exp(\zeta \s_\rt + c\zeta^2)
\end{equation*}
for all $\zeta \in \mathbb{R}$~\cite{PeresRoch:11}.
In~\cite{Roch:10} this estimator was used in Step 1 of the boosted algorithm.
On a balanced tree with $\log n$ levels,
obtaining sufficiently accurate estimates
of the coefficients in (\ref{eq:flow}) requires accuracy
$1/\Omega(\log (n))$ on the edge weights.
By (\ref{eq:slr}),
such accuracy requires a $O(\log^3 n)$ sequence
length.
Using misspecified edge
weights in (\ref{eq:flow}) may lead to a
highly biased estimate and generally may fail to give a good reconstruction at the root.
Here we achieve accurate hidden state estimation
using only $O(\log^2 n)$ samples.

\paragraph{Recursive estimator}
We propose to construct an estimator of the form (\ref{eq:flow}) \emph{recursively}.
For $x\in V$ with children $y_1, y_2$ we let
\begin{equation}\label{eq:recursive}
S_v = \omega_{y_1} S_{y_1} + \omega_{y_2} S_{y_2},
\end{equation}
and choose the coefficients $\omega_{y_1}, \omega_{y_2}$
to guarantee the following conditions:
\begin{itemize}
\item We have
$$
\expec[S_x \,|\,\ss_x] = \bbias(x)\s_x,
$$
with a bias term $\bbias(x)$ close to 1.
\item  The estimator satisfies the exponential moment condition
$$
\expec[\exp(\zeta S_x)|\ss_x] \leq \exp(\zeta \s_x + c\zeta^2).
$$
\end{itemize}
We show that these conditions can be guaranteed provided the model is in the KS regime.
To do so, the procedure measures the bias terms $\bbias(y_1)$ and $\bbias(y_2)$ using methods similar to distance estimation.
By testing the bias and, if necessary, compensating for any previously introduced error, we can adaptively choose
coefficients $\omega_1,\omega_2$ so that $S_x$ satisfies these two conditions.

\paragraph{Unknown rate matrix}
Further complications arise when the matrix $Q$ is not given and has to be estimated from the data.
We give a procedure for recovering $Q$ and
an estimate of its second right eigenvector.
Problematically, any estimate $\hat\evr$ of $\evr$ may have a small
component in the direction of the first right eigenvector of $Q$.
Since the latter has eigenvalue $0$, its component builds up
over many recursions and it eventually
overwhelms the signal.  However, we make use of the fact that the first right eigenvector is identically 1:
by subtracting from $S_x$ its empirical mean, we show that we can cancel the effect of the first eigenvector.
With a careful analysis, this improved procedure leads to an accurate estimator.

\section{Gaussian Model}
\label{section:gaussian}

In this section, we prove our main theorem
in the Gaussian case.
The proof is based on a
new hidden state estimator
which is described in Section~\ref{section:recursive}.
For $n = 2^h$ with$h \geq 0$,
let $\phy = (V,E,[n],\rt;\weight)$ be a balanced tree.
We assume that
$0 \leq \weight_e < g,\ \forall e\in E$,
with $ 0 < g < \critks = \ln \sqrt{2}$.
The significance of the threshold
$\critks$ is explained in Section~\ref{sec:ThmNonrecon} where we also prove Theorem~\ref{thm:gaussianNonRecon}.
We generate $k$ i.i.d.~samples
$(\s^i_{[n]})_{i=1}^k$ from the GMRFT model
$\law_{\phy, \Sigma}$ where $k = \kappa \log^2 n$.

Our construction is recursive, building the tree
and estimating hidden states one level at a time.
To avoid unwanted correlations, we use a fresh
block of samples for each level.
Let $K = \kappa \log n$ be the size of each block.

\subsection{Recursive Linear Estimator}
\label{section:recursive}

The main tool in our reconstruction algorithm
is a new hidden state estimator. This estimator
is recursive, that is, for a node
$x \in V$ it is constructed from estimators
for its children $y, z$. In this subsection,
we let $\s_V$ be a generic sample from
the GMRFT independent of
everything else. We let $(\s_{[n]}^i)_{i=1}^K$
be a block of independent samples at the leaves.
For a node $u \in V$, we let $\desc{u}$ be the
leaves below $u$ and $\sb{u}$, the corresponding
state.

\paragraph{Linear estimator}
We build a {\em linear} estimator for each of the vertices recursively from the leaves.
Let $x\in V-[n]$ with children (direct descendants)
$y_1,y_2$.
Assume that the topology of the tree rooted at $x$
has been correctly reconstructed, as detailed
in Section~\ref{section:topology}.
Assume further that we have constructed linear
estimators
$$S_u \equiv \lin_u(\sb{u})$$
of $\s_u$, for all $u \in V$ below $x$.
We use the convention that $\lin_u(\sb{u}) = \s_u$
if $u$ is a leaf.
We let $\lin_x$ be a linear
combination of the form
\begin{equation}\label{eq:linear}
S_x \equiv \lin_x(\sb{x}) =
\omega_{y_1} \lin_{y_1}(\sb{y_1}) +
\omega_{y_2} \lin_{y_2}(\sb{y_2}),
\end{equation}
where---ideally---the $\omega$'s are chosen so as to satisfy the following conditions:
\begin{enumerate}
\item {\bf Unbiasedness.} The estimator
$S_x = \lin_x(\sb{x})$ is {\em conditionally unbiased}, that is,
$$
\expec[S_x\,|\,\s_x] = \s_x.
$$

\item {\bf Minimum Variance.} The estimator
has minimum variance amongst
all estimators of the form (\ref{eq:linear}).
\end{enumerate}
An estimator with these properties
can be constructed given exact knowledge of the
edge parameters, see Section~\ref{sec:ThmNonrecon}.
However, since the edge parameters
can only be estimated with constant accuracy
given the samples, we need a procedure that
satisfies these conditions only approximately.
We achieve this
by 1) recursively minimizing the variance at each level
and 2) at the same time measuring the bias and adjusting for any deviation that may have
accumulated from previously estimated branch lengths.

\paragraph{Setup}
We describe the basic recursive step of our
construction.
As above, let $x\in V-[n]$ with children $y_1,y_2$ and corresponding edges $e_1 = (x,y_1), e_2=(x,y_2)$.
Let $0 < \delta < 1$ (small) and $c > 1$ (big) be constants to be defined later.
Assume that we have the following:
\begin{itemize}
\item Estimated edge weights
$\eweight_{e}$ for all edges $e$ below $x$
such that there is $\eps > 0$
with
\begin{equation}\label{eq:weightassumption}
|\eweight_{e} - \weight_{e}| < \eps.
\end{equation}
The choice of $\eps$ and the procedure to obtain these estimates are described in Section~\ref{section:weights}.
We let $\hat\rho_{e} = e^{-\eweight_{e}}$.

\item Linear estimators $\lin_{u}$
for all $u\in V$ below $x$ such that
with
\begin{equation}\label{eq:biasguarantee}
\expec[S_{u} \,|\, \s_{u}] = \bbias(u) \s_{u},
\end{equation}
where $S_u \equiv \lin_u(\sb{u})$,
for some $\bbias(u) > 0$ with $|\bbias(u) - 1| < \delta$
and
\begin{equation}\label{eq:varguarantee}
\vvar(u)
\equiv \var[S_u]
\leq c.
\end{equation}
Note that these conditions are satisfied at the leaves.
Indeed, for $u \in [n]$ one has
$S_{u} = \s_{u}$
and therefore
$\expec[S_{u}\,|\,\s_{u}] = \s_{u}$
and
$\vvar(u) = \var[X_u] = 1$.
We denote $\beta(u) = -\ln \bbias(u)$.

\end{itemize}
We now seek to construct $S_x$ so that it in turn satisfies the same conditions.
\begin{remark}
In this subsection, we are treating the estimated
edge weights and linear estimator coefficients
as deterministic. In fact, they are random variables
depending on sample blocks used on prior recurrence
levels---and in particular they are independent of
$X_V$ and of
the block of samples used on the current level.
\end{remark}

\paragraph{Procedure}
Given the previous setup, we choose the weights $\omega_{y_\alpha}$, $\alpha=1,2$, as follows.
For $u,v \in V$ below $x$ and
$\ell =1,\ldots,K$ let
$$
S_u^\ell \equiv \lin_u(\sb{u}^\ell),
$$
and define
\begin{equation*}
\edist(u,v) = -\ln\left(\frac{1}{K} \sum_{\ell=1}^K S^\ell_{u}S^\ell_{v}\right),
\end{equation*}
the estimated path length between $u$ and $v$ including bias.
We let $\bias(u) = -\ln \bbias(u)$.
\begin{enumerate}
\item {\bf Estimating the Biases.}
If $y_1, y_2$ are leaves, we let $\ebias(y_\alpha) = 0$, $\alpha=1,2$.
Otherwise, let $z_{21},z_{22}$ be the children of $y_2$. We then compute
\begin{equation*}
\ebias(y_1) = \frac{1}{2}(\edist(y_1, z_{21}) + \edist(y_1, z_{22}) - \edist(z_{21}, z_{22}) - 2\eweight_{e_1} - 2\eweight{e_2}),
\end{equation*}
and similarly for $y_2$.
Let $\ebbias(y_\alpha) = e^{-\ebias(y_\alpha)}$, $\alpha=1,2$.

\item {\bf Minimizing the Variance.}
For $\alpha=1,2$ we set $\omega_{y_1}, \omega_{y_2}$ as
\begin{equation}\label{eq:solution}
\omega_{y_\alpha} = \frac{\ebbias(y_\alpha)\erho_{e_\alpha}}{\ebbias(y_1)^2\erho_{e_1}^2 + \ebbias(y_2)^2\erho_{e_2}^2},
\end{equation}
which corresponds to the solution of the following optimization problem:
\begin{equation}\label{eq:optimization}
\min\{\omega_{y_1}^2 + \omega_{y_2}^2 \ :\ \omega_{y_1} \ebbias(y_1) \erho_{e_1} +  \omega_{y_2} \ebbias(y_2) \erho_{e_2} = 1,\ \omega_{y_1},\omega_{y_2} > 0 \}.
\end{equation}
The constraint in the optimization above is meant to
ensure that the bias condition (\ref{eq:biasguarantee}) is satisfied. We set
\begin{equation*}
\lin_x(\sb{x}) = \omega_{y_1} \lin_{y_1}(\sb{y_1})
+ \omega_{y_2} \lin_{y_2}(\sb{y_2}).
\end{equation*}

\end{enumerate}

\paragraph{Bias and Variance}
We now prove (\ref{eq:biasguarantee}) and (\ref{eq:varguarantee}) recursively assuming
(\ref{eq:weightassumption}) is satisfied.
This follows from the following propositions.
\begin{proposition}[Concentration of Internal Distance Estimates]\label{proposition:concentration}
For all $\eps > 0$, $\gamma > 0$, $0 < \delta < 1$ and
$c > 0$, there is $\kappa = \kappa(\eps,\gamma,\delta,c) > 0$ such that,
with probability at least $1 - O(n^{-\gamma})$, we have
\begin{equation*}
|\edist(u,v) - (\weight(u,v) + \bias(u) + \bias(v))| < \eps,
\end{equation*}
for all $u,v \in \{y_1,y_2,z_{11},z_{12},z_{21},z_{22}\}$ where $z_{\alpha 1}, z_{\alpha 2}$ are the children
of $y_\alpha$.
\end{proposition}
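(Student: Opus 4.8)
The plan is to express $\edist(u,v)$ as a normalized sum of i.i.d.\ terms over the $K$ sample blocks and then apply a concentration inequality, so the heart of the matter is twofold: first, identifying the mean of the inner sum, and second, controlling the tails of the summands $S_u^\ell S_v^\ell$. For the mean, recall that $S_u = \lin_u(\sb{u})$ is a linear estimator built from the leaves below $u$, and by the recursively maintained bias condition \eqref{eq:biasguarantee} we have $\expec[S_u \mid \s_u] = \bbias(u)\s_u$. Since in the GMRFT the states $\s_u$ and $\s_v$ satisfy $\expec[\s_u \s_v] = \Sigma_{uv} = e^{-\weight(u,v)}$ whenever $u$ lies on one side and $v$ on the other of the relevant separating node (which holds for all the pairs listed, since $u,v$ range over $\{y_1,y_2\}$ and grandchildren), conditioning on $(\s_u,\s_v)$ and using independence of the two subtree-estimators given their roots gives
\begin{equation*}
\expec[S_u S_v] = \expec\big[\expec[S_u\mid\s_u]\,\expec[S_v\mid\s_v]\big] = \bbias(u)\bbias(v)\,\expec[\s_u\s_v] = e^{-(\weight(u,v) + \bias(u) + \bias(v))},
\end{equation*}
using $\bias(u) = -\ln\bbias(u)$. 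Thus $-\ln\expec[\tfrac1K\sum_\ell S_u^\ell S_v^\ell] = \weight(u,v)+\bias(u)+\bias(v)$ exactly; one small subtlety is that a couple of the listed pairs (e.g.\ $y_1$ and a child of $y_1$, or $y_1$ and $y_2$) are in an ancestor--descendant or sibling relationship, but the globally Markov property and the multiplicativity of $\Sigma$ along paths handle those cases identically.

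Next comes the concentration step. Conditioned on the edge-weight and coefficient estimates from earlier blocks (which, per the Remark, are independent of the current block), the variables $S_u^\ell$ are linear combinations of leaf Gaussians and hence jointly Gaussian with variance $\vvar(u) \le c$ by \eqref{eq:varguarantee}. The products $S_u^\ell S_v^\ell$ are therefore sub-exponential with parameters controlled by $c$ (a product of two Gaussians with bounded variances has exponential tails), so a Bernstein-type inequality gives
\begin{equation*}
\prob\!\left(\left|\frac1K\sum_{\ell=1}^K S_u^\ell S_v^\ell - \expec[S_u S_v]\right| > t\right) \le 2\exp(-c' K \min\{t^2, t\})
\end{equation*}
for a constant $c'$ depending on $c$. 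Since $\expec[S_u S_v] = e^{-(\weight(u,v)+\bias(u)+\bias(v))}$ is bounded below by a positive constant (because $\weight(u,v) \le 3g$ across these nearby pairs, $|\bbias(\cdot)-1|<\delta$, and the number of levels between the pairs is bounded), a deviation of size $t$ in the inner sum translates, via the Lipschitz continuity of $-\ln(\cdot)$ on a neighborhood bounded away from $0$, into a deviation of size $O(t)$ in $\edist(u,v)$. Choosing $t$ a small constant multiple of $\eps$ and $K = \kappa \log n$ with $\kappa = \kappa(\eps,\gamma,\delta,c)$ large enough makes the failure probability $O(n^{-\gamma-1})$ for each pair; a union bound over the $O(1)$ pairs among $\{y_1,y_2,z_{11},z_{12},z_{21},z_{22}\}$ yields the claimed $O(n^{-\gamma})$ bound (in fact the extra $\log n$ and the constant number of internal nodes processed per level are absorbed in the final union bound over all $n$ nodes).

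I expect the main obstacle to be bookkeeping the lower bound on $\expec[S_u S_v]$ carefully enough that the $-\ln$ step does not blow up the error: one must check that all six nodes are close in the tree (distance $\le 3$ edges, hence $\weight(u,v) < 3g < 3\ln\sqrt2$), so that $\expec[S_u S_v]$ stays above an absolute positive constant uniformly, making $-\ln$ uniformly Lipschitz on the relevant range. A secondary point requiring care is verifying that $S_u^\ell S_v^\ell$ genuinely has sub-exponential (not merely finite-variance) tails so that Bernstein applies with only $\log n$ samples rather than a polynomial number — this is exactly where Gaussianity of the $S_u$'s (inherited from the leaves through linear combinations) is used, and it is the analogue of the exponential-moment condition that in the GTR case must be established separately.
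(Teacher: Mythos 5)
Your proposal is correct and follows essentially the same route as the paper: compute the mean $\expec[S_u S_v] = \bbias(u)\bbias(v)\Sigma_{uv}$, exploit the Gaussianity and uniformly bounded variance of $S_u$ to get sub-exponential tails for the products, apply a large-deviation bound, and transfer to $\edist$ via the Lipschitz property of $-\ln$ on a range bounded away from zero. The only cosmetic differences are that the paper makes the sub-exponential step explicit via the polarization identity $S_u S_v = \tfrac12[(S_u+S_v)^2 - S_u^2 - S_v^2]$ and $\chi^2$ moment-generating functions, and your ``$\le 3$ edges, $\weight(u,v) < 3g$'' should read $\le 4$ edges and $4g$ (e.g.\ $z_{11}$ to $z_{22}$), which is immaterial to the conclusion.
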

\begin{proof}
First note that
\begin{eqnarray*}
\E\left[
\frac{1}{K} \sum_{\ell=1}^K S^\ell_{u}S^\ell_{v}
\right]
&=& \E\left[
S_{u}S_{v}
\right]\\
&=& \E\left[\E\left[
S_{u}S_{v}|\s_u,\s_v
\right]\right]\\
&=& \E\left[\E\left[
S_{u}|\s_u
\right]\E\left[
S_{v}|\s_v
\right]\right]\\
&=& \E\left[\bbias(u)\bbias(v)\s_u\s_v\right]\\
&=& \bbias(u)\bbias(v)\Sigma_{uv},
\end{eqnarray*}
where we used the Markov property on the third line,
so that
$$
-\ln\left(\E\left[
\frac{1}{K} \sum_{\ell=1}^K S^\ell_{u}S^\ell_{v}
\right]\right)
= \weight(u,v) + \bias(u) + \bias(v).
$$
Moreover, by assumption, $S_u$ is Gaussian
with
$$
\E[S_u] = 0, \quad \var[S_u] = \vvar(u) \leq c,
$$
and similarly for $u$.
It is well-known that in the Gaussian case
empirical covariance estimates as above
have $\chi^2$-type distributions~\cite{Anderson:58}.
Explicitly, note that from
$$
S_u S_v = \frac{1}{2}[(S_u + S_v)^2
- S_u^2 - S_v^2],
$$
it suffices to consider the concentration of
$S_u^2$, $S_v^2$, and $(S_u+S_v)^2$.
Note that
$$
\var[S_u + S_v] = \vvar(u) + \vvar(v) + 2\bbias(u)\bbias(v)\Sigma_{uv} \leq 2c + 2(1+\delta)^2 < +\infty,
$$
independently of $n$.
We argue about $S_u^2$, the other
terms being similar.
By definition, $S_u^2/\vvar(u)$ has a $\chi^2_1$
distribution so that
\begin{equation}\label{e:GaussianSecondMoment}
\E\left[e^{\zeta S_u^2}\right]
= \frac{1}{\sqrt{1 - 2\zeta\vvar(u)}} < +\infty,
\end{equation}
for $|\zeta|$ small enough,
independently of $n$. The proposition
then follows from standard large-deviation
bounds~\cite{Durrett:96}.
\end{proof}

\begin{proposition}[Recursive Linear Estimator: Bias]\label{proposition:bias}
For all $\delta > 0$,
there is $\eps > 0$ small enough so that,
assuming that Proposition~\ref{proposition:concentration}
holds,
$$
\expec[S_{x} \,|\, \s_{x}] = \bbias(x) \s_{x},
$$
for some $\bbias(x) > 0$ with $|\bbias(x) - 1| < \delta$.
\end{proposition}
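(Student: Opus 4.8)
The plan is to track the bias exactly through one recursive step and show that it stays within $\delta$ of $1$ provided $\eps$ is small. First I would compute $\expec[S_x \mid \s_x]$ directly from the definition \eqref{eq:linear}. Conditioning on $\s_x$ and using the Markov property (the subtrees rooted at $y_1$ and $y_2$ are conditionally independent given $\s_x$, and $S_{y_\alpha}$ depends only on $\sb{y_\alpha}$), one has
\begin{equation*}
\expec[S_x \mid \s_x] = \omega_{y_1}\expec[S_{y_1}\mid\s_x] + \omega_{y_2}\expec[S_{y_2}\mid\s_x].
\end{equation*}
For each $\alpha$, a further conditioning on $\s_{y_\alpha}$ gives $\expec[S_{y_\alpha}\mid\s_x] = \expec[\bbias(y_\alpha)\s_{y_\alpha}\mid\s_x] = \bbias(y_\alpha)\rho_{e_\alpha}\s_x$, using \eqref{eq:biasguarantee} and the fact that $\expec[\s_{y_\alpha}\mid\s_x] = \rho_{e_\alpha}\s_x = e^{-\weight_{e_\alpha}}\s_x$ in the GMRFT. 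Hence $\expec[S_x\mid\s_x] = \bbias(x)\s_x$ with
\begin{equation*}
\bbias(x) = \omega_{y_1}\bbias(y_1)\rho_{e_1} + \omega_{y_2}\bbias(y_2)\rho_{e_2}.
\end{equation*}
This identifies $\bbias(x)$ cleanly; the content of the proposition is that this quantity is close to $1$.

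Next I would compare $\bbias(x)$ with the constraint value in the optimization \eqref{eq:optimization}, which forces $\omega_{y_1}\ebbias(y_1)\erho_{e_1} + \omega_{y_2}\ebbias(y_2)\erho_{e_2} = 1$. Subtracting, $\bbias(x) - 1 = \sum_{\alpha}\omega_{y_\alpha}\bigl(\bbias(y_\alpha)\rho_{e_\alpha} - \ebbias(y_\alpha)\erho_{e_\alpha}\bigr)$. So I need two ingredients: (i) the estimated quantities $\ebbias(y_\alpha)\erho_{e_\alpha}$ are close to the true $\bbias(y_\alpha)\rho_{e_\alpha}$, and (ii) the coefficients $\omega_{y_\alpha}$ from \eqref{eq:solution} are bounded. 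For (i), $\erho_{e_\alpha} = e^{-\eweight_{e_\alpha}}$ is within a factor $e^{\pm\eps}$ of $\rho_{e_\alpha}$ by \eqref{eq:weightassumption}; and $\ebias(y_\alpha)$ is built from the $\edist$ quantities via the stated four-point formula, which by Proposition~\ref{proposition:concentration} (together with the $\eweight$-accuracy \eqref{eq:weightassumption}) approximates $\bias(y_\alpha) = -\ln\bbias(y_\alpha)$ to within $O(\eps)$, so $\ebbias(y_\alpha)$ is within a factor $e^{O(\eps)}$ of $\bbias(y_\alpha)$. Combining, $|\bbias(y_\alpha)\rho_{e_\alpha} - \ebbias(y_\alpha)\erho_{e_\alpha}| = O(\eps)$, using that $\bbias(y_\alpha)\le 1+\delta$ and $\rho_{e_\alpha}\le 1$. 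For (ii), from \eqref{eq:solution} and the fact that the denominator $\ebbias(y_1)^2\erho_{e_1}^2 + \ebbias(y_2)^2\erho_{e_2}^2$ is bounded below — each $\ebbias(y_\alpha)$ is bounded away from $0$ (since $\bbias(y_\alpha)>1-\delta$ and the estimate is within $e^{O(\eps)}$), and $\erho_{e_\alpha} \ge e^{-g-\eps}$ since $\weight_e < g$ — one gets $\omega_{y_\alpha} \le C(g,\delta)$, a constant independent of $n$.

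Putting these together, $|\bbias(x) - 1| \le \sum_\alpha \omega_{y_\alpha}\cdot O(\eps) \le C'(g,\delta)\,\eps$, so choosing $\eps$ small enough (depending only on $\delta$ and $g$) yields $|\bbias(x)-1|<\delta$, closing the recursion. I expect the main obstacle to be ingredient (i) — carefully propagating the two separate sources of error (the $\eps$-error in the edge weights $\eweight$ and the $\eps$-error from Proposition~\ref{proposition:concentration} in the $\edist$ estimates) through the four-point formula for $\ebias(y_\alpha)$, since the formula involves a signed combination of six estimated quantities and one must check the errors do not compound across recursion levels. The key point that saves the argument is that the recursion for $\bbias$ is genuinely contractive toward $1$: the constraint in \eqref{eq:optimization} pins the estimated product to exactly $1$, so the only slack is the $O(\eps)$ discrepancy between estimated and true parameters at the current level, which does not accumulate. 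One should also record the (trivial) base case: at leaves $\bbias(u)=1$ exactly, as already noted after \eqref{eq:varguarantee}.
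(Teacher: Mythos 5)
Your proposal is correct and follows essentially the same path as the paper's proof: compute $\bbias(x)=\omega_{y_1}\bbias(y_1)\rho_{e_1}+\omega_{y_2}\bbias(y_2)\rho_{e_2}$ via the Markov property, show $\ebias(y_\alpha)$ tracks $\bias(y_\alpha)$ to $O(\eps)$ using Proposition~\ref{proposition:concentration} plus \eqref{eq:weightassumption} and the cancellation built into the four-point formula, show $\omega_{y_\alpha}=O(1)$, and compare against the constraint in \eqref{eq:optimization}. The only presentational difference is that you subtract the constraint identity to isolate $\bbias(x)-1$ explicitly, whereas the paper writes an inequality chain and tracks the explicit constant $\tfrac{7}{2}\eps$ in $|\ebias(y_\alpha)-\bias(y_\alpha)|$; your remark that the constraint "pins the estimated product to exactly $1$" so the error does not accumulate across levels is precisely the point the paper relies on.
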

\begin{proof}
We first show that the conditional biases at $y_1, y_2$ are accurately estimated. From Proposition~\ref{proposition:concentration},
we have
\begin{equation*}
|\edist(z_{2 1},z_{2 2})
- (\weight(z_{2 1},z_{2 2})
+ \bias(z_{2 1}) + \bias(z_{2 2}))|
< \eps,
\end{equation*}
and similarly for $\edist(y_1,z_{21})$ and $\edist(y_1,z_{22})$. Then from (\ref{eq:weightassumption}), we get
\begin{eqnarray*}
2\ebias(y_1)
&=&
\edist(y_1, z_{21}) + \edist(y_1, z_{22})
- \edist(z_{21}, z_{22})
- 2\eweight_{e_1} - 2\eweight_{e_2}\\
&\leq& (\weight(y_1, z_{21}) + \bias(y_1) + \bias(z_{21}))
+ (\weight(y_1, z_{22}) + \bias(y_1) + \bias(z_{22}))\\
&& \qquad - (\weight(z_{21}, z_{22}) + \bias(z_{21}) + \bias(z_{22}))
- 2\weight_{e_1} - 2\weight_{e_2} + 7\eps\\
&=& 2\bias(y_1) + (\weight(y_1, z_{21}) + \weight(y_1, z_{22})
- \weight(z_{21}, z_{22})) - 2(\weight_{e_1} + \weight_{e_2}) + 7\eps\\
&=& 2\bias(y_1) + ([\weight(y_1,y_2) + \weight(y_2,z_{21})]
+ [\weight(y_1,y_2) + \weight(y_2,z_{22})]\\
&&\qquad  - [\weight(z_{21},y_2) + \weight(y_2,z_{22})])
- 2\weight(y_1,y_2) + 7\eps\\
&=& 2\bias(y_1) + 7\eps,
\end{eqnarray*}
where we used the additivity of $\weight$ on line 4. And similarly for the other direction so that
\begin{equation*}
|\ebias(y_1) - \bias(y_1)| \leq \frac{7}{2}\eps.
\end{equation*}
The same inequality holds for $y_2$.

Given $\omega_{y_1}, \omega_{y_2}$, the bias at $x$ is
\begin{eqnarray*}
\expec[S_{x} \,|\, \s_{x}]
&=& \expec[\omega_{y_1}S_{y_1} + \omega_{y_2}S_{y_2} \,|\, \s_{x}]\\
&=& \sum_{\alpha=1,2} \omega_{y_\alpha}
\E[\expec[S_{y_\alpha} \,|\, \s_{y_\alpha},\s_x]|\s_x]\\
&=& \sum_{\alpha=1,2} \omega_{y_\alpha}
\E[\expec[S_{y_\alpha} \,|\, \s_{y_\alpha}]|\s_x]\\
&=& \sum_{\alpha=1,2} \omega_{y_\alpha}
\E[\bbias(y_\alpha)\s_{y_\alpha}|\s_x]\\
&=& (\omega_{y_1} \bbias(y_1) \rho_{e_1} +  \omega_{y_2} \bbias(y_2) \rho_{e_2})\s_x\\
&\equiv& \bbias(x) \s_x,
\end{eqnarray*}
where we used the Markov property on line 2 and the fact that $\s_V$ is Gaussian on line 5. The last line
is a definition.
Note that by the inequality above we have
\begin{eqnarray*}
\bbias(x)
&=& \omega_{y_1} \bbias(y_1) \rho_{e_1} +  \omega_{y_2} \bbias(y_2) \rho_{e_2}\\
&=& \omega_{y_1} e^{-\bias(y_1)} \rho_{e_1} +  \omega_{y_2} e^{-\bias(y_2)} \rho_{e_2}\\
&\leq& \omega_{y_1} e^{-\ebias(y_1) +7/2\eps} (\erho_{e_1} + \eps) +  \omega_{y_2} e^{-\ebias(y_2)+7/2\eps} (\erho_{e_2} + \eps)\\
&=& (\omega_{y_1} \ebbias(y_1) \erho_{e_1} +  \omega_{y_2} \ebbias(y_2) \erho_{e_2}) + \max\{\omega_{y_1}, \omega_{y_2}\}O(\eps)\\
&=& 1 + \max\{\omega_{y_1}, \omega_{y_2}\}O(\eps),
\end{eqnarray*}
where the last line follows from the definition
of $\omega_{y_\alpha}$.
Taking $\eps, \delta$ small enough,
from our previous bounds and equation \eqref{eq:solution},
we can derive that $\omega_{y_\alpha} = O(1)$, $\alpha=1,2$. In particular,
$\bbias(x) = 1+O(\eps)$ and, choosing $\eps$ small enough, it satisfies $|\bbias(x) - 1| < \delta$.
\end{proof}

\begin{proposition}[Recursive Linear Estimator: Variance]\label{proposition:variance}
There exists $c > 0$ large enough
and $\eps, \delta > 0$ small enough
such that,
assuming that Proposition~\ref{proposition:concentration}
holds, we have
$$
\vvar(x) \equiv \var[S_x] \leq c.
$$
\end{proposition}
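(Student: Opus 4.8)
The plan is to compute $\var[S_x]$ in closed form from the second moments of the two children's estimators, and then use the Kesten--Stigum inequality $2e^{-2g}>1$ to show the variance cannot blow up. Expanding \eqref{eq:linear},
\[
\var[S_x] = \omega_{y_1}^2\,\vvar(y_1) + \omega_{y_2}^2\,\vvar(y_2) + 2\,\omega_{y_1}\omega_{y_2}\,\cov[S_{y_1},S_{y_2}].
\]
For the covariance term I would argue as follows. Since $S_{y_\alpha}$ is a (deterministic) function of $\sb{y_\alpha}$ and internal (and, by the stated convention, leaf) nodes have mean zero, $\expec[S_{y_\alpha}]=\bbias(y_\alpha)\expec[\s_{y_\alpha}]=0$; and since $x$ separates the leaves below $y_1$ from those below $y_2$, $\sb{y_1}$ and $\sb{y_2}$ are conditionally independent given $\s_x$. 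Conditioning on $\s_x$, using the Markov property, the bias guarantee \eqref{eq:biasguarantee}, and the Gaussian identity $\expec[\s_{y_\alpha}\mid\s_x]=\rho_{e_\alpha}\s_x$ (unit variance at $x$), one gets $\expec[S_{y_\alpha}\mid\s_x]=\bbias(y_\alpha)\rho_{e_\alpha}\s_x$, hence $\cov[S_{y_1},S_{y_2}]=\bbias(y_1)\bbias(y_2)\rho_{e_1}\rho_{e_2}$.

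Next I would bound the three terms separately. From the proof of Proposition~\ref{proposition:bias} we have $\omega_{y_\alpha}=O(1)$ with a constant depending only on $g,\eps,\delta$ and \emph{not} on $c$; together with $\bbias(y_\alpha)\le 1+\delta$ and $\rho_{e_\alpha}\le 1$ this bounds the cross term by a constant $B=B(g,\eps,\delta)$. For the first two terms, \eqref{eq:varguarantee} gives $\vvar(y_\alpha)\le c$, so their sum is at most $c\,(\omega_{y_1}^2+\omega_{y_2}^2)$, and by \eqref{eq:solution},
\[
\omega_{y_1}^2+\omega_{y_2}^2 = \frac{1}{\ebbias(y_1)^2\erho_{e_1}^2 + \ebbias(y_2)^2\erho_{e_2}^2}.
\]
Using $|\ebias(y_\alpha)-\bias(y_\alpha)|\le\frac72\eps$ (established inside the proof of Proposition~\ref{proposition:bias}), the weight hypothesis \eqref{eq:weightassumption}, and $\weight_{e_\alpha}<g$, the denominator is at least $2(1-\delta')^2 e^{-2(g+\eps)}$, where $\delta'\to 0$ as $\eps,\delta\to 0$. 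This is the one place the Kesten--Stigum regime enters: since $g<\critks=\ln\sqrt2$ we have $2e^{-2g}>1$, so $A:=\tfrac{e^{2(g+\eps)}}{2(1-\delta')^2}<1$ once $\eps,\delta$ are small enough; fix $\eta>0$ with $A\le 1-\eta$. Combining, $\var[S_x]\le cA+B\le c(1-\eta)+B$, which is at most $c$ as soon as $c\ge B/\eta$ (and $c\ge 1$, which also covers the base case $\vvar(u)=\var[\s_u]=1$ at the leaves).

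The quantifier order is: given $g<\critks$, first choose $\eps,\delta$ small enough that Propositions~\ref{proposition:concentration} and~\ref{proposition:bias} apply and $A\le 1-\eta$; then choose $c$ large. The only real care needed is the bookkeeping of the $O(\eps)$ discrepancies between $\ebbias,\erho$ and $\bbias,\rho$, and checking there is no circular dependence on $c$ --- in particular that the $O(1)$ bound on $\omega_{y_\alpha}$ is $c$-free, which it is, being $\tfrac{(1+\delta')e^{2(g+\eps)}}{2(1-\delta')^2}$. I do not anticipate a genuine obstacle: all the substance lies in the strict inequality $\omega_{y_1}^2+\omega_{y_2}^2<1$ afforded by the Kesten--Stigum condition, which forces the variance to contract by a fixed factor at each level (up to an additive constant).
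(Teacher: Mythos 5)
Your proof is correct and follows essentially the same route as the paper's: decompose $\var[S_x]$ into the two diagonal terms bounded by $c(\omega_{y_1}^2+\omega_{y_2}^2)$ plus a $c$-free cross term, use the Kesten--Stigum condition $2e^{-2g}>1$ (via \eqref{eq:solution}) to get $\omega_{y_1}^2+\omega_{y_2}^2\le 1-\eta<1$ for $\eps,\delta$ small, and then choose $c$ large to close the recursion. Your extra care in flagging that the $O(1)$ bound on $\omega_{y_\alpha}$ must be $c$-free to avoid circularity, and in spelling out the quantifier order, is a worthwhile clarification of a point the paper leaves implicit; you also correctly include the factor of $2$ in the covariance term, which the paper's displayed computation drops (harmlessly, as it only tightens the bound).
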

\begin{proof}
From (\ref{eq:solution}),
\begin{eqnarray*}
\omega_{y_1}^2 + \omega_{y_2}^2
&=& \left(\frac{\rho_{e_1}^2}{(\rho_{e_1}^2 + \rho_{e_2}^2)^2} + \frac{\rho_{e_2}^2}{(\rho_{e_1}^2 + \rho_{e_2}^2)^2}\right)(1 + O(\eps + \delta))\\
&=& \left(\frac{1}{\rho_{e_1}^2 + \rho_{e_2}^2}\right)(1 + O(\eps + \delta))\\
&\leq& \frac{1}{2(\rho^*)^2}(1 + O(\eps + \delta)) < 1,
\end{eqnarray*}
for $\eps, \delta > 0$ small enough, where
$\rho^* = e^{-g}$ so that $2(\rho^*)^2 > 1$.
Moreover,
\begin{eqnarray*}
\var[S_x]
&=& \var[\omega_{y_1} S_{y_1}
+ \omega_{y_2} S_{y_2}]\\
&=& \omega_{y_1}^2 \var[S_{y_1}]
+ \omega_{y_2}^2 \var[S_{y_2}]
+ \omega_{y_1}\omega_{y_2} \E[S_{y_1}S_{y_2}]\\
&\leq& (\omega_{y_1}^2
+ \omega_{y_2}^2) c
+ \omega_{y_1}\omega_{y_2}
\bbias(y_1) \bbias(y_2) \Sigma_{uv}\\
&\leq& (\omega_{y_1}^2
+ \omega_{y_2}^2) c
+ \omega_{y_1}\omega_{y_2}
(1+\delta)^2\\
&<& c,
\end{eqnarray*}
taking $c$ large enough.
\end{proof}

\subsection{Topology reconstruction}
\label{section:topology}

Propositions~\ref{proposition:bias} and~\ref{proposition:variance}
rely on the knowing the topology below $x$.
In this section, we show how this is performed inductively.
That is, we assume the topology is known up to
level  $0 \leq h' < h$ and that hidden state estimators
have been derived up to that level. We then construct
the next level of the tree.

\paragraph{Quartet Reconstruction}
Let $L_{h'}$ be the set of
vertices in $V$ at level $h'$ from the leaves
and let
$\qcal = \{a,b,c,d\} \subseteq L_{h'}$
be a $4$-tuple on level $h'$.
The topology of $T$ restricted to $\qcal$
is completely characterized by a bipartition or
{\em quartet split} $q$ of the form:
$a b | c d$, $a c | b d$ or
$a d | b c$.
The most basic operation in quartet-based reconstruction
algorithms is the inference of such quartet splits.
This is done by performing a {\em four-point test}:
letting
\begin{equation*}
\fcal(a b | c d)
= \frac{1}{2}[\weight(a,c) + \weight(b,d)
- \weight(a,b) - \weight(c,d)],
\end{equation*}
we have
\begin{equation*}
q
=
\left\{
\begin{array}{ll}
a b | c d & \mathrm{if\ }\fcal(a,b|c,d) > 0\\
a c | b d & \mathrm{if\ }\fcal(a,b|c,d) < 0\\
a d | b c & \mathrm{o.w.}
\end{array}
\right.
\end{equation*}
Note however that we cannot estimate directly
the values
$\weight(a,c)$, $\weight(b,d)$,
$\weight(a,b)$, and $\weight(c,d)$
for internal nodes, that is, when $h' > 0$.
Instead we use the internal estimates
described in Proposition~\ref{proposition:concentration}.

\paragraph{Deep Four-Point Test} Let $D > 0$.
We let
\begin{equation*}
\widehat\fcal(a b | c d)
= \frac{1}{2}[\edist(a,c)
+ \edist(b,d)
- \edist(a,b)
- \edist(c,d)],\label{eq:fcal}
\end{equation*}
and
\begin{equation*}
\longtest(\scal) = \ind\{\edist(x,y) \leq D,\ \forall x,y\in\scal\}.
\end{equation*}
We define the {\em deep four-point test}
\begin{equation*}
\deep(a,b|c,d) = \longtest(\{a,b,c,d\}) \ind\{\widehat\fcal(a b | c d) > f/2\}.
\end{equation*}

\noindent\textbf{Algorithm.}
Fix $\gamma > 2$,
$0 < \eps < f/4$, $0 < \delta < 1$
and $D = 4g + 2\ln (1 + \delta) + \eps$.
Choose $c, \kappa$ so as to satisfy Proposition~\ref{proposition:concentration}.
Let $\zcal_{0}$ be the set of leaves.
The algorithm is detailed in Figure~\ref{fig:algo}.
\begin{figure*}[ht]
\framebox{
\begin{minipage}{13.2cm}
{\small \textbf{Algorithm}\\
\textit{Input:} Samples $(\s_{[n]}^i)_{i=1}^k$;\\
\textit{Output:} Tree;

\begin{itemize}
\item For $h' = 0,\ldots,h-1$,
\begin{enumerate}
\item \textbf{Deep Four-Point Test.}
Let
\begin{equation*}
\rcal_{h'} = \{q = ab|cd\ :\ \forall a,b,c,d \in \zcal_{h'}\ \text{distinct such that}\ \deep(q) = 1\}.
\end{equation*}

\item \textbf{Cherries.} Identify the cherries in $\rcal_{h'}$, that is, those pairs of vertices
that only appear on the same side of the quartet splits in $\rcal_{h'}$.
Let
\begin{equation*}
\zcal_{h'+1} = \{x_1^{(h'+1)},\ldots,x_{2^{h - (h'+1)}}^{(h'+1)}\},
\end{equation*}
be the parents of the cherries in $\zcal_{h'}$

\item \textbf{Edge Weights.} For all $x' \in \zcal_{h'+1}$,
\begin{enumerate}
\item Let $y'_1, y'_2$ be the children of $x'$.
Let $z'_1,z'_2$ be the children of $y'_1$.
Let $w'$ be any other vertex in $\zcal_{h'}$
with $\longtest(\{z'_1,z'_2,y'_2,w'\}) = 1$.

\item Let $e'_1$ be the edge between $y'_1$ and $x'$. Set
\begin{equation*}
\eweight_{e'_1} = \triplet(z'_1,z'_2;y'_2,w').
\end{equation*}

\item Repeat interchanging the role of $y'_1$ and $y'_2$.

\end{enumerate}
\end{enumerate}

\end{itemize}

}
\end{minipage}
} \caption{Tree-building algorithm.
In the deep four-point test,
internal distance estimates are used
as described in Section~\ref{section:recursive}.
} \label{fig:algo}
\end{figure*}

\subsection{Estimating the Edge Weights}
\label{section:weights}

Propositions~\ref{proposition:bias} and~\ref{proposition:variance} also
rely on edge-length estimates.
In this section, we show how this estimation is performed, assuming the tree topology is known
below $x' \in L_{h'+1}$
and edges estimates are known below level $h'$.
In Figure~\ref{fig:algo}, this procedure is used as a subroutine in the tree-building algorithm.

Let $y'_1,y'_2$ be the children of $x'$ and
let $e'_1,e'_2$ be the corresponding edges.
Let $w'$ in $L_{h'}$ be a vertex not descended from $x'$. (One should think of $w'$ as being on the same
level as on a neighboring subtree.)
Our goal is to estimate the weight of $e'_1$.
Denote by $z'_1, z'_2$ the children of $y'_1$.
(Simply set $z'_1 = z'_2 = y'_1$ if $y'_1$ is a leaf.)
Note that the internal edge of the quartet formed by $z'_1,z'_2,y'_2,w'$ is $e'_1$.
Hence, we use the standard four-point formula to compute the length of $e'_1$:
\begin{equation*}
\eweight_{e'_1}
\equiv \triplet(z'_1, z'_2; y'_2, w')
= \frac{1}{2}(\edist(z'_1,y'_2) + \edist(z'_2,w') - \edist(z'_1,z'_2) - \edist(y'_2,w')),
\end{equation*}
and
$\erho_{e'_1} = e^{-\eweight_{e'_1}}$.
Note that, with this approach, the biases at
$z'_1,z'_2,y'_2,w'$ cancel each other.
This technique was used in~\cite{DaMoRo:11a}.
\begin{proposition}[Edge-Weight Estimation]\label{proposition:weights}
Consider the setup above.
Assume that
for all $a,b \in \{z'_1,z'_2,y'_2,w'\}$ we have
\begin{equation*}
|\edist(a,b) - (\weight(a,b) + \bias(a) + \bias(b))| < \eps/2,
\end{equation*}
for some $\eps > 0$.
Then,
$|\eweight_{e'_1} - \weight_{e'_1}| < \eps$.
\end{proposition}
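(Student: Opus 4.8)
The plan is to recognize $\eweight_{e'_1} = \triplet(z'_1,z'_2;y'_2,w')$ as nothing but the classical four-point formula for the length of the internal edge of the quartet $z'_1 z'_2 \mid y'_2 w'$, evaluated on the \emph{bias-inflated} pseudo-distances $\edist$ instead of on the true metric $\weight$, and then to check two things: that the extra bias terms carried by $\edist$ cancel in this particular linear combination, and that the remaining first-order errors stay small.

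First I would establish the exact identity for the true metric. Since $z'_1,z'_2$ are the children of $y'_1$, $y'_2$ is the sibling of $y'_1$ under $x'$, and $w'$ is not a descendant of $x'$, the quartet induced on $\{z'_1,z'_2,y'_2,w'\}$ has split $z'_1 z'_2 \mid y'_2 w'$ with internal edge exactly $e'_1 = (x',y'_1)$. Expanding each pairwise distance along its tree path (for instance $\weight(z'_1,y'_2) = \weight(z'_1,y'_1) + \weight_{e'_1} + \weight(x',y'_2)$, and likewise for the others) and using additivity of $\weight$, a routine cancellation gives
\[
\tfrac12\bigl(\weight(z'_1,y'_2) + \weight(z'_2,w') - \weight(z'_1,z'_2) - \weight(y'_2,w')\bigr) = \weight_{e'_1}.
\]

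Next I would substitute the hypothesis, writing $\edist(a,b) = \weight(a,b) + \bias(a) + \bias(b) + \eta_{ab}$ with $|\eta_{ab}| < \eps/2$, into the definition of $\triplet(z'_1,z'_2;y'_2,w')$. The crucial bookkeeping observation is that in the combination $\edist(z'_1,y'_2) + \edist(z'_2,w') - \edist(z'_1,z'_2) - \edist(y'_2,w')$ each of the four endpoints appears exactly once with a plus and once with a minus sign, so the four bias contributions $\bias(z'_1),\bias(z'_2),\bias(y'_2),\bias(w')$ cancel identically --- this is precisely why the auxiliary vertex $w'$ and the pair of children $z'_1,z'_2$ were chosen in the algorithm. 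What survives is the true-metric four-point combination above plus the error $\eta_{z'_1 y'_2} + \eta_{z'_2 w'} - \eta_{z'_1 z'_2} - \eta_{y'_2 w'}$. Combining this with the identity of the previous step and bounding the four error terms by the triangle inequality yields
\[
\bigl|\eweight_{e'_1} - \weight_{e'_1}\bigr| < \tfrac12\cdot 4\cdot\tfrac{\eps}{2} = \eps,
\]
which is the claim. (This cancellation is the same device used in~\cite{DaMoRo:11a}.)

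The only point requiring a little care --- and the one I would treat as the main obstacle --- is the degenerate case where $y'_1$ is a leaf, in which the convention $z'_1 = z'_2 = y'_1$ collapses several terms. There one needs that $\edist(y'_1,y'_1)$ still estimates $\weight(y'_1,y'_1) + 2\bias(y'_1) = 0$ within the same tolerance, which holds because $S_{y'_1} = X_{y'_1}$ has unit variance and zero bias; with this in hand, both the path/additivity identity and the plus/minus cancellation of biases go through verbatim after identifying the two copies of $y'_1$. Modulo this case, the argument is a one-line substitution, so no probabilistic input is needed here: the randomness has already been absorbed into the hypothesis, which is supplied (with $\eps$ in the role of $\eps/2$ above) by Proposition~\ref{proposition:concentration}.
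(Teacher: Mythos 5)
Your proof is correct and takes essentially the same approach the paper intends: the paper declines to spell out the calculation (it states only that it ``follows from a calculation similar to the proof of Proposition~\ref{proposition:bias}''), and you have carried out exactly that substitution-and-telescoping argument, making explicit the bias cancellation that the surrounding text already highlights. Your side remark on the degenerate leaf case ($z'_1 = z'_2 = y'_1$) is a welcome detail the paper leaves implicit, and your error accounting giving $\tfrac12\cdot 4\cdot\tfrac{\eps}{2} = \eps$ is accurate.
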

This result follows from a calculation similar to the proof of Proposition~\ref{proposition:bias}.

\subsection{Proof of Theorem~\ref{thm:maingaussian}}

We are now ready to prove Theorem~\ref{thm:maingaussian}.

\begin{proof}(Theorem~\ref{thm:maingaussian})
All steps of the algorithm are completed in polynomial
time in $n$ and $k$.

We argue about the correctness by induction
on the levels. Fix $\gamma > 2$.
Take $\delta > 0$, $0 < \eps < f/4$ small enough and
$c, \kappa$ large enough so that
Propositions~\ref{proposition:concentration},
\ref{proposition:bias},
\ref{proposition:variance},
\ref{proposition:weights} hold.
We divide the $\kappa \log^2 n$ samples
into $\log n$ blocks.

Assume that, using the first
$h'$ sample blocks, the topology of the
model has been correctly reconstructed
and that we have edge
estimates satisfying (\ref{eq:weightassumption})
up to level $h'$.
Assume further that we have hidden state
estimators satisfying (\ref{eq:biasguarantee})
and (\ref{eq:varguarantee}) up to level $h' - 1$
(if $h' \geq 1$).

We now use the next block of samples
which is independent of everything used
until this level.
When $h'=0$, we can use the samples directly
in the Deep Four-Point Test.  Otherwise,
we construct a linear hidden-state estimator
for all vertices on level $h'$. Propositions~\ref{proposition:bias} and \ref{proposition:variance}
ensure that conditions (\ref{eq:biasguarantee})
and (\ref{eq:varguarantee}) hold for the
new estimators. By Proposition~\ref{proposition:concentration} applied to the new
estimators
and our choice of $D = 4g + 2\ln (1 + \delta) + \eps$,
all cherries on level $h'$ appear in at least one
quartet and the appropriate quartet splits
are reconstructed. Note that the second and third terms
in $D$ account for the
bias and sampling error respectively. Once the
cherries on level $h'$ are reconstructed,
Proposition~\ref{proposition:weights}
ensures that the edge weight are estimated
so as to satisfy (\ref{eq:weightassumption}).

That concludes the induction.
\end{proof}

\subsection{Kesten-Stigum regime: Gaussian case}\label{sec:ThmNonrecon}

In this section, we derive the critical threshold for HSI
in Gaussian tree models. The section culminates
with a proof of Theorem~\ref{thm:gaussianNonRecon}
stating that TME cannot in general be achieved outside 
the KS regime without at least polynomially many
samples.

\subsubsection{Definitions}

Recall that the {\em mutual information}
between two random
vectors $\mathbf{Y}_1$ and
$\mathbf{Y}_2$ is defined as
$$
I(\mathbf{Y}_1;\mathbf{Y}_2)
= H(\mathbf{Y}_1) + H(\mathbf{Y}_2)
- H(\mathbf{Y}_1,\mathbf{Y}_2),
$$
where $H$ is the {\em entropy}, that is,
$$
H(\mathbf{Y}_1)
= - \int f_1(\mathbf{y}_1)
\log f_1(\mathbf{y}_1) d \mathbf{y}_1,
$$
assuming $\mathbf{Y}_1$ has density
$f_1$. See e.g.~\cite{CoverThomas:91}. In the Gaussian
case, if $\mathbf{Y}_1$ has covariance
matrix $\Sigma_1$, then
$$
H(\mathbf{Y}_1)
= \frac{1}{2} \log (2 \pi e)^{n_1} |\Sigma_1|,
$$
where $|\Sigma_1|$ is the determinant
of the $n_1\times n_1$ matrix $\Sigma_1$.
\begin{definition}[Solvability]
Let $\s^{(h)} _V$ be a GMRFT on
balanced tree
$$
\phy^{(h)} = (V^{(h)},E^{(h)},[n^{(h)}],\rt^{(h)};\weight^{(h)}),
$$
where $n^{(h)} = 2^h$
and $\weight^{(h)}_e = \weight > 0$
for all $e \in E^{(h)}$.
For convenience we denote
the root by $0$.
We say that the GMRFT
root state reconstruction problem with $\weight$
is {\em solvable} if
$$
\liminf_{h \to \infty} I\left(\s^{(h)}_{0};
\s^{(h)}_{[n^{(h)}]}\right) > 0,
$$
that is, if the mutual information between
the root state and leaf states remains bounded
away from $0$ as the tree size goes to
$+\infty$.
\end{definition}

\subsubsection{Threshold}

Our main result in this section is the following.
\begin{theorem}[Gaussian Solvability]
The GMRFT reconstruction problem
is solvable if and only if
$$
2 e^{-2\weight} > 1.
$$
When $2 e^{-2\weight} < 1$ then
\begin{equation}\label{e:mutualInfoDecay}
I\left(\s^{(h)}_{0}; \s^{(h)}_{[n^{(h)}]}\right) = \left[2 e^{-2\weight}\right]^h \cdot \frac{1-2 e^{-2\weight}+o(1)}{2-2 e^{-2\weight}},
\end{equation}
as $h\to\infty$.
\end{theorem}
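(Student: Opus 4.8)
The plan is to use joint Gaussianity to reduce the mutual information to the conditional variance of the root given the leaves, to extract a scalar recursion for this variance from the self-similar structure of the balanced tree, and then to solve that recursion in closed form.

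\textbf{Step 1 (reduction to a conditional variance).} Since $\bigl(\s^{(h)}_0,\s^{(h)}_{[n^{(h)}]}\bigr)$ is jointly Gaussian, the conditional law of $\s^{(h)}_0$ given $\s^{(h)}_{[n^{(h)}]}$ is Gaussian with a variance $\gamma_h := \var\bigl(\s^{(h)}_0 \mid \s^{(h)}_{[n^{(h)}]}\bigr)$ that does not depend on the observed leaf values. Writing $I\bigl(\s^{(h)}_0;\s^{(h)}_{[n^{(h)}]}\bigr) = H\bigl(\s^{(h)}_0\bigr) - H\bigl(\s^{(h)}_0 \mid \s^{(h)}_{[n^{(h)}]}\bigr)$ and using the Gaussian entropy formula together with $\var(\s^{(h)}_0)=1$, I get
\begin{equation*}
I\bigl(\s^{(h)}_0;\s^{(h)}_{[n^{(h)}]}\bigr) \;=\; -\tfrac12\log\gamma_h \;=\; \tfrac12\log(1+s_h), \qquad s_h := \gamma_h^{-1}-1 \;\ge\; 0 .
\end{equation*}
Hence solvability is equivalent to $\liminf_{h}s_h>0$, and the precise decay rate will follow from an expansion of $s_h$.

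\textbf{Step 2 (the recursion -- the crux).} Write $\rho := e^{-\weight}$, so that $\s_v = \rho\,\s_u + \sqrt{1-\rho^2}\,\xi_v$ whenever $u$ is the parent of $v$, the $\xi_v$ i.i.d.\ standard normal, and all internal nodes have unit variance. The root $0$ has two children $u_1,u_2$, each rooting an independent copy of a height-$(h-1)$ balanced tree; by the global Markov property the leaves below $u_1$ and those below $u_2$ are conditionally independent given $\s_0$. Running Gaussian belief propagation from the leaves upward, the message that the subtree below $u_\alpha$ sends to $\s_0$ is, up to a multiplicative constant, the function
\begin{equation*}
x \;\longmapsto\; \int \frac{\mathcal{N}\!\left(y;\,m_\alpha,\,\gamma_{h-1}\right)}{\mathcal{N}(y;\,0,\,1)}\;\mathcal{N}\!\left(y;\,\rho x,\,1-\rho^2\right)\,dy ,
\end{equation*}
where $m_\alpha$ is the posterior mean of $\s_{u_\alpha}$ given the leaves below $u_\alpha$ (its value affects only the linear term, not the precision, of the message). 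Completing the square in $y$ --- the Gaussian integral converges since $s_{h-1}+\tfrac{1}{1-\rho^2}>0$ --- shows this message is a Gaussian potential in $x$ contributing precision $\rho^2 s_{h-1}/\bigl(1+(1-\rho^2)s_{h-1}\bigr)$. Adding the two symmetric contributions to the root's prior precision $1$ yields
\begin{equation*}
\frac{1}{\gamma_h} \;=\; 1 + \frac{2\rho^2 s_{h-1}}{1+(1-\rho^2)s_{h-1}}, \qquad\text{i.e.}\qquad s_h \;=\; \frac{2\rho^2\,s_{h-1}}{1+(1-\rho^2)s_{h-1}},
\end{equation*}
with base case $s_1 = 2\rho^2/(1-\rho^2)$ (a leaf conditioned on itself has variance $0$, i.e.\ $s_0 = +\infty$). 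Essentially all of the work is here: one must track the unit-variance normalization at internal nodes and check that each message is a proper Gaussian; after that the remaining steps are elementary.

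\textbf{Step 3 (solving the recursion and concluding).} Substituting $r_h := 1/s_h$ linearizes the recursion to $r_h = \lambda^{-1} r_{h-1} + \lambda^{-1}(1-\rho^2)$ with $\lambda := 2\rho^2 = 2e^{-2\weight}$; using $r_1 = (1-\rho^2)/\lambda$ its solution is
\begin{equation*}
r_h \;=\; \frac{(1-\rho^2)\bigl(\lambda^{-h}-1\bigr)}{1-\lambda} \quad (\lambda\ne1), \qquad\text{so}\qquad s_h \;=\; \frac{\lambda^h(1-\lambda)}{(1-\rho^2)\bigl(1-\lambda^h\bigr)},
\end{equation*}
while $\lambda=1$ gives $r_h = r_1 + (h-1)(1-\rho^2)\to\infty$. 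If $\lambda>1$ then $r_h\to(1-\rho^2)/(\lambda-1)$, so $s_h\to(\lambda-1)/(1-\rho^2)>0$ and $I\to\tfrac12\log\tfrac{\rho^2}{1-\rho^2}>0$: solvable. If $\lambda\le1$ then $s_h\downarrow0$, so $I\to0$: not solvable. Finally, for $\lambda<1$ we have $s_h=O(\lambda^h)$, hence $\tfrac12\log(1+s_h)=\tfrac{s_h}{2}(1+o(1))$; substituting $1-\rho^2=(2-\lambda)/2$ and $\bigl(1-\lambda^h\bigr)^{-1}=1+o(1)$ into the closed form gives
\begin{equation*}
I\bigl(\s^{(h)}_0;\s^{(h)}_{[n^{(h)}]}\bigr) \;=\; \lambda^h\cdot\frac{1-\lambda}{2-\lambda}\,(1+o(1)) \;=\; \bigl[2e^{-2\weight}\bigr]^h\cdot\frac{1-2e^{-2\weight}+o(1)}{2-2e^{-2\weight}},
\end{equation*}
exactly as claimed. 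The one substantive point is the belief-propagation recursion of Step 2; Steps 1 and 3 are routine.
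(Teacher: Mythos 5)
Your proof is correct, and it takes a genuinely different route from the paper's. The paper computes $I_h = \tfrac12\log\bigl(|\covmat^{(h)}|/|\covmatz^{(h)}|\bigr)$ by diagonalizing the full $n\times n$ and $(n+1)\times(n+1)$ covariance matrices: eigenvectors $\vec\perp\ones_n$ lift from level $h-1$ to level $h$ and cancel between numerator and denominator, the all-ones direction contributes the eigenvalue $R_h = 1 + \rho^2\frac{(2\rho^2)^h-1}{2\rho^2-1}$, and the two remaining eigenvectors $(1,\beta\ones_n)$ of $\covmatz^{(h)}$ give a product that collapses to $R_h - (2\rho^2)^h$, yielding $I_h = -\tfrac12\log\bigl(1 - (2\rho^2)^h/R_h\bigr)$. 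You instead work with the conditional variance $\gamma_h = \var(\s_0^{(h)}\mid\s_{[n]}^{(h)})$ and $I_h = -\tfrac12\log\gamma_h$, derive the scalar message-passing recursion $s_h = \frac{2\rho^2 s_{h-1}}{1+(1-\rho^2)s_{h-1}}$ for $s_h = \gamma_h^{-1}-1$ from the two conditionally independent subtrees and the Gaussian BP update, linearize it via $r_h = 1/s_h$, and solve. One can verify directly that $s_h = \frac{(2\rho^2)^h}{R_h - (2\rho^2)^h}$, so the two forms agree. Your route avoids the eigendecomposition entirely, reduces the problem to a one-dimensional affine recursion, and handles the critical case $2\rho^2=1$ explicitly (the paper omits it as a "similar argument"); the paper's route additionally exposes the full spectrum of the leaf covariance matrix, which is useful structural information but not needed for this theorem. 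The only point that deserves care in your write-up --- and you flag it yourself --- is the BP step: one must check that the outgoing message from each subtree is a proper Gaussian potential (precision $\ge 0$), which follows since conditioning cannot increase Gaussian variance so $s_{h-1}\ge 0$, and that the self-similarity of the balanced tree (unit variance at every internal node) makes the height-$(h-1)$ inductive hypothesis applicable to both subtrees.
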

\begin{proof}
Fix $h \geq 0$ and let
$n = n^{(h)}$,
$$
I_h
= I\left(\s^{(h)}_{0};
\s^{(h)}_{[n]}\right),
$$
$[[n]] = \{0,\ldots,n\}$, and $\rho = e^{-\weight}$.
Assume $2 \rho^2 \neq 1$.
(The case $2 \rho^2 = 1$ follows
by a similar argument which we omit.)
Denote by $\covmat^{(h)}$
and $\covmatz^{(h)}$ the covariance
matrices of $\s^{(h)}_{[n]}$
and $(\s^{(h)}_{0},
\s^{(h)}_{[n]})$ respectively.
Then
$$
I_h
= \frac{1}{2}\log \left(
\frac{|\covmat^{(h)}|}{|\covmatz^{(h)}|}
\right).
$$
Let $\ones_n$ be the all-one vector
with $n$ elements. To compute the
determinants above, we note that
each eigenvector $\vec\perp \ones_n$
of $\covmat^{(h)}$
gives an eigenvector $(0,\vec)$ of
$\covmatz^{(h)}$ with the same eigenvalue.
There are $2^h - 1$ such eigenvectors.
Further $\ones_n$ is an eigenvector of $\covmat^{(h)}$
with positive eigenvalue corresponding to the
sum of all pairwise correlation between a leaf
and all other leaves (including itself), that is,
$$
R_h
= 1 + \sum_{l=1}^h \rho^{2l} 2^{l-1}
= 1 + \rho^2\left(
\frac{(2\rho^2)^h - 1}{2\rho^2 - 1}
\right).
$$
(The other eigenvectors are obtained
inductively by noticing that each eigenvector
$\vec$ for size $2^{h-1}$ gives
eigenvectors $(\vec,\vec)$
and $(\vec,-\vec)$ for
size $2^h$.) Similarly the remaining two
eigenvectors of $\covmatz^{(h)}$
are of the form $(1, \beta \ones_n)$
with
$$
\covmatz^{(h)} (1,\beta \ones_n)'
= (1 + \beta 2^h \rho^h, (\rho^h + \beta R_h)\ones_n)'
= \lambda (1,\beta \ones_n)',
$$
whose solution is
$$
\beta_h^{\pm}
= \frac{(R_h - 1) \pm \sqrt{(R_h - 1)^2 + 4 \rho^{2h} 2^h}}{2 \rho^h 2^h},
$$
and
$$
\lambda_h^{\pm}
= 1 + \beta_h^{\pm} 2^h \rho^h.
$$
Moreover note that
\begin{eqnarray*}
\lambda_h^+
\lambda_h^-
&=& 1 + (\beta_h^+ + \beta_h^-) 2^h \rho^h
+ \beta_h^+ \beta_h^- 2^{2h} \rho^{2 h}\\
&=& 1 + (R_h - 1) - \rho^{2h} 2^h]\\
&=& R_h - (2\rho^2)^h.
\end{eqnarray*}
Hence
\begin{eqnarray*}
I_h
&=& \frac{1}{2}\log \left(
\frac{|\covmat^{(h)}|}{|\covmatz^{(h)}|}
\right)\\
&=& \frac{1}{2}\log \left(
\frac{R_h}{\lambda_h^+ \lambda_h^-}
\right)\\
&=& - \frac{1}{2}\log \left(
1 - \frac{(2\rho^2)^h}{R_h}
\right).
\end{eqnarray*}
Finally,
\begin{displaymath}
I_h
\to
\begin{cases}
0, & \text{if $2\rho^2 < 1$},\\
- \frac{1}{2}\log \left(
\frac{1}{\rho^2} - 1
\right), & \text{if $2\rho^2 > 1$},
\end{cases}
\end{displaymath}
as $h \to +\infty$ with equation~\eqref{e:mutualInfoDecay} established by a Taylor series expansion in the limit.
\end{proof}

\subsubsection{Hidden state reconstruction}

We make precise the
connection between solvability and
hidden state estimation. We are interested
in deriving good estimates of $\s^{(h)}_0$
given $\s^{(h)}_{[n]}$. Recall that the
conditional expectation $\E[\s^{(h)}_0 | \s^{(h)}_{[n]}]$
minimizes the mean squared error (MSE)~\cite{Anderson:58}.
Let $\Lambda_{[n]}^{(h)} = (\covmat^{(h)})^{-1}$.
Under the Gaussian distribution,
conditional on $\s^{(h)}_{[n]}$, the distribution
of $\s^{(h)}_0$ is Gaussian with
mean
\begin{equation}\label{e:condMeanPosterior}
\rho^h \ones_n \Lambda_{[n]}^{(h)} \s^{(h)}_{[n]}
= \frac{\rho^h}{R_h} \ones_n \s^{(h)}_{[n]},
\end{equation}
and
covariance
\begin{equation}\label{e:covarPosterior}
1 - \rho^{2h}\ones_n \Lambda_{[n]}^{(h)} \ones_n'
= 1 - \frac{(2\rho^2)^h}{R_h}
= e^{-2 I_h}.
\end{equation}
The MSE is then given by
$$
\E[(\s^{(h)}_0 - \E[\s^{(h)}_0 | \s^{(h)}_{[n]}])^2]
=\E[\var[\s^{(h)}_0 | \s^{(h)}_{[n]}]]
=e^{-2 I_h}.
$$
\begin{theorem}[Linear root-state estimation]
The linear root-state estimator
$$
\frac{\rho^h}{R_h} \ones_n \s^{(h)}_{[n]}
$$
has asymptotic MSE $< 1$
as $h \to +\infty$ if and only if
$2 e^{-2\weight} > 1$. (Note that
achieving an MSE of $1$ is trivial
with the estimator identically zero.)
\end{theorem}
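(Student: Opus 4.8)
The plan is to recognize that, because the model is Gaussian, the linear estimator in the statement is nothing but the Bayes-optimal (conditional-mean) estimator of $\s^{(h)}_0$ given $\s^{(h)}_{[n]}$, so its MSE can be read off directly from the quantities already computed above. First I would observe that since $(\s^{(h)}_0,\s^{(h)}_{[n]})$ is jointly Gaussian, the posterior mean $\E[\s^{(h)}_0\mid\s^{(h)}_{[n]}]$ is linear in $\s^{(h)}_{[n]}$, and by~\eqref{e:condMeanPosterior} it equals exactly $\tfrac{\rho^h}{R_h}\ones_n\s^{(h)}_{[n]}$, i.e.\ the estimator in the statement. The only algebraic input is the identity $\ones_n\Lambda^{(h)}_{[n]}=R_h^{-1}\ones_n$, which holds because $\ones_n$ is an eigenvector of $\covmat^{(h)}$ with eigenvalue $R_h$, as established in the proof of the Gaussian Solvability theorem. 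Consequently the MSE of this estimator is the minimum mean squared error, and by~\eqref{e:covarPosterior} equals $\E[\var[\s^{(h)}_0\mid\s^{(h)}_{[n]}]]=e^{-2I_h}=1-(2\rho^2)^h/R_h$, with $\rho=e^{-\weight}$.

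Next I would take $h\to\infty$ using the closed form $R_h=1+\rho^2\big((2\rho^2)^h-1\big)/(2\rho^2-1)$ (and $R_h=1+h/2$ in the boundary case $2\rho^2=1$), splitting into cases. If $2\rho^2<1$, then $R_h$ tends to the finite positive constant $1+\rho^2/(1-2\rho^2)$ while $(2\rho^2)^h\to 0$, so the ratio vanishes; if $2\rho^2=1$, then $(2\rho^2)^h=1$ while $R_h\to\infty$, so again the ratio vanishes; in both sub-cases the asymptotic MSE is $1$. If $2\rho^2>1$, then $R_h\sim\rho^2(2\rho^2)^h/(2\rho^2-1)$, so $(2\rho^2)^h/R_h\to 2-\rho^{-2}$ and the asymptotic MSE is $\rho^{-2}-1$.

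Finally I would close the equivalence: $\rho^{-2}-1<1$ is equivalent to $\rho^2>1/2$, i.e.\ to $2e^{-2\weight}>1$, and in that regime $\rho^{-2}-1>0$ as well (since $\rho<1$), consistent with the quantity being a conditional variance; in the complementary regime $2e^{-2\weight}\le 1$ the asymptotic MSE is exactly $1$, hence not $<1$. This yields the claimed "if and only if". I do not anticipate a genuine obstacle here: the statement is a short corollary of the Gaussian Solvability theorem combined with the Gaussian fact that the linear MMSE estimator coincides with the full MMSE estimator. The only point requiring a little care is the boundary case $2\rho^2=1$ and, relatedly, verifying that the limiting MSE is exactly $1$ (and not merely $\le 1$) outside the Kesten--Stigum regime, so that the "only if" direction is genuinely sharp.
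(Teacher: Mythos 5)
Your proposal is correct and follows essentially the same route as the paper: identify the estimator with the conditional mean via the identity $\ones_n \Lambda^{(h)}_{[n]} = R_h^{-1}\ones_n$, read off the MSE as the conditional variance $e^{-2I_h} = 1 - (2\rho^2)^h/R_h$, and take the limit using the closed form for $R_h$. The only difference is that you explicitly treat the boundary case $2\rho^2 = 1$, which the paper omits with the remark that it ``follows by a similar argument.''
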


The following observation explains why the
proof of our main theorem centers on the
derivation of an unbiased estimator
with finite variance.
Let $\es^{(h)}_0$ be a random variable
measurable with respect to the
$\sigma$-field generated by $\s^{(h)}_{[n]}$.
Assume that
$\E[\es^{(h)}_0|\s^{(h)}_0] = \s^{(h)}_0$,
that is, $\es^{(h)}_0$ is a conditionally
unbiased estimator of $\s^{(h)}_0$.
In particular $\E[\es^{(h)}_0] = 0$.
Then
\begin{eqnarray*}
\E[(\s^{(h)}_0 - \alpha\es^{(h)}_0)^2]
&=& \E[\E[(\s^{(h)}_0 - \alpha\es^{(h)}_0)^2|\s^{(h)}_0]]\\
&=& 1 - 2\alpha \E[\E[\s^{(h)}_0 \es^{(h)}_0|\s^{(h)}_0]] + \alpha^2 \var[\es^{(h)}_0]\\
&=& 1 - 2\alpha + \alpha^2 \var[\es^{(h)}_0],
\end{eqnarray*}
which is minimized for
$\alpha = 1/\var[\es^{(h)}_0]$. The minimum
MSE is then
$1 - 1/\var[\es^{(h)}_0]$. Therefore:
\begin{theorem}[Unbiased root-state estimator]
There exists a root-state estimator
with MSE $<1$ if and only if there exists
a conditionally unbiased root-state estimator with
finite variance.
\end{theorem}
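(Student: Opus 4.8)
The plan is to prove both implications quantitatively, in each case identifying the optimal rescaling. Throughout I fix $h$, abbreviate $\s_0 = \s^{(h)}_0$, $\s_{[n]} = \s^{(h)}_{[n]}$, $I = I_h$, and use that $\E[\s_0] = 0$ and $\var[\s_0] = 1$.

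For the implication $(\Leftarrow)$, suppose $\es_0$ is a conditionally unbiased root-state estimator with $\var[\es_0] < +\infty$. I would first record that $\var[\es_0] \geq 1$: since $\E[\es_0] = \E[\E[\es_0 \mid \s_0]] = \E[\s_0] = 0$, conditional Jensen gives $\var[\es_0] = \E[\es_0^2] \geq \E[(\E[\es_0 \mid \s_0])^2] = \E[\s_0^2] = 1$. Then, exactly as in the display preceding the theorem, the estimator $\alpha\,\es_0$ with $\alpha = 1/\var[\es_0] \in (0,1]$ is a root-state estimator with MSE $1 - 1/\var[\es_0] < 1$.

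For the implication $(\Rightarrow)$, the crucial point is that the MSE-optimal root-state estimator is the conditional expectation $\hat m := \E[\s_0 \mid \s_{[n]}]$, which by \eqref{e:condMeanPosterior} is \emph{linear} in the leaf states and hence \emph{jointly Gaussian} with $\s_0$; I would rescale this $\hat m$. Its moments are computed as follows: $\E[\hat m] = \E[\s_0] = 0$; by the law of total variance together with \eqref{e:covarPosterior}, $\var[\hat m] = \var[\s_0] - \E[\var[\s_0 \mid \s_{[n]}]] = 1 - e^{-2I}$, which also equals $1 - \mathrm{MSE}(\hat m)$; and because $(\hat m, \s_0)$ is jointly Gaussian with zero mean and $\var[\s_0] = 1$, $\E[\hat m \mid \s_0] = \cov(\hat m, \s_0)\,\s_0$, while $\cov(\hat m, \s_0) = \E[\hat m\,\s_0] = \E[\hat m\,\E[\s_0 \mid \s_{[n]}]] = \E[\hat m^2] = \var[\hat m]$, using that $\hat m$ is $\sigma(\s_{[n]})$-measurable. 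Hence $\E[\hat m \mid \s_0] = (1 - e^{-2I})\,\s_0$. Now if some root-state estimator has MSE $< 1$, then by optimality $\mathrm{MSE}(\hat m) < 1$, so $\var[\hat m] = 1 - e^{-2I} > 0$ and
\[
\es_0 \;:=\; \frac{1}{1 - e^{-2I}}\,\hat m
\]
is well defined, conditionally unbiased, and has finite variance $\var[\es_0] = \frac{1}{1 - e^{-2I}}$.

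I do not expect a genuine obstacle here. The only step that really uses the Gaussian hypothesis is $\E[\hat m \mid \s_0] = \cov(\hat m, \s_0)\,\s_0$, i.e.\ linearity of the Gaussian conditional expectation, which is precisely why the statement is restricted to GMRFT models; everything else is the tower property and the law of total variance. I would finish with the remark that the two directions together pin down the extremal constants: the minimum MSE over all root-state estimators is $e^{-2 I_h}$ (as computed earlier in this section) and the minimum variance over all conditionally unbiased root-state estimators is $\frac{1}{1 - e^{-2 I_h}}$, so that ``MSE bounded away from $1$ as $h \to \infty$'' is equivalent to ``conditional variance bounded as $h \to \infty$'' --- the form actually invoked in the proof of Theorem~\ref{thm:maingaussian}.
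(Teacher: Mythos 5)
Your proof is correct and takes essentially the same route as the paper. The $(\Leftarrow)$ direction reproduces the paper's display (rescale a conditionally unbiased finite-variance $\es_0$ by $\alpha=1/\var[\es_0]$ to get MSE $1-1/\var[\es_0]<1$), with the nice added Jensen observation that $\var[\es_0]\ge 1$, which the paper leaves implicit. The paper does not spell out $(\Rightarrow)$, leaving it to be read off the preceding computations of $\E[\s^{(h)}_0\mid\s^{(h)}_{[n]}]$ and its MSE $e^{-2I_h}$; you make it explicit exactly as intended: the optimal estimator $\hat m$ is linear in $\s_{[n]}$, so joint Gaussianity gives $\E[\hat m\mid\s_0]=\cov(\hat m,\s_0)\,\s_0=(1-e^{-2I_h})\,\s_0$, and rescaling by $1/(1-e^{-2I_h})$ (well defined since MSE $<1$ forces $I_h>0$) yields a conditionally unbiased estimator of variance $1/(1-e^{-2I_h})<\infty$. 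The closing observation that the two extremal constants, $e^{-2I_h}$ for minimum MSE and $1/(1-e^{-2I_h})$ for minimum variance, are each other's images under the rescaling correspondence is a pleasant consistency check and matches the paper's motivation for centering the algorithm on bias and variance control.
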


\subsubsection{Proof of Theorem~\ref{thm:gaussianNonRecon}}

Finally in this section we establish that when $2 e^{-2\weight} < 1$ the number of samples needed for TME grows like $n^\gamma$ proving Theorem~\ref{thm:gaussianNonRecon}.

\begin{proof}(Theorem~\ref{thm:gaussianNonRecon})
The proof follows the broad approach laid out  in~\cite{Mossel:03,Mossel:04a} for establishing sample size lower bounds for phylogenetic reconstruction.
Let $\phy$ and $\tilde \phy$ be 
$h$-level balanced trees with common edge weight 
$\weight$ 
and the same vertex set differing only 
in the quartet split between the four vertices at 
graph distance 2 from the root $U=\{u_1,\ldots,u_4\}$
(that is, the grand-children of the root). 
Let $\{\s_V^{i}\}_{i=1}^k$ and $\{\tilde \s_V^{i}\}_{i=1}^k$ be $k$ i.i.d. samples from the corresponding GMRFT.

Suppose that we are given the topology of the trees below level two from the root so that all that needs to be reconstructed is the top quartet split, that is, how $U$ splits.  By the Markov property and the properties of
the multivariate Gaussian distribution, 
$\{ Y_u^i \}_{u\in U,i\in\{1,\ldots,k\}}$ with $Y_u^i=\E[\s_u^i \mid \s_{\lfloor u \rfloor}^i]$ is a sufficient statistic for the topology of the top quartet, that is, it contains all the information given by the leaf states. 
Indeed, the conditional distribution of the states at $U$
depends on the leaf states only through the condition
expectations.
To prove the impossibility of TME with high probability, we will bound the total variation distance between $\underline{Y}=\{Y_u \}_{u\in U}$ and $\tilde{\underline{Y}}=\{\tilde Y_u \}_{u\in U}$.  We have that $\underline{Y}$  is a  mean 0 Gaussian vector and using equations \eqref{e:condMeanPosterior} and \eqref{e:covarPosterior} its covariance matrix $\Sigma^*_U$ is given by
\[
(\Sigma^*_U)_{uu} = \var[Y_u] = e^{-2 I_{h-2}}=1-O((2\rho^2)^h),
\]
and
\begin{eqnarray*}
(\Sigma^*_U)_{uu'} 
&=& \cov[Y_u,X_u]\cov[X_u,X_{u'}]\cov[X_{u'},Y_{u'}]\\
&=&\frac{(2\rho^2)^{2(h-2)}}{R_{h-2}^2}(\Sigma_U)_{uu'}\\
&=&O((2\rho^2)^{2h}).
\end{eqnarray*}
where $\Sigma_U$ is the covariance matrix of 
$X_U$.  The covariance matrix of $\tilde{\underline{Y}}$ is defined similarly.  Let $\Lambda^*_U$ (resp. $\tilde\Lambda^*_U$) denote the inverse covariance matrix $(\Sigma^*_U)^{-1}$ (resp. $(\tilde\Sigma^*_U)^{-1}$).  
We note that $\Sigma^*_U$ and $\tilde\Sigma^*_U$
are close to the identity matrix and, hence, so are their inverses~\cite{HornJohnson:85}. Indeed,
with $I_U$ the $4\times 4$-identity matrix, the elements of $\Sigma^*_U-I_U$ are all $O((2\rho^2)^h)$ and, similarly for $\tilde\Sigma^*_U$, which implies that
\begin{equation}\label{e:inverseMatrixDiff}
\sup_{u,u'} |\Lambda^*_{uu'} - \tilde \Lambda^*_{uu'}| = O((2\rho^2)^h).
\end{equation}
We let $d_{\mathrm{TV}}(\cdot,\cdot)$ denote the total variation distance of two random vectors.  Note that by symmetry $|\det \Lambda^*_U | = |\det\tilde \Lambda^*_U|$ and so, with $f_{\underline{Y}}(y)$ the density function of $\underline{Y}$, the total variation distance satisfies
\begin{align*}
d_{\mathrm{TV}}(\underline{Y},\tilde{\underline{Y}})  &= \frac12 \int_{\real^4} \left|\frac{f_{\tilde{\underline{Y}}}(\underline{y})}{f_{\underline{Y}}(\underline{y})}-1\right|f_{\underline{Y}}(\underline{y}) d\underline{y} \\
&= \frac12 \int_{\real^4} \left| \exp\left[-\frac12 \underline{y}^T \tilde\Lambda^*_U \underline{y} + \frac12 \underline{y}^T \Lambda^*_U \underline{y} \right] - 1 \right| f_{\underline{Y}}(\underline{y}) d\underline{y}\\
&\leq \frac12 \int_{\real^4} \left(\exp\left[O((2\rho^2)^h \sum_{j=1}^4 y_j^2 ) \right] - 1  \right) f_{\underline{Y}}(\underline{y}) d\underline{y}\\
&\leq \frac12 \int_{\real^4} \left(\exp\left[O((2\rho^2)^h y_1^2 ) \right] - 1\right)  f_{\underline{Y}}(\underline{y}) d\underline{y}\\
&= \frac12 \left(\E\exp\left[O((2\rho^2)^h Y_{u_1}^2 ) \right] - 1\right)\\
&= O((2\rho^2)^h),
\end{align*}
where the first inequality follows from equation~\eqref{e:inverseMatrixDiff} while the second follows from an application of the AM-GM inequality and fact that the $Y_{u_i}$ are identically distributed.  The final equality follows from an expansion of equation~\eqref{e:GaussianSecondMoment}.

It follows that when $k=o((2\rho^2)^{-h})$ we can couple $\{ Y_u^i \}_{u\in U,i\in\{1,\ldots,k\}}$ and $\{ \tilde Y_u^i \}_{u\in U,i\in\{1,\ldots,k\}}$ with  probability $(1-O((2\rho^2)^h))^k$ which tends to 1.  Since they form a sufficient statistic for the top quartet, this top structure of the graph cannot be recovered with probability approaching 1.  Recalling that $n=2^h$, $\rho=e^{-\tau}$ and that if $\gamma<(2\tau)/\log 2 - 1$ then $\gmrft^{f,g}$ is not solvable with $k=n^\gamma =o((2\rho^2)^{-h})$ samples.
\end{proof}

\section{GTR Model with Unknown Rate Matrix}
\label{section:gtr}

In this section, we prove our reconstruction in the GTR
case.
We only describe the hidden-state estimator as the
other steps are the same.
We use notation similar to Section~\ref{section:gaussian}.
We denote the tree by $T=(V,E)$ with root $\rt$. The number of leaves is denoted by $n$.
Let $\nstates \geq 2$, $0 < f < g < +\infty$,
and $\phy = (V,E,[n],\rt;\weight) \in \sphy^{f,g}$.
Fix $Q \in \rates_\nstates$.
We assume that
$ 0 < g < \critks = \ln \sqrt{2}$.
We generate $k$ i.i.d.~samples $(\ss^i_{V})_{i=1}^k$ from the GTR model
$(\phy, Q)$
with state space $\states$.
Let $\eigenvec{2}$ be a second right
eigenvector of $Q$, that is, an eigenvector
with eigenvalue $-1$. We will use the notation
$\s^i_u = \eigenvec{2}_{\ss^i_u}$, for all $u\in V$ and $i=1,\ldots,k$.
We shall denote the leaves of $T$ by $[n]$.

\subsection{Estimating Rate and Frequency Parameters}\label{section:rate-matrix}

We discuss in this
section the issues involved in estimating $Q$ and its eigenvectors using data at the leaves.
For the purposes of our algorithm we need only estimate
the first left eigenvector and the second right eigenvector.
Let $\pi$ be the stationary distribution of $Q$ (first left eigenvector) and denote
$\Pi = \diag(\pi)$.
Let
\begin{equation*}
\eigenvec{1}, \eigenvec{2}, \ldots, \eigenvec{\nstates},
\end{equation*}
be the right eigenvectors of $Q$ corresponding respectively to eigenvalues
\begin{equation*}
0 = \eigenval{1}> \eigenval{2}\geq \ldots \geq \eigenval{\nstates}.
\end{equation*}
Because of the reversibility assumption, we can choose the eigenvectors to be
orthonormal with respect to the inner product,
\begin{equation*}
\langle \evr , \evr' \rangle_\pi = \sum_{i\in\states} \pi_i \evr_i \evr'_i
\end{equation*}
In the case of multiplicity of eigenvalues this description may not be unique.
\begin{proposition}\label{prop:rateMatrixApprox}
There exists $\kappa(\epsilon,\eigenratio,Q)$ such that given $\kappa\log n$ samples there exist estimators $\hat\pi$ and $\eeigenvec{2}$ such that 
\begin{equation}\label{e:piHatApprox}
\parallel \pi-\hat\pi\parallel\leq \epsilon,
\end{equation}
and
\begin{equation}\label{e:eigenapprox}
\eeigenvec{2} = \sum_{l=1}^{\nstates} \alpha_l \evr^l,
\end{equation}
where $|\alpha_2 - 1| \leq \eps$ and $|\frac{\alpha_l}{\alpha_2} | < \eigenratio$ for $l \geq 3$, (for some choice of $\evr^l$ if the second eigenvalue has multiplicity greater than 1).
\end{proposition}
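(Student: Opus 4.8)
The plan is to estimate $Q$ (or rather the quantities we actually need from it) via a standard moment/empirical-covariance computation on pairs of leaves, and then control the propagation of estimation error through the spectral decomposition. First I would fix a pair of leaves $a,b$ whose path length $\weight(a,b)$ is bounded (such a pair exists since $g < \infty$ and, at the bottom level, we can take a cherry). The joint law of $(\ss_a,\ss_b)$ is governed by the matrix $M^{(a,b)} = e^{\weight(a,b)Q}$ together with $\Pi = \diag(\pi)$: explicitly, $\prob[\ss_a = i, \ss_b = j] = \pi_i M^{(a,b)}_{ij}$. From $k = \kappa \log n$ i.i.d.\ samples I would form the empirical estimate $\widehat{F}^{(a,b)}$ of this $\nstates\times\nstates$ joint-frequency matrix, and the empirical marginal $\hat\pi$ of $\ss_a$. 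A Chernoff/Hoeffding bound gives that, with probability $1 - O(n^{-\gamma})$ for $\kappa$ large enough, every entry of $\widehat F^{(a,b)}$ and of $\hat\pi$ is within (say) $\eps'$ of its true value, which immediately yields \eqref{e:piHatApprox}.

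Next I would recover an approximate second right eigenvector. The matrix $\Pi^{-1} \widehat F^{(a,b)}$ is an empirical version of $M^{(a,b)} = e^{\weight(a,b) Q}$, which is reversible with respect to $\pi$ and hence (conjugating by $\Pi^{1/2}$) similar to a symmetric matrix; its eigenvalues are $e^{\weight(a,b)\eigenval{l}}$ with the same eigenvectors $\evr^l$ as $Q$. So I would symmetrize: set $\widehat A = \hat\Pi^{1/2} \hat\Pi^{-1}\widehat F^{(a,b)} \hat\Pi^{-1/2} = \hat\Pi^{-1/2}\widehat F^{(a,b)}\hat\Pi^{-1/2}$, which is within $O(\eps')$ (entrywise, hence in operator norm) of the true symmetric matrix $\Pi^{-1/2} F^{(a,b)} \Pi^{-1/2}$ whose second-largest eigenvalue is $e^{\weight(a,b)\eigenval{2}} = e^{-\weight(a,b)}$ with eigenvector $\Pi^{1/2}\evr^2$. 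Taking the eigenvector of $\widehat A$ associated with its second-largest eigenvalue and multiplying back by $\hat\Pi^{-1/2}$ gives the candidate $\eeigenvec{2}$; after rescaling so that its $\evr^2$-component $\alpha_2$ satisfies $|\alpha_2 - 1|\le\eps$, expanding in the orthonormal basis $\{\evr^l\}$ gives \eqref{e:eigenapprox}, and the bound $|\alpha_l/\alpha_2| < \eigenratio$ for $l\ge 3$ follows from the eigenvector perturbation estimate once $\eps'$ is chosen small relative to the spectral gaps of $Q$ (which depend only on $Q$, hence are absorbed into $\kappa(\eps,\eigenratio,Q)$).

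The main obstacle is the case of repeated eigenvalues: if $\eigenval{2}$ has multiplicity greater than $1$, the eigenvector is not unique and naive eigenvector perturbation bounds (Davis–Kahan) only control the angle to the whole eigenspace, not to any single vector. This is exactly why the statement allows "for some choice of $\evr^l$" — so I would handle it by applying the $\sin\Theta$ theorem to the \emph{eigenspace} $W_2$ of $M^{(a,b)}$ of dimension equal to that multiplicity: the computed $\eeigenvec{2}$ is then within $O(\eps')$ of \emph{some} unit vector in $W_2$, which we are free to declare to be $\evr^2$, and we complete it to an orthonormal basis of $W_2$; the components along the \emph{other} eigenspaces are $O(\eps')$ and thus small relative to $\alpha_2 \approx 1$, giving $|\alpha_l/\alpha_2| < \eigenratio$ for the chosen basis. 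A secondary technical point is ensuring the eigenvalues of $\widehat A$ are well-separated enough to identify "the second one" unambiguously — but since $\eigenval{2}$ and $\eigenval{3}$ are separated by a gap depending only on $Q$, and $e^{\weight(a,b)\cdot}$ is monotone with $\weight(a,b)$ bounded, the perturbed gap is still bounded below, so this causes no difficulty for $\kappa$ large.
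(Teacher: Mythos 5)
Your approach is essentially the same as the paper's: form the empirical joint distribution $\widehat F^{(a,b)}$ at a close leaf pair, conjugate by $\hat\Pi^{\pm 1/2}$ to get an approximately symmetric matrix, and extract the second eigenvector with a matrix-perturbation argument (the paper cites Golub--Van Loan where you cite Davis--Kahan; either works, and your explicit discussion of the repeated-eigenvalue case via the $\sin\Theta$ theorem is the right way to handle the ``for some choice of $\evr^l$'' caveat). The parenthetical ``after rescaling so that its $\evr^2$-component $\alpha_2$ satisfies $|\alpha_2-1|\le\eps$'' and the bound on $|\alpha_l/\alpha_2|$ via the perturbation estimate matches the paper's intent.

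The genuine gap is the selection of the pair $(a,b)$. You write that ``such a pair exists since $g<\infty$ and, at the bottom level, we can take a cherry,'' but at the point where $Q$ is being estimated the topology is not yet known, so you cannot identify a cherry, and an arbitrary pair in a balanced tree has path length $\Theta(\log n)$, for which the second eigenvalue $e^{-\weight(a,b)}$ of $\Pi^{1/2}e^{\weight(a,b)Q}\Pi^{-1/2}$ is polynomially small in $n$ and swamped by the $O(1/\sqrt{\kappa\log n})$ sampling error, so no useful eigenvector estimate comes out. The estimator must choose the pair from the data. The paper does this by taking $(a^*,b^*)\in\arg\min\{-\log\det\widehat F^{ab}\}$, using that $-\log\det F^{ab}=-\log\det\Pi + \weight(a,b)\,|\mathrm{tr}(Q)|$ is an increasing affine function of the path length, so the minimizer is a closest pair up to sampling error. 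Without some such data-driven rule the estimator in your proof is not well-defined. (There is also a minor technical omission: you apply $\hat\Pi^{-1/2}\cdot\hat\Pi^{-1/2}$ to the raw $\widehat F^{(a,b)}$, which is not exactly symmetric, so $\widehat A$ could have complex eigenpairs; the paper avoids this by first symmetrizing $\widehat F^\dagger=\tfrac12(\widehat F+\widehat F^\top)$. Easy to fix, but worth stating.)
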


\paragraph{Estimates}
Let $\widehat F$ denote the empirical joint distribution at leaves $a$ and $b$ as a $\nstates\times\nstates$ matrix.
(We use an extra sample block for this estimation.)
To estimate $\pi$ and $\eigenvec{2}$, our first task is to find two leaves that are sufficiently close
to allow accurate estimation.
Let $a^*,b^* \in [n]$ be two leaves with minimum log-det distance
\begin{equation*}
(a^*, b^*) \in \arg\min \left\{-\log\det\widehat{F}^{ab}\ :\ (a,b) \in [n]\times [n]\right\}.
\end{equation*}
Let
\begin{equation*}
F = F^{a^* b^*},
\end{equation*}
and consider the symmetrized correlation matrix
\begin{equation*}
\widehat{F}^\dagger = \frac{1}{2}(\widehat{F}^{a^* b^*} + (\widehat{F}^{a^* b^*})^\top).
\end{equation*}
Then we estimate $\pi$ from
\begin{equation*}
\hat\pi_\upsilon = \sum_{\upsilon' \in \states} \widehat{F}^\dagger_{\upsilon \upsilon'},
\end{equation*}
for all $\upsilon \in \states$. Denote $\widehat\Pi = \diag(\hat\pi)$.
By construction, $\hat\pi$ is a probability distribution.
Let $\varphi = \weight(a^*, b^*)$ and define $G$ to be the symmetric matrix
\begin{equation*}
G = \Pi^{-1/2}F\Pi^{-1/2} = \Pi^{-1/2}(\Pi e^{\varphi Q})\Pi^{-1/2} = \Pi^{1/2} e^{\varphi Q}\Pi^{-1/2}.
\end{equation*}
Then denote the right eigenvectors
of $G$ as
\begin{equation*}
\edgeeigenvec{1} = \Pi^{1/2} \eigenvec{1}, \edgeeigenvec{2} = \Pi^{1/2}\eigenvec{2}, \ldots, \edgeeigenvec{\nstates} = \Pi^{1/2}\eigenvec{\nstates},
\end{equation*}
with corresponding eigenvalues
\begin{equation*}
1 = \edgeeigenval{1}_{(a^*,b^*)} = e^{\varphi\eigenval{1}} > \edgeeigenval{2}_{(a^*,b^*)}  = e^{\varphi\eigenval{2}}
\geq \cdots \geq \edgeeigenval{\nstates}_{(a^*,b^*)}  = e^{\varphi\eigenval{\nstates}},
\end{equation*}
orthonormal with respect to the Euclidean inner product.
Note that $\edgeeigenval{2}_{(a^*,b^*)}   < e^{-f}$
and that $\eigenvec{1}$ is the all-one vector.  Assuming $\hat\pi > 0$, define
\begin{equation*}
\widehat{G} = \widehat{\Pi}^{-1/2}\widehat{F}^\dagger\widehat{\Pi}^{-1/2}.
\end{equation*}
which we use to estimate the eigenvectors and eigenvalues of $Q$.  Since $\widehat{G}$ is real symmetric,
it has $\nstates$ real eigenvalues
$
\eedgeeigenval{1}> \eedgeeigenval{2}\geq \ldots \geq \eedgeeigenval{\nstates}.
$
with a corresponding orthonormal basis
$
\eedgeeigenvec{1}, \eedgeeigenvec{2}, \ldots, \eedgeeigenvec{\nstates}.
$
It can be checked that, provided $\widehat{G} > 0$,
we have $1 = \eedgeeigenval{1} >  \eedgeeigenval{2} $.
We use
\begin{equation*}
\eeigenvec{2} = \widehat{\Pi}^{-1/2} \eedgeeigenvec{2}.
\end{equation*}
as our estimate of the
``second eigenvector'' and $\eedgeeigenval{2} $ as our estimate of the second eigenvalue of the channel.

\paragraph{Discussion}
The sensitivity of eigenvectors is somewhat delicate~\cite{HornJohnson:85}.
With sufficiently many samples ($k=\kappa \log n$ for large enough $\kappa$)
the estimator $\widehat{G}$ will approximate $G$ within any constant tolerance.
When the second eigenvalue is distinct from the third one our estimate will satisfy
\eqref{e:eigenapprox} provided $\kappa$ is large enough.

If there are multiple second eigenvectors the vector $\eeigenvec{2}$ may not exactly be an estimate of $\eigenvec{2}$
since indeed the second eigenvalue is not uniquely defined:
using classical results (see e.g.~\cite{GolubVanLoan:96}) it can be shown
that $\eeigenvec{2} $ is close to a combination of eigenvectors
with eigenvalues equal to $\edgeeigenval{2} $.  Possibly after passing to a different basis of eigenvectors
$\eigenvec{1}, \eigenvec{2}, \ldots, \eigenvec{\nstates}$, we still have that equation \eqref{e:eigenapprox} holds.  By standard large deviations estimate this procedure satisfies Proposition~\ref{prop:rateMatrixApprox} when $\kappa$ is large enough.

\begin{remark}
This procedure provides arbitrary accuracy as $\kappa$ grows, however, for fixed $\kappa$ it will not in general go to 0 as
$n$ goes to infinity as the choice of $a^*,b^*$ may bias the result.  An error of size $O(1/\sqrt{k})$ may be obtained by taking all
pairs with log-det distance below some small threshold (say $4g$), randomly picking such a pair $a',b'$ and estimating the matrix
$\widehat{G}$ using $a',b'$.

We could also have estimated $\hat\pi$ by taking the empirical distribution of the states at one of the vertices or indeed the empirical distribution over all vertices.
\end{remark}

\subsection{Recursive Linear Estimator}

As in the Gaussian case, we build a recursive
linear estimator.
We use notation similar to Section~\ref{section:gaussian}.
Let $K = \kappa \log n$ be the size of each block.
We let $\ss_V$ be a generic sample from
the GRT model independent of
everything else, and we define
$\s_u = \eeigenvec{2}_{\ss_u}$
for all $u\in V$.
We let $(\ss_{[n]}^i)_{i=1}^K$
be a block of independent samples at the leaves,
and we set $\s_u^\ell = \eeigenvec{2}_{\ss_u^\ell}$,
for all $u\in V$ and $\ell = 1,\ldots,K$.
For a node $u \in V$, we let $\desc{u}$ be the
leaves below $u$ and $\sb{u}$, the corresponding
state.
Let $0 < \delta < 1$ (small) and $c > 1$ (big) be constants to be defined later.

\paragraph{Linear estimator}
We build a linear estimator for each of the vertices recursively from the leaves.
Let $x\in V-[n]$ with children (direct descendants)
$y_1,y_2$.
Assume that the topology of the tree rooted at $x$
has been correctly reconstructed.
Assume further that we have constructed linear
estimators
$$S_u \equiv \lin_u(\sb{u})$$
of $\s_u$, for all $u \in V$ below $x$.
We use the convention that
$$
\lin_u(\sb{u}) = \s_u
$$
if $u$ is a leaf.
We let $\lin_x$ be a linear
combination of the form
\begin{equation}\label{eq:linearRobust}
S_x \equiv \lin_x(\sb{x}) =
\omega_{y_1} \lin_{y_1}(\sb{y_1}) +
\omega_{y_2} \lin_{y_2}(\sb{y_2}),
\end{equation}
where the $\omega$'s are chosen below.

\paragraph{Recursive conditions}
Assume that we have linear
estimators $\lin_{u}$ for all $u$ below $x$ satisfying
\begin{equation}\label{eq:biasguaranteeRobust}
\expec[S_{u} \,|\, \ss_{u}]
= \sum_{l=1}^{\nstates}
\bbias^l(u) \eigenvec{l}_{\ss_{u}},
\end{equation}
for some $\bbias^l(u)$ such that  $|\bbias^2(u) - 1| < \delta$ and $| \bbias^l(u) / \bbias^2(u)|<\eigenratio$
for $l = 3,\ldots,\nstates$.  Note that no condition is placed on $\bbias^1(u)$.  Further for all $i\in\states$
\begin{equation}\label{eq:expmomguaranteeRobust}
\Gamma_{u}^i(\zeta)
\leq \zeta \expec[S_{u}\,|\,\ss_{u} = i] + c \zeta^2,
\end{equation}
where as before
$$
\Gamma_u^i(\zeta) \equiv \ln\expec[\exp(\zeta S_u)\,|\,\ss_u = i].
$$
Observe that these conditions
are satisfied at the leaves.
Indeed, for $u \in [n]$ one has
$S_{u}= \eeigenvec{2}_{\ss_{u}}
= \sum_{l=1}^{\nstates} \alpha_l \eigenvec{l}_{\ss_u}$ and therefore
$\expec[S_{u}\,|\,\ss_{u}]
=   \sum_{l}^{\nstates} \alpha_l \eigenvec{l}_{\ss_u}$
and $\Gamma_{u}^i(\zeta) =\zeta \expec[S_{u}\,|\,\ss_{u} = i] $.
We now seek to construct $S_x$ so that it in turn satisfies the same conditions.

Moreover we assume we have a priori estimated edge weights $\eweight_{e}$ for all $e$ below $x$
such that for $\eps > 0$
we have that
\begin{equation}\label{eq:weightassumptionRobust}
|\eweight_{e} - \weight_{e}| < \eps.
\end{equation}
Let $\hat\theta_{e} = e^{-\eweight_{e}}$.

\paragraph{First eigenvalue adjustment}
As discussed above, because we cannot estimate exactly
the second eigenvector, our estimate $\eeigenvec{2}$ may contain components of other eigenvectors.
While eigenvectors $\eigenvec{3}$ through $\eigenvec{\nstates}$ have smaller eigenvalues and will
thus decay in importance as we recursively construct our estimator, the presence of a component
in the direction of the first eigenvalue poses greater difficulties.
However, we note that $\eigenvec{1}$ is identically 1.  So to remove the effect
of the first eigenvalue from equation \eqref{eq:biasguaranteeRobust} we subtract the empirical mean of $S_u$,
\[
\bar S_u =  \frac{1}{K} \sum_{\ell=1}^K S^\ell_{u}.
\]
As $\langle \pi , \eigenvec{l}\rangle = 0$ for $l=2,\ldots,\nstates$ and $\eigenvec{1}\equiv 1$ we have that
$\expec S_u = \bbias^1(u)$
from (\ref{eq:biasguaranteeRobust})
and hence the following proposition follows from standard large deviations estimates.
\begin{proposition}[Concentration of Empirical Mean]\label{proposition:conMean}
For $u \in V$, $\eps' > 0$ and $\gamma>0$, suppose that conditions \eqref{eq:biasguaranteeRobust} and \eqref{eq:expmomguaranteeRobust} hold for some $\delta,\eps$ and $c$.  Then there exists $\kappa = \kappa(\eps', c,\gamma,\delta,\eps) > 0$ such that, when we have $K \geq \kappa \log n$
then
\begin{equation*}
|\bar S_u - \bbias^1(u)| < \eps',
\end{equation*}
with probability at least $1 - O(n^{-\gamma})$.
\end{proposition}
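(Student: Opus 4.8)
The plan is to run a routine Chernoff-bound argument, extracting from the exponential-moment hypothesis \eqref{eq:expmomguaranteeRobust} a uniform sub-Gaussian control on $S_u$ and then passing to the empirical average over $K$ i.i.d.\ copies. First I would record the conditional mean. Set $m_i = \expec[S_u \mid \ss_u = i] = \sum_{l=1}^{\nstates}\bbias^l(u)\,\eigenvec{l}_i$. Since $\eigenvec{1}\equiv 1$ and $\langle\pi,\eigenvec{l}\rangle = 0$ for $l\geq 2$, averaging over $\ss_u\sim\pi$ gives $\expec[S_u] = \expec[m_{\ss_u}] = \bbias^1(u)$, as already observed in the text; moreover $m_i - \bbias^1(u) = \sum_{l=2}^{\nstates}\bbias^l(u)\,\eigenvec{l}_i$, and from the recursive guarantees $|\bbias^2(u)-1|<\delta$ and $|\bbias^l(u)/\bbias^2(u)|<\eigenratio$ together with the boundedness of the entries of the true eigenvectors of $Q$, there is a constant $M = M(\delta,\eigenratio,\nstates,Q)$ with $|m_i - \bbias^1(u)|\leq M$ for all $i\in\states$. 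Thus $m_{\ss_u}-\bbias^1(u)$ is mean-zero and takes values in $[-M,M]$.

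Next I would bound the moment generating function of the centered variable. Conditioning on $\ss_u$ and applying \eqref{eq:expmomguaranteeRobust},
\[
\expec\!\big[e^{\zeta(S_u-\bbias^1(u))}\big]
= \expec\!\Big[e^{-\zeta\bbias^1(u)}\,\expec\big[e^{\zeta S_u}\mid\ss_u\big]\Big]
\leq e^{c\zeta^2}\,\expec\!\big[e^{\zeta(m_{\ss_u}-\bbias^1(u))}\big]
\leq e^{(c+M^2/2)\zeta^2},
\]
for all $\zeta\in\real$, where the last step is Hoeffding's lemma applied to the bounded mean-zero variable $m_{\ss_u}-\bbias^1(u)$. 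Writing $c' = c + M^2/2$, the variable $S_u - \bbias^1(u)$ is therefore sub-Gaussian with parameter $c'$ depending only on $c,\delta,\eigenratio,\nstates,Q$.

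I would then apply this to $\bar S_u$. Conditioning on the sample blocks used at earlier levels — with respect to which $\lin_u$ is a fixed linear functional, independent of the current block, as in the remark in Section~\ref{section:recursive} — the values $S_u^\ell = \lin_u(\sb{u}^\ell)$, $\ell=1,\ldots,K$, are i.i.d.\ copies of $S_u$, so $\expec[e^{\zeta(\bar S_u-\bbias^1(u))}] = \prod_{\ell=1}^K\expec[e^{(\zeta/K)(S_u^\ell-\bbias^1(u))}]\leq e^{c'\zeta^2/K}$. Optimizing the Markov bound over $\zeta$ (take $\zeta = \pm\,\eps' K/(2c')$) yields
\[
\prob\!\big[\,|\bar S_u - \bbias^1(u)| > \eps'\,\big]\ \leq\ 2\exp\!\Big(-\frac{\eps'^2\,K}{4c'}\Big).
\]
With $K\geq\kappa\log n$ the right-hand side is at most $2\exp(-\eps'^2\kappa(\log n)/(4c'))$, which is $O(n^{-\gamma})$ as soon as $\kappa\geq 4c'\gamma/\eps'^2$; since $\eigenratio,\nstates,Q$ are fixed throughout the section, such a choice $\kappa = \kappa(\eps',c,\gamma,\delta,\eps)$ is admissible, and the proof is complete.

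I expect no genuine obstacle here: the only point needing a little care is extracting the uniform sub-Gaussian constant $c'$ for $S_u$ from the per-state hypothesis \eqref{eq:expmomguaranteeRobust}, i.e., verifying that the spread $m_i - \bbias^1(u)$ of the conditional means is bounded uniformly in $u$ and $i$ using the recursive bias bounds and the boundedness of the eigenvector entries of $Q$; everything after that is the textbook Chernoff bound for an average of i.i.d.\ sub-Gaussian variables.
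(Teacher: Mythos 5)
Your proof is correct, and it takes a genuinely different (and in my view cleaner) route than the paper. The paper's proof conditions on the hidden state at $u$: it first uses a Chernoff bound to control the empirical frequencies $\widehat K_i/K$ of each state $i$, then uses \eqref{eq:expmomguaranteeRobust} and Markov's inequality to control the conditional average of $S^\ell_u - \bbias^1(u)$ over the samples with $\ss^\ell_u = i$, and finally recombines these per-state averages weighted by the empirical frequencies, producing the bound $|\bar S_u - \bbias^1(u)| \leq q\eps_\Gamma + \eps_\pi[\Upsilon + \eps_\Gamma]$. You instead integrate out the state before applying concentration: from the per-state sub-Gaussian hypothesis plus Hoeffding's lemma applied to the bounded, mean-zero spread $m_{\ss_u} - \bbias^1(u)$ (whose boundedness the paper also uses, via the same quantity $\Upsilon$), you derive an unconditional sub-Gaussian bound $\expec[e^{\zeta(S_u - \bbias^1(u))}] \leq e^{c'\zeta^2}$ with $c' = c + M^2/2$, and then the standard Chernoff bound for i.i.d.\ sub-Gaussian averages finishes the job in one step. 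Your version avoids the slight awkwardness in the paper of handling the random count $\widehat K_i$ (which the paper treats somewhat informally), and it delivers a single explicit sub-Gaussian constant for $S_u - \bbias^1(u)$ that could be reused elsewhere; the paper's version, conversely, keeps the per-state decomposition visible, which is stylistically consistent with how they handle the state variable in Proposition~\ref{proposition:concentrationRobust}. Both give the claimed $1 - O(n^{-\gamma})$ with $\kappa$ depending only on $\eps', c, \gamma, \delta, \eps$ (plus $\eigenratio$ and $Q$, which are fixed), and both correctly invoke the independence remark that $\lin_u$ is measurable with respect to earlier sample blocks.
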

\begin{proof}
Let $\eps_\pi > 0$.
By Chernoff's bound,
of the $K$ samples, $\widehat K_i$ are such that
$\ss_u^\ell = i$ where
$$
\left|\frac{\widehat{K}_i}{K} - \pi_i\right|
\leq \eps_\pi,
$$
except with inverse polynomial probability,
given that $\kappa$ is large enough.
By
\eqref{eq:biasguaranteeRobust} and
\eqref{eq:expmomguaranteeRobust},
we have
$$
\E[e^{\zeta(S_u - \bbias^1(u))}|\ss_u = i]
\leq \zeta \E[(S_u - \bbias^1(u))|\ss_u = i]
+ c \zeta^2,
$$
where
$$
\left|
\E[(S_u - \bbias^1(u))|\ss_u = i]
\right|
=
\left|
\sum_{l=2}^{\nstates}
\bbias^l(u) \eigenvec{l}_{i}
\right|
\leq
(1+\delta)(1+q\eigenratio)\max_j 1/\sqrt{\pi_j}
\equiv \Upsilon.
$$
Let $\eps_\Gamma > 0$.
Choosing $\zeta = \frac{\eps_\Gamma}{2c}$
in Markov's inequality for $e^{\zeta (S_u - \bbias^1(u))}$
gives that the average of $S_u^\ell - \bbias^1(u)$
over the samples with $\ss_u^\ell = i$ is
within $\eps_\Gamma$ of
$\sum_{l=2}^{\nstates}
\bbias^l(u) \eigenvec{l}_{i}$
except with probability at most $e^{-\eps_\Gamma^2
K(\pi_i - \eps_\pi)/4c}
= 1/\poly(n)$ for $\kappa$ large enough
and $\eps_\pi$ small enough.
Therefore, in that case,
\begin{eqnarray*}
\left|
\frac{1}{K} \sum_{\ell=1}^K (S^\ell_{u} - \bbias^1(u))
\right|
&\leq&
q\eps_\Gamma + \eps_\pi[\Upsilon + \eps_\Gamma] < \eps',
\end{eqnarray*}
for $\eps_\pi, \eps_\Gamma$ small enough,
where we used $\langle \pi , \eigenvec{l}\rangle = 0$ for $l=2,\ldots,\nstates$.
\end{proof}

For $\alpha=1,2$, using the Markov property we have the following important conditional moment identity which we will use to relate the bias at $y_\alpha$ to the bias at $x$,
\begin{align}\label{e:conditionalMeanRobust}
\expec \left( S^\ell_{y_\alpha} - \bbias^1(y_\alpha) \mid \ss_{x} = i \right) &=  \sum_{l=2}^\nstates \sum_{j=1}^\nstates \bbias^l(y_\alpha) M^{e_\alpha}_{ij}  \eigenvec{l}_{j}\nonumber \\
&=  \sum_{l=2}^\nstates  \bbias^l(y_\alpha)  \edgeeigenval{l}_{e_\alpha} \eigenvec{l}_{i},
\end{align}
where we used the fact that the $\eigenvec{l}$'s are eigenvectors of $M^{e_\alpha}_{ij}$ with eigenvectors $\edgeeigenval{l}_e = \exp(-\eigenval{l}\weight_e)$.

\paragraph{Procedure}
We first define a procedure for estimating the path length
(that is, the sum of edge weights)
between a pair of vertices $u_1$ and $u_2$ including the bias.  For $u_1, u_2 \in V$ with common ancestor $v$ we define
\begin{equation*}
\edist(u_1,u_2) = -\ln\left(\frac{1}{K} \sum_{\ell=1}^K \left( S^\ell_{u_1} - \bar S_{u_1}\right)  \left( S^\ell_{u_2} - \bar S_{u_2}\right) \right).
\end{equation*}
This estimator differs from Section \ref{section:recursive} in that we subtract the empirical means to remove the effect of the first eigenvalue.  Using the fact that $\sum_{\ell=1}^k  S^\ell_{u_1} - \bar S_{u_1} = 0$ and Proposition \ref{proposition:conMean} we have that with probability at least $1 - O(n^{-\gamma})$
\begin{eqnarray*}
&&\frac{1}{K} \sum_{\ell=1}^K \left( S^\ell_{u_1} - \bar S_{u_1}\right)  \left( S^\ell_{u_2} - \bar S_{u_2}\right) \\
&& \qquad =  \frac{1}{K} \sum_{\ell=1}^K \Big[\left( S^\ell_{u_1} - \bbias^1(u_1) \right)  \left( S^\ell_{u_2} - \bbias^1(u_2) \right)\\
&&\qquad\qquad +  \left( \bar S^\ell_{u_1} - \bbias^1(u_1) \right)  \left(\bar S^\ell_{u_2} - \bbias^1(u_2) \right)\Big]\nonumber \\
&& \qquad \leq  \frac{1}{K} \sum_{\ell=1}^K \left( S^\ell_{u_1} - \bbias^1(u_1) \right)  \left( S^\ell_{u_2} - \bbias^1(u_2) \right) + (\eps')^2,
\end{eqnarray*}
and similarly the other direction so,
\begin{eqnarray}\label{e:empiricalMeanSub}
&&\bigg|\frac{1}{K} \sum_{\ell=1}^K \left( S^\ell_{u_1} - \bar S_{u_1}\right)  \left( S^\ell_{u_2} - \bar S_{u_2}\right) \nonumber\\
&& \qquad - \frac{1}{K} \sum_{\ell=1}^K \left( S^\ell_{u_1} - \bbias^1(u_1) \right)  \left( S^\ell_{u_2} - \bbias^1(u_2) \right) \bigg| \leq (\eps')^2.
\end{eqnarray}
It follows that $\edist(u_1,u_2)$ is an estimate of the length between $u_1$ and $u_2$ including bias since
\begin{align}\label{e:covariance1}
&\expec \left[\left( S^\ell_{u_1} - \bbias^1(u_1) \right)  \left( S^\ell_{u_2} - \bbias^1(u_2) \right)\right]\nonumber\\
& \qquad = \sum_{i\in \states } \pi_i  \expec \left( S^\ell_{u_1} - \bbias^1(u_1) \mid \ss_{v} = i \right) \expec \left( S^\ell_{u_2} - \bbias^1(u_2) \mid \ss_{v} = i \right) \nonumber\\
&\qquad = \sum_{i\in \states } \pi_i \left (\sum_{l=2}^\nstates  \bbias^l(u_1)  \edgeeigenval{l}_{(v,u_1)} \eigenvec{l}_{j}\right)\left( \sum_{l=2}^\nstates  \bbias^l(u_2)  \edgeeigenval{l}_{(v,u_2)} \eigenvec{l}_{j}\right) \nonumber\\
&\qquad=  \bbias^2(u_1)  \edgeeigenval{2}_{(v,u_1)}  \bbias^2(u_2)  \edgeeigenval{2}_{(v,u_2)} + O(\eigenratio)\nonumber\\
&\qquad=  \bbias^2(u_1) \bbias^2(u_2) e^{-\tau(u_1,u_2)} + O(\eigenratio),
\end{align}
where line 2 follows from equation \eqref{e:conditionalMeanRobust}. Above we also used the recursive assumptions  and the fact that $\sum_{i\in \states} \pi_i (\eigenvec{2}_i)^2=1$.
We will use the estimator $\edist(u,v) $ to estimate $\bias(u) = -\ln \bbias^2(u)$.  Given the previous setup, we choose the weights $\omega_{y_\alpha}$, $\alpha=1,2$, as follows:
\begin{enumerate}
\item {\bf Estimating the Biases.}
If $y_1, y_2$ are leaves, we let $\ebias(y_\alpha) = 0$, $\alpha=1,2$.
Otherwise, let $z_{\alpha 1},z_{\alpha 2}$ be the children of $y_{\alpha}$. We then compute
\begin{equation*}
\ebias(y_1) = \frac{1}{2}(\edist(y_1, z_{21}) + \edist(y_1, z_{22}) - \edist(z_{21}, z_{22}) - 2\eweight_{e_1} - 2\eweight_{e_2}),
\end{equation*}
And similarly for $y_2$.
Let $\ebbias^2(y_\alpha) = e^{-\ebias(y_\alpha)}$, $\alpha=1,2$.

\item {\bf Minimizing the Variance.}
Set $\omega_{y_\alpha}, \alpha=1,2$ as
\begin{equation}\label{eq:solutionRobust}
\omega_{y_\alpha} = \frac{\ebbias^2(y_\alpha)\edgeeigenval{2}_{e_\alpha}}{(\ebbias^2(y_1))^2(\edgeeigenval{2}_{e_1})^2 + (\ebbias^2(y_2))^2(\edgeeigenval{2}_{e_2})^2},
\end{equation}
the solution of the following optimization problem:
\begin{equation}\label{eq:optimizationRobust}
\min\{\omega_{y_1}^2 + \omega_{y_2}^2 \ :\ \omega_{y_1} \ebbias^2(y_1) \edgeeigenval{2}_{e_1} +  \omega_{y_2} \ebbias^2(y_2) \edgeeigenval{2}_{e_2} = 1,\ \omega_{y_1},\omega_{y_2} > 0 \}.
\end{equation}
The constraint above guarantees that the bias condition (\ref{eq:biasguaranteeRobust}) is satisfied when we set
\begin{equation*}
\lin_x(\sb{x}) = \omega_{y_1} \lin_{y_1}(\sb{y_1})
+ \omega_{y_2} \lin_{y_2}(\sb{y_2}).
\end{equation*}

\end{enumerate}

\paragraph{Bias and Exponential Moment}
We now prove (\ref{eq:biasguaranteeRobust}) and (\ref{eq:expmomguaranteeRobust}) recursively assuming
(\ref{eq:weightassumptionRobust}) is satisfied.
Assume the setup of the previous paragraph. We already argued that
(\ref{eq:biasguaranteeRobust}) and (\ref{eq:expmomguaranteeRobust}) are satisfied at the leaves.
Assume further that they are satisfied for all descendants of $x$.
We first show that the $\edist$-quantities are concentrated.
\begin{proposition}[Concentration of Internal Distance Estimates]\label{proposition:concentrationRobust}
For all $\eps > 0$, $\gamma > 0$, $0 < \delta < 1$ and
$c > 0$, there are $\kappa = \kappa(\eps,\gamma,\delta,c) > 0$, $\eigenratio = \eigenratio(\eps,\gamma,\delta,c) > 0$ such that,
with probability at least $1 - O(n^{-\gamma})$, we have
\begin{equation*}
|\edist(u,v) - (\weight(u,v) + \bias(u) + \bias(v))| < \eps,
\end{equation*}
for all $u,v \in \{y_1,y_2,z_{11},z_{12},z_{21},z_{22}\}$ where $z_{\alpha 1}, z_{\alpha 2}$ are the children
of $y_\alpha$.
\end{proposition}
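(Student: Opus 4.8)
The plan is to adapt the proof of Proposition~\ref{proposition:concentration} from the Gaussian case, making the two changes forced by the GTR setting: accounting for the empirical-mean recentering built into $\edist$, and controlling the residual higher-eigenvector components that appear in \eqref{e:covariance1}. Since $u,v$ range over a fixed set of size at most six, it suffices to prove the estimate for a single pair and finish with a union bound. First I would condition on the high-probability event of Proposition~\ref{proposition:conMean}, applied with a small tolerance $\eps'$ to each of the $O(1)$ vertices in play; on this event, \eqref{e:empiricalMeanSub} tells us that the expression inside the logarithm defining $\edist(u,v)$ differs by at most $(\eps')^2$ from $A := \frac1K\sum_{\ell=1}^K\bigl(S^\ell_u - \bbias^1(u)\bigr)\bigl(S^\ell_v - \bbias^1(v)\bigr)$, whose expectation equals $\bbias^2(u)\bbias^2(v)e^{-\weight(u,v)} + O(\eigenratio) = e^{-(\weight(u,v)+\bias(u)+\bias(v))} + O(\eigenratio)$ by \eqref{e:covariance1} and the identity $\bias(w) = -\ln\bbias^2(w)$. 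So the remaining probabilistic work is to show that $A$ concentrates around its mean.

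The $K$ summands of $A$ are i.i.d., one per independent sample block. By the recursive hypothesis \eqref{eq:expmomguaranteeRobust}, conditionally on $\ss_u = i$ the log-moment-generating function of $S_u$ is at most $\zeta\,\expec[S_u\mid\ss_u=i] + c\zeta^2$ for every $\zeta\in\mathbb{R}$; together with the uniform bound on $\bigl|\expec[S_u\mid\ss_u=i] - \bbias^1(u)\bigr|$ already established in the proof of Proposition~\ref{proposition:conMean} and the finiteness of $\states$, this makes $S_u - \bbias^1(u)$ (unconditionally) sub-Gaussian with parameters independent of $n$, and likewise for $S_v - \bbias^1(v)$. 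I would then use the polarization identity $ab = \tfrac12[(a+b)^2 - a^2 - b^2]$ to reduce the concentration of $A$ to exponential-moment bounds for the squares $(S^\ell_u-\bbias^1(u))^2$, $(S^\ell_v-\bbias^1(v))^2$ and $\bigl((S^\ell_u-\bbias^1(u)) + (S^\ell_v-\bbias^1(v))\bigr)^2$; each is finite for small $|\zeta|$, uniformly in $n$, since a squared sub-Gaussian variable is sub-exponential. A Bernstein-type bound for i.i.d.\ sub-exponential variables then shows $A$ is within $\eps''$ of its mean outside an event of probability $\exp\bigl(-\Omega(K\min\{(\eps'')^2,\eps''\})\bigr)$, which is $O(n^{-\gamma})$ once $K = \kappa\log n$ with $\kappa$ large enough.

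To finish, I would transfer these additive estimates through $-\ln$. Combining the two steps, the argument of the logarithm in $\edist(u,v)$ lies within $(\eps')^2 + \eps'' + O(\eigenratio)$ of $e^{-(\weight(u,v)+\bias(u)+\bias(v))}$. Because $u$ and $v$ are separated by a bounded number of levels, $\weight(u,v)$ is at most a constant depending only on $g$, and $\bias(w) = -\ln\bbias^2(w)$ is bounded because $|\bbias^2(w) - 1| < \delta$; hence $e^{-(\weight(u,v)+\bias(u)+\bias(v))}$ is bounded below by a positive constant depending only on $g$ and $\delta$, and on that range $-\ln$ is Lipschitz. Choosing $\eps'$ and $\eps''$ small (which, through Proposition~\ref{proposition:conMean} and the Bernstein bound, dictates how large $\kappa$ must be) and $\eigenratio$ small (this is exactly where the dependence $\eigenratio = \eigenratio(\eps,\gamma,\delta,c)$ enters) then yields $|\edist(u,v) - (\weight(u,v)+\bias(u)+\bias(v))| < \eps$, and a union bound over the at most $\binom{6}{2}$ pairs keeps the failure probability $O(n^{-\gamma})$.

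The hard part will be the tail control of the products $S^\ell_u S^\ell_v$: one has to push the exponential-moment hypothesis \eqref{eq:expmomguaranteeRobust} through both a product of two \emph{dependent} recursive estimators (hence the polarization step rather than a simple independence argument) and the empirical-mean recentering, all while keeping the three accumulated error terms---$(\eps')^2$ from recentering, $\eps''$ from sampling, and $O(\eigenratio)$ from the spurious eigenvector directions---simultaneously under $\eps$ after the logarithm is applied. The recentering via $\bar S_u$ is the genuinely new ingredient compared with the Gaussian proof; everything else is a routine adaptation of Proposition~\ref{proposition:concentration}.
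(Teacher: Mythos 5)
Your proposal is correct, but it takes a genuinely different route from the paper at the core probabilistic step. You reduce concentration of $A = \frac1K\sum_\ell \widetilde{S}^\ell_u\widetilde{S}^\ell_v$ (with $\widetilde{S}_u = S_u - \bbias^1(u)$) to concentration of the three squares $\widetilde{S}_u^2$, $\widetilde{S}_v^2$, $(\widetilde{S}_u + \widetilde{S}_v)^2$ via polarization, argue each $\widetilde{S}$ is sub-Gaussian from \eqref{eq:expmomguaranteeRobust} plus the uniform bound on the conditional means (so the squares are sub-exponential), and finish with Bernstein. The paper instead bounds the moment-generating function of the product $\widetilde{S}_u\widetilde{S}_v$ directly: using conditional independence given $\ss_{\{u,v\}}$, it applies \eqref{eq:expmomguaranteeRobust} to $S_v$ to get an exponential of a linear and a quadratic in $\widetilde{S}_u$, Gaussianizes the quadratic by introducing an auxiliary standard normal $N$ (via $\E[e^{\alpha N}] = e^{\alpha^2/2}$), applies \eqref{eq:expmomguaranteeRobust} a second time to $S_u$, then splits the result by Cauchy--Schwarz and expands to get $\E[e^{\zeta\widetilde{S}_u\widetilde{S}_v}] = 1 + \zeta\E[\widetilde{S}_u\widetilde{S}_v] + O(\zeta^2)$, closing with Markov. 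Your polarization route is more modular and leans on standard sub-Gaussian/sub-exponential machinery; the paper's approach is self-contained, avoids having to verify sub-exponentiality of the squares, and directly produces the small-$\zeta$ MGF expansion it needs. Both handle the two GTR-specific ingredients---the empirical-mean recentering through Proposition~\ref{proposition:conMean} and \eqref{e:empiricalMeanSub}, and the $O(\eigenratio)$ contamination from higher eigenvectors in \eqref{e:covariance1}---identically. One small technical note on your version: from \eqref{eq:expmomguaranteeRobust} together with the bounded conditional means you actually get $\E[e^{\zeta\widetilde{S}_u}] \leq e^{\Upsilon|\zeta| + c\zeta^2}$, which is not the pure sub-Gaussian form $e^{C\zeta^2}$; this is harmless (the tails are still sub-Gaussian after an additive offset, and the squares are still sub-exponential), but it is worth stating precisely rather than calling $\widetilde{S}_u$ ``unconditionally sub-Gaussian.''
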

\begin{proof}
This proposition is proved similarly to Proposition \ref{proposition:concentration} by establishing concentration of
$
\frac{1}{K} \sum_{\ell=1}^K \widetilde{S}^\ell_u
\widetilde{S}^\ell_v,
$
where $\widetilde{S}^\ell_u = S^\ell_{u} - \bbias^1(u)$,
around its mean which is approximately
$
e^{-\weight(u,v) - \bias(u) - \bias(v)}
$
by
equation \eqref{e:covariance1}. The only difference
with Proposition~\ref{proposition:concentration} is
that, in this non-Gaussian case, we must
estimate the exponential moment
directly using \eqref{eq:expmomguaranteeRobust}.
We use an argument of~\cite{PeresRoch:11,Roch:10}.

Let $\zeta > 0$.
Let $N$ be a standard normal. Using that
$\E[e^{\alpha N}] = e^{\alpha^2/2}$ and applying
\eqref{eq:biasguaranteeRobust} and
\eqref{eq:expmomguaranteeRobust},
\begin{eqnarray*}
\E[e^{\zeta \widetilde{S}_u \widetilde{S}_v}|\ss_{\{u,v\}}]
&\leq&
\E[e^{(\zeta \widetilde{S}_u) \E[\widetilde{S}_v|\ss_v]
+ c (\zeta \widetilde{S}_u)^2 }|\ss_{\{u,v\}}]\\
&=& \E[e^{\zeta \widetilde{S}_u \E[\widetilde{S}_v|\ss_v]
+ \sqrt{2c} \zeta \widetilde{S}_u N }|\ss_{\{u,v\}}]\\
&\leq& \E[e^{(\zeta \E[\widetilde{S}_v|\ss_v]
+ \sqrt{2c} \zeta N)\E[\widetilde{S}_u|\ss_u]
+  c(\zeta \E[\widetilde{S}_v|\ss_v]
+ \sqrt{2c} \zeta N)^2}|\ss_{\{u,v\}}].
\end{eqnarray*}
We factor out the constant term and apply
Cauchy-Schwarz on the linear and quadratic
terms in $N$
\begin{eqnarray*}
&&\E[e^{\zeta \widetilde{S}_u \widetilde{S}_v}|\ss_{\{u,v\}}]\\
&& \quad \leq
e^{\zeta \E[\widetilde{S}_u|\ss_u] \E[\widetilde{S}_v|\ss_v]} e^{c\zeta^2\Upsilon^2} \E[e^{4c^2 \zeta^2 N^2}]^{1/2}\\
&& \qquad \times\E\left[
e^{2(\sqrt{2c} \zeta \E[\widetilde{S}_u|\ss_u]
+ 2 c \sqrt{2c} \zeta^2 \E[\widetilde{S}_v|\ss_v])N}
|\ss_{\{u,v\}} \right]^{1/2}
\\
&& \quad \leq
e^{\zeta \E[\widetilde{S}_u|\ss_u] \E[\widetilde{S}_v|\ss_v]}
e^{c\zeta^2\Upsilon^2}
\frac{1}{(1 - 8c^2\zeta^2)^{1/4}}
e^{2c\Upsilon^2\zeta^2 (1 + 2c\zeta)^2}\\
&& \quad = 1
+ \zeta \E[\widetilde{S}_u \widetilde{S}_v|\ss_{\{u,v\}}]
+ \Upsilon' \zeta^2
+ O(\zeta^3),
\end{eqnarray*}
as $\zeta \to 0$,
where $\Upsilon$ was defined in the proof
of Proposition~\ref{proposition:conMean}
and $\Upsilon' > 0$ is a constant depending on $\Upsilon$
and $c$. Taking expectations and expanding
\begin{eqnarray*}
e^{-\zeta(\E[\widetilde{S}_u \widetilde{S}_v] + \eps)}
\E[e^{\zeta \widetilde{S}_u \widetilde{S}_v}]
= 1 - \eps\zeta + \Upsilon'\zeta^2 + O(\zeta^3) < 1,
\end{eqnarray*}
for $\zeta$ small enough, independently of $n$.
Applying Markov's inequality
gives the result.
\end{proof}

\begin{proposition}[Recursive Linear Estimator: Bias]\label{proposition:biasRobust}
Assuming (\ref{eq:biasguaranteeRobust}), (\ref{eq:expmomguaranteeRobust}), and (\ref{eq:weightassumptionRobust}) hold
for some $\eps > 0$ that is small enough,
we have
\begin{equation*}
\expec[S_{x} \,|\, \ss_x] = \sum_{l=1}^{\nstates} \bbias^l(x) \eigenvec{l}_{\ss_{x}},
\end{equation*}
for some $\bbias^l(x)$ such that  $|\bbias^2(x) - 1| < \delta$ and $|  \bbias^l(x) / \bbias^2(x)|<\eigenratio$ for $l = 3,\ldots,\nstates$.
\end{proposition}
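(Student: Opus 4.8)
The plan is to mimic the proof of Proposition~\ref{proposition:bias} from the Gaussian case; the one genuinely new feature is that we must also keep track of the spurious eigenvector components $\bbias^l(\cdot)$, $l\ge 3$, and verify that their size relative to $\bbias^2(\cdot)$ is preserved by the recursion. First I would show that the estimated biases are accurate, namely $|\ebias(y_\alpha)-\bias(y_\alpha)| = O(\eps)$ for $\alpha=1,2$. This is the same telescoping computation as in the proof of Proposition~\ref{proposition:bias}: substituting into the definition of $\ebias(y_1)$ the concentration bounds of Proposition~\ref{proposition:concentrationRobust} (each $\edist(u,v)$ is within $\eps$ of $\weight(u,v)+\bias(u)+\bias(v)$) together with the edge-weight bound \eqref{eq:weightassumptionRobust}, the terms $\bias(z_{21}),\bias(z_{22})$ cancel and, by additivity of $\weight$ along the tree, the $\weight$-terms collapse to leave $\bias(y_1)$ up to a constant multiple of $\eps$; similarly for $y_2$. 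Hence $\ebbias^2(y_\alpha)=e^{-\ebias(y_\alpha)}$ and $\bbias^2(y_\alpha)=e^{-\bias(y_\alpha)}$ agree up to a factor $e^{O(\eps)}$, and $\hat\theta_{e_\alpha}=e^{-\eweight_{e_\alpha}}$ and $\edgeeigenval{2}_{e_\alpha}=e^{-\weight_{e_\alpha}}$ agree up to a factor $e^{O(\eps)}$ as well.

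Next, conditioning $S_x=\omega_{y_1}S_{y_1}+\omega_{y_2}S_{y_2}$ on $\ss_x$, using the Markov property and the inductive hypothesis \eqref{eq:biasguaranteeRobust}, and then the fact (already recorded in \eqref{e:conditionalMeanRobust}) that each $\eigenvec{l}$ is a right eigenvector of $M^{e_\alpha}=e^{\weight_{e_\alpha}Q}$ with eigenvalue $\edgeeigenval{l}_{e_\alpha}=e^{\weight_{e_\alpha}\eigenval{l}}$, one obtains
\[
\expec[S_x\mid\ss_x=i] \;=\; \sum_{l=1}^{\nstates}\Bigg(\sum_{\alpha=1,2}\omega_{y_\alpha}\,\bbias^l(y_\alpha)\,\edgeeigenval{l}_{e_\alpha}\Bigg)\eigenvec{l}_i \;\equiv\; \sum_{l=1}^{\nstates}\bbias^l(x)\,\eigenvec{l}_i,
\]
which identifies $\bbias^l(x)=\omega_{y_1}\bbias^l(y_1)\edgeeigenval{l}_{e_1}+\omega_{y_2}\bbias^l(y_2)\edgeeigenval{l}_{e_2}$. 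Since $\edgeeigenval{1}\equiv 1$, the component $\bbias^1(x)$ is uncontrolled, exactly as the statement allows.

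Then I would control $\bbias^2(x)$; this is the main obstacle. Here $\bbias^2(x)=\omega_{y_1}\bbias^2(y_1)\edgeeigenval{2}_{e_1}+\omega_{y_2}\bbias^2(y_2)\edgeeigenval{2}_{e_2}$ with all of $\omega_{y_\alpha},\bbias^2(y_\alpha),\edgeeigenval{2}_{e_\alpha}$ strictly positive. By the first step each summand differs by a factor $e^{O(\eps)}$ from $\omega_{y_\alpha}\ebbias^2(y_\alpha)\hat\theta_{e_\alpha}$, and the weights $\omega_{y_\alpha}$ were chosen in \eqref{eq:solutionRobust}--\eqref{eq:optimizationRobust} precisely so that $\omega_{y_1}\ebbias^2(y_1)\hat\theta_{e_1}+\omega_{y_2}\ebbias^2(y_2)\hat\theta_{e_2}=1$; thus $\bbias^2(x)$ is a near-convex combination of nonnegative quantities that sum to $1$, whence $\bbias^2(x)=1+O(\eps)$ and $|\bbias^2(x)-1|<\delta$ once $\eps$ is small enough. (Equivalently, from \eqref{eq:solutionRobust}, using $\ebbias^2(y_\alpha)$ bounded above and below and $\hat\theta_{e_\alpha}\in[e^{-g-\eps},1]$ one checks $\omega_{y_\alpha}=O(1)$, giving the same conclusion.) This bookkeeping — tracking how the previously accumulated estimation errors propagate through one recursive step — is the delicate point, but it is essentially word-for-word the argument of Proposition~\ref{proposition:bias}.

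Finally, for $l\ge 3$: because the eigenvalues satisfy $\eigenval{l}\le\eigenval{2}=-1$ and $\weight_{e_\alpha}>0$, we have $0<\edgeeigenval{l}_{e_\alpha}=e^{\weight_{e_\alpha}\eigenval{l}}\le e^{-\weight_{e_\alpha}}=\edgeeigenval{2}_{e_\alpha}$. Combining this with the inductive ratio bound $|\bbias^l(y_\alpha)|<\eigenratio\,\bbias^2(y_\alpha)$, the positivity of all the factors, and the triangle inequality,
\[
|\bbias^l(x)| \;\le\; \sum_{\alpha=1,2}\omega_{y_\alpha}\,|\bbias^l(y_\alpha)|\,\edgeeigenval{l}_{e_\alpha} \;<\; \eigenratio\sum_{\alpha=1,2}\omega_{y_\alpha}\,\bbias^2(y_\alpha)\,\edgeeigenval{2}_{e_\alpha} \;=\; \eigenratio\,\bbias^2(x),
\]
so $|\bbias^l(x)/\bbias^2(x)|<\eigenratio$, using $\bbias^2(x)>0$ from the previous step. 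This closes the induction; note that the ratio bound is merely preserved — not improved — at each step (eigenvectors with eigenvalue exactly $-1$ do not decay), and strictness is inherited from the base case at the leaves, where $\bbias^l=\alpha_l$, $\bbias^2=\alpha_2$, and $|\alpha_l/\alpha_2|<\eigenratio$ by Proposition~\ref{prop:rateMatrixApprox}.
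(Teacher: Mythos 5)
Your proposal is correct and follows essentially the same route as the paper's proof: estimate $\ebias(y_\alpha)$ via the telescoping four-point cancellation using Proposition~\ref{proposition:concentrationRobust}, expand $\expec[S_x\mid\ss_x]$ through the Markov property and the eigenvector relation \eqref{e:conditionalMeanRobust} to read off $\bbias^l(x)=\omega_{y_1}\bbias^l(y_1)\edgeeigenval{l}_{e_1}+\omega_{y_2}\bbias^l(y_2)\edgeeigenval{l}_{e_2}$, use the normalization constraint in \eqref{eq:optimizationRobust} plus the $O(\eps)$-accuracy of all estimated factors to pin $\bbias^2(x)$ near $1$, and then bound the ratio for $l\ge3$ by positivity of $\omega_{y_\alpha}$ and $\bbias^2(y_\alpha)$ together with $\edgeeigenval{l}_{e_\alpha}\le\edgeeigenval{2}_{e_\alpha}$. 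The only cosmetic difference is that the paper's proof text tracks the error as $O(\eps+\eigenratio)$ (separating the sampling and spurious-eigenvector contributions), while you write $O(\eps)$ by relying directly on the statement of Proposition~\ref{proposition:concentrationRobust}, which already subsumes the $\eigenratio$ dependence in its choice of constants.
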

\begin{proof}
We first show that the biases at $y_1, y_2$ are accurately estimated. Applying a similar  proof to that of Proposition \ref{proposition:bias} (using Proposition~\ref{proposition:concentrationRobust}  in place of Proposition~\ref{proposition:concentration}) we have that
\begin{equation*}
|\ebias(y_1) - \bias(y_1)| \leq O(\eps + \eigenratio).
\end{equation*}
The same inequality holds for $y_2$.
Taking $\eps, \delta$ small enough, our previous bounds on $\bbias$, $\theta$ and
their estimates, we derive from equation \eqref{eq:solutionRobust} that $\omega_{y_\alpha} = \Theta(1)$, $\alpha=1,2$ with high probability.  We now calculate the bias at $x$ to be,
\begin{eqnarray*}
\expec[S_{x} \,|\, \ss_{x} = i]
&=& \expec[\omega_{y_1}S_{y_1} + \omega_{y_2}S_{y_2} \,|\, \ss_{x} = i]\\
&=& \sum_{\alpha=1,2} \omega_{y_\alpha} \sum_{l=1}^\nstates  \bbias^l(y_\alpha)  \edgeeigenval{l}_{e_\alpha} \eigenvec{l}_{j}\\
&=&   \sum_{l=1}^\nstates \left( \omega_{y_1} \bbias^l(y_1)  \edgeeigenval{l}_{e_1}   +   \omega_{y_2} \bbias^l(y_2)  \edgeeigenval{l}_{e_2}  \right)\eigenvec{l}_{j}\\
&\equiv  &   \sum_{l=1}^\nstates  \bbias^l(x)  \eigenvec{l}_{j}\\
\end{eqnarray*}
where we used equation \eqref{e:conditionalMeanRobust} on line 2.
Observe that since $\omega_{y_1}, \omega_{y_2}$ are positive and $0< \edgeeigenval{l}_{e_\alpha}   \leq \edgeeigenval{2}_{e_\alpha}$ for $l\geq 3$,
\begin{eqnarray*}
\left| \frac{\bbias^l(x)}{\bbias^2(x)} \right|
&=&
\left|  \frac{   \omega_{y_1} \bbias^l(y_1)  \edgeeigenval{l}_{e_1}   +   \omega_{y_2} \bbias^l(y_2)  \edgeeigenval{l}_{e_2}   }{   \omega_{y_1} \bbias^2(y_1)  \edgeeigenval{2}_{e_1}   +   \omega_{y_2} \bbias^2(y_2)  \edgeeigenval{2}_{e_2}   } \right|\\
&\leq&
\left|  \frac{   \omega_{y_1}  \eigenratio\bbias^2(y_1)  \edgeeigenval{2}_{e_1}   +   \omega_{y_2} \eigenratio \bbias^2(y_2)  \edgeeigenval{2}_{e_2}   }{   \omega_{y_1} \bbias^2(y_1)  \edgeeigenval{2}_{e_1}   +   \omega_{y_2} \bbias^2(y_2)  \edgeeigenval{2}_{e_2}   } \right|\\
&=&  \eigenratio.
\end{eqnarray*}

Applying the bounds on $\omega_{y_\alpha}$ and $\ebias(y_\alpha)$ for $\alpha=1,2$ we have that
\begin{eqnarray*}
\bbias^2(x)
&=& \omega_{y_1} \bbias^2(y_1) \edgeeigenval{2}_{e_1} +  \omega_{y_2} \bbias^2(y_2) \edgeeigenval{2}_{e_2}\\
&=& \omega_{y_1} e^{-\bias(y_1)} \edgeeigenval{2}_{e_1} +  \omega_{y_2} e^{-\bias(y_2)} \edgeeigenval{2}_{e_2}\\
&\leq& \omega_{y_1} e^{-\ebias(y_1) +O(\eps+ \eigenratio)}
(\eedgeeigenval{2}_{e_1} + O(\eps+ \eigenratio))\\
&&\quad  +  \omega_{y_2} e^{-\ebias(y_2)+O(\eps+ \eigenratio)} (\eedgeeigenval{2}_{e_2}  +O(\eps+ \eigenratio))\\
&=& (\omega_{y_1} \ebbias^2(y_1) \eedgeeigenval{2}_{e_1} +  \omega_{y_2} \ebbias^2(y_2) \eedgeeigenval{2}_{e_2}) + O(\eps+ \eigenratio)\\
&=& 1 +  O(\eps+ \eigenratio).
\end{eqnarray*}
Choosing $\eps$ and $\rho$ small enough, it satisfies $|\bbias^2(x) - 1| < \delta$.
\end{proof}
\begin{proposition}[Recursive Linear Estimator: Exponential Bound]\label{proposition:exponentialRobust}
There is $c > 0$ such that,
assuming (\ref{eq:biasguaranteeRobust}), (\ref{eq:expmomguaranteeRobust}), and (\ref{eq:weightassumptionRobust}) hold,
we have for all $i\in\states$
\begin{equation*}
\Gamma_{x}^i(\zeta)
\leq \zeta \expec[S_{x}\,|\,\ss_{x} = i] + c \zeta^2.
\end{equation*}
\end{proposition}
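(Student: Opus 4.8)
The plan is to lift the recursion behind the Gaussian variance bound (Proposition~\ref{proposition:variance}) to the level of exponential moments, exploiting the conditional independence of the two subtrees below $x$ and replacing the Gaussian computation by Hoeffding's lemma. I would argue inductively. The base case at the leaves is immediate: there $S_u = \eeigenvec{2}_{\ss_u}$ is a deterministic function of $\ss_u$, so $\Gamma_u^i(\zeta) = \zeta\,\expec[S_u\mid\ss_u = i]$ and \eqref{eq:expmomguaranteeRobust} holds for every $\zeta\in\real$ with any $c\geq 0$. For the inductive step I assume \eqref{eq:biasguaranteeRobust}, \eqref{eq:expmomguaranteeRobust}, \eqref{eq:weightassumptionRobust} at $y_1,y_2$ and all their descendants, together with the bounds on $\omega_{y_1},\omega_{y_2},\ebias(y_\alpha)$ already extracted in the proof of Proposition~\ref{proposition:biasRobust}.

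Since $S_x = \omega_{y_1}S_{y_1} + \omega_{y_2}S_{y_2}$ and $S_{y_\alpha}$ is a function of $\sb{y_\alpha}$ only, the global Markov property gives $\sb{y_1}\perp\sb{y_2}\mid\ss_x$, so
$$\expec\!\left[e^{\zeta S_x}\mid\ss_x = i\right] = \prod_{\alpha=1,2}\expec\!\left[e^{\zeta\omega_{y_\alpha}S_{y_\alpha}}\mid\ss_x = i\right].$$
Conditioning each factor on $\ss_{y_\alpha}$ (using $\sb{y_\alpha}\perp\ss_x\mid\ss_{y_\alpha}$) and applying \eqref{eq:expmomguaranteeRobust} at $y_\alpha$ with parameter $\zeta\omega_{y_\alpha}$,
$$\expec\!\left[e^{\zeta\omega_{y_\alpha}S_{y_\alpha}}\mid\ss_x = i\right] = \sum_{j}M^{e_\alpha}_{ij}\,\expec\!\left[e^{\zeta\omega_{y_\alpha}S_{y_\alpha}}\mid\ss_{y_\alpha} = j\right] \leq e^{c\,\omega_{y_\alpha}^2\zeta^2}\sum_{j}M^{e_\alpha}_{ij}\,e^{\zeta\omega_{y_\alpha}\expec[S_{y_\alpha}\mid\ss_{y_\alpha}=j]}.$$
The crux is the remaining sum. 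Although $\expec[S_{y_\alpha}\mid\ss_{y_\alpha}=j] = \bbias^1(y_\alpha) + \sum_{l\geq 2}\bbias^l(y_\alpha)\eigenvec{l}_j$ need not be small (there is no control on $\bbias^1$), the constant first-eigenvector term drops out of its \emph{range} over $j$, which is bounded by $2\Upsilon$ with $\Upsilon=(1+\delta)(1+\nstates\eigenratio)\max_j 1/\sqrt{\pi_j}$ exactly the quantity from the proof of Proposition~\ref{proposition:conMean}. Viewing $j\mapsto\expec[S_{y_\alpha}\mid\ss_{y_\alpha}=j]$ as a bounded random variable under the law $M^{e_\alpha}_{i\,\cdot}$ and invoking Hoeffding's lemma,
$$\sum_{j}M^{e_\alpha}_{ij}\,e^{\zeta\omega_{y_\alpha}\expec[S_{y_\alpha}\mid\ss_{y_\alpha}=j]} \leq \exp\!\left(\zeta\omega_{y_\alpha}\,\expec[S_{y_\alpha}\mid\ss_x = i] + \tfrac{\Upsilon^2}{2}\,\omega_{y_\alpha}^2\zeta^2\right),$$
since $\sum_j M^{e_\alpha}_{ij}\expec[S_{y_\alpha}\mid\ss_{y_\alpha}=j] = \expec[S_{y_\alpha}\mid\ss_x=i]$. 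Multiplying the two factors and using $\sum_\alpha\omega_{y_\alpha}\expec[S_{y_\alpha}\mid\ss_x=i] = \expec[S_x\mid\ss_x=i]$ yields
$$\Gamma_x^i(\zeta)\ \leq\ \zeta\,\expec[S_x\mid\ss_x = i] + \left(c + \tfrac{\Upsilon^2}{2}\right)\!\left(\omega_{y_1}^2 + \omega_{y_2}^2\right)\zeta^2.$$

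It remains to close the induction by absorbing $(c+\Upsilon^2/2)(\omega_{y_1}^2+\omega_{y_2}^2)$ back into $c$, and this is the main obstacle: unlike the bounded-variance step of Proposition~\ref{proposition:variance}, here the quadratic coefficient must genuinely \emph{contract} at each of the $\Theta(\log n)$ levels, since otherwise $c$ would grow geometrically in the depth. As in Proposition~\ref{proposition:variance}, \eqref{eq:solutionRobust} gives $\omega_{y_1}^2+\omega_{y_2}^2 = \big((\ebbias^2(y_1))^2(\edgeeigenval{2}_{e_1})^2 + (\ebbias^2(y_2))^2(\edgeeigenval{2}_{e_2})^2\big)^{-1}$; Proposition~\ref{proposition:biasRobust} gives $\ebbias^2(y_\alpha) = 1+O(\eps+\eigenratio)$, and $\edgeeigenval{2}_{e_\alpha} = e^{-\weight_{e_\alpha}}\geq e^{-g}$ with $2e^{-2g}>1$ in the KS regime, so $\omega_{y_1}^2+\omega_{y_2}^2 \leq (2e^{-2g})^{-1}(1+O(\eps+\eigenratio+\delta)) \leq 1-\eta$ for a fixed $\eta=\eta(g)>0$ once $\eps,\eigenratio,\delta$ are small enough. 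Since $\Upsilon$ depends only on $Q,\delta,\eigenratio$, choosing $c$ large enough that $(c+\Upsilon^2/2)(1-\eta)\leq c$ gives $\Gamma_x^i(\zeta)\leq\zeta\expec[S_x\mid\ss_x=i]+c\zeta^2$ for all $i\in\states$ and all $\zeta\in\real$, completing the induction. A secondary point requiring care, handled above, is that the unbounded $\bbias^1$-component of $S_{y_\alpha}$ must not leak into the $\zeta^2$ term; this is why one uses the range rather than the magnitude of $\expec[S_{y_\alpha}\mid\ss_{y_\alpha}=\cdot]$ in Hoeffding's lemma—equivalently, one may run the whole computation with the mean-subtracted estimators $\widetilde S_u = S_u - \bbias^1(u)$, for which $\widetilde S_x = \omega_{y_1}\widetilde S_{y_1}+\omega_{y_2}\widetilde S_{y_2}$ because $\edgeeigenval{1}_{e_\alpha}=1$.
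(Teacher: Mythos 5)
Your argument is correct and has the same skeleton as the paper's: factor $\Gamma_x^i$ across the two subtrees by the Markov property, pass the inductive exponential-moment hypothesis \eqref{eq:expmomguaranteeRobust} through each $y_\alpha$, bound the remaining sum $\sum_j M^{e_\alpha}_{ij}\exp(\zeta\omega_{y_\alpha}\expec[S_{y_\alpha}\mid\ss_{y_\alpha}=j])$, and close the induction using the strict contraction $\omega_{y_1}^2+\omega_{y_2}^2 \leq 1-\eta$ available in the KS regime. The one point of divergence is that middle bounding step. The paper first strips off the $\bbias^1$-component and then invokes Lemma~\ref{lemma:stepRobust}, cited from~\cite{PeresRoch:11,Roch:10} and stated without proof, to produce a $c'\zeta^2\omega_{y_\alpha}^2$ quadratic term; you instead apply Hoeffding's lemma directly to the bounded variable $j\mapsto\expec[S_{y_\alpha}\mid\ss_{y_\alpha}=j]$ under $M^{e_\alpha}_{i\,\cdot}$, observing that the uncontrolled constant $\bbias^1(y_\alpha)$ contributes nothing to the range and hence nothing to the quadratic constant. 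This is in fact exactly the content of Lemma~\ref{lemma:stepRobust} (take $b_l = \bbias^l(y_\alpha)$ with $|b_l|<2$ and use $\sum_j M_{ij}\eigenvec{l}_j = \edgeeigenval{l}\eigenvec{l}_i$ to identify $\sum_{l\geq 2}\edgeeigenval{l}b_l\eigenvec{l}_i$ with the mean under $M_{i\,\cdot}$), so the two routes coincide once the cited lemma is unpacked. Your version is marginally more self-contained, yields the explicit quadratic constant $\Upsilon^2/2$ in place of the unspecified $c'(Q)$, and the range-versus-magnitude remark (equivalently, the mean-subtracted $\widetilde S_u$ reformulation) makes the neutralization of the first-eigenvector bias transparent rather than implicit in the paper's separate $\zeta\sum_\alpha\bbias^1(y_\alpha)\omega_{y_\alpha}$ bookkeeping line.
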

\begin{proof}
We use the following lemma suitably generalized from~\cite{PeresRoch:11,Roch:10}.
\begin{lemma}[Recursion Step]\label{lemma:stepRobust}
Let $M = e^{\weight Q}$ as above with eigenvectors
$$
\eigenvec{1},\eigenvec{2},\ldots,\eigenvec{\nstates},
$$
with corresponding eigenvalues $1=e^{\eigenval{1}}\geq    \ldots \geq e^{\eigenval{\nstates}}$.  Let $b_2,\ldots,b_\nstates$ we arbitrary constants with $|b_i| < 2$.
Then there is $c' > 0$ depending
on $Q$ such that for all $i\in\states$
\begin{equation*}
F(x) \equiv \sum_{j \in \states} M_{i j} \exp\left(  x  \sum_{l=2}^\nstates  b_l  \eigenvec{l}_{j} \right)
\leq \exp\left(x  \sum_{l=2}^\nstates \lambda_l b_l  \eigenvec{l}_{i}  + c' x^2   \right)
\equiv G(x),
\end{equation*}
for all $x \in \real$.
\end{lemma}
We have by the Markov property and Lemma~\ref{lemma:stepRobust} above,
\begin{eqnarray*}
\Gamma_x^i(\zeta)
&=& \ln\expec\left[\exp\left(\zeta \sum_{\alpha=1,2}S_{y_\alpha} \omega_{y_\alpha} \right)\,|\,\ss_x = i\right]\\
&=& \sum_{\alpha=1,2}\ln\expec\left[\exp\left(\zeta S_{y_\alpha} \omega_{y_\alpha} \right)\,|\,\ss_x = i\right]\\
&=& \sum_{\alpha=1,2}\ln\left(\sum_{j\in\states} M^{e_\alpha}_{ij}
\expec\left[\exp\left(\zeta S_{y_\alpha} \omega_{y_\alpha} \right)\,|\,\ss_{y_\alpha} = j\right]\right)\\
&=& \sum_{\alpha=1,2}\ln\left(\sum_{j\in\states} M^{e_\alpha}_{ij}
\exp\left(\Gamma_{y_\alpha}^j\left(\zeta \omega_{y_\alpha} \right)\right)\right)\\
&\leq& \sum_{\alpha=1,2}\ln\left(\sum_{j\in\states} M^{e_\alpha}_{ij}
\exp\left(
\zeta \omega_{y_\alpha} \expec[S_{y_\alpha}\,|\,\ss_{y_\alpha} = j] + c \zeta^2\omega_{y_\alpha}^2
\right)\right)\\
&=& c\zeta^2 \sum_{\alpha=1,2} \omega_{y_\alpha}^2 + \sum_{\alpha=1,2}\ln\left(\sum_{j\in\states} M^{e_\alpha}_{ij}
\exp\left(\zeta \omega_{y_\alpha}    \sum_{l=1}^{\nstates} \bbias^l(y_\alpha) \eigenvec{l}_{j}   \right)\right)\\
&=& c\zeta^2 \sum_{\alpha=1,2} \omega_{y_\alpha}^2
+\zeta \sum_{\alpha=1,2} \bbias^1(y_\alpha)\omega_{y_\alpha}\\
&&\quad + \sum_{\alpha=1,2}\ln\left(\sum_{j\in\states} M^{e_\alpha}_{ij}
\exp\left(\zeta \omega_{y_\alpha}    \sum_{l=2}^{\nstates} \bbias^l(y_\alpha) \eigenvec{l}_{j}   \right)\right)\\
&\leq& c\zeta^2 \sum_{\alpha=1,2} \omega_{y_\alpha}^2
+  \zeta  \sum_{\alpha=1,2} \omega_{y_\alpha}  \sum_{l=1}^{\nstates}
\edgeeigenval{l}_{e_\alpha} \bbias^l(y_\alpha) \eigenvec{l}_{i}
+   \sum_{\alpha=1,2}   c'  \zeta^2 \omega_{y_\alpha}^2 \\
&=& \zeta \expec\left[S_{x}  \,|\,\ss_x = i\right]\
+ \zeta^2 \left (c + c' \right) \sum_{\alpha=1,2}   \omega_{y_\alpha}^2
\end{eqnarray*}
Take $c$ large enough so that $c + c'  < c(1+\eps')$ for some small $\eps' > 0$.
Moreover, from (\ref{eq:solutionRobust})
\begin{eqnarray*}
\omega_{y_1}^2 + \omega_{y_2}^2
&=& \left(\frac{\theta_{e_1}^2}{(\theta_{e_1}^2 + \theta_{e_2}^2)^2} + \frac{\theta_{e_2}^2}{(\theta_{e_1}^2 + \theta_{e_2}^2)^2}\right)(1 + O(\eps + \delta+ \eigenratio))\\
&=& \left(\frac{1}{\theta_{e_1}^2 + \theta_{e_2}^2}\right)(1 + O(\eps + \delta+\eigenratio))\\
&\leq& \frac{1}{2(\theta^*)^2}(1 + O(\eps + \delta+ \eigenratio)) < 1,
\end{eqnarray*}
where $\theta^* = e^{-g}$ so that $2(\theta^*)^2 > 1$.
Hence,
\begin{equation*}
\Gamma_{x}^i(\zeta)
\leq \zeta \expec[S_{x}\,|\,\ss_{x} = i] + c \zeta^2.
\end{equation*}
\end{proof}

\section{Concluding remarks}

We have shown how to reconstruct
latent tree Gaussian and GTR models
using $O(\log^2 n)$ samples in the
KS regime. In contrast, a straightforward
application of previous techniques
$O(\log^3 n)$ samples. Several
questions arise from our work:
\begin{itemize}
\item Can this reconstruction be done
using only $O(\log n)$ samples?
Indeed this is the case for the CFN model~\cite{Mossel:04a}
and it is natural to conjecture that it may
be true more generally.
However our current techniques
are limited by our need to use fresh samples
on each level of the tree to avoid unwanted
correlations between coefficients and samples
in the recursive conditions.

\item Do our techniques extend to general trees?
The boosted algorithm used here
has been generalized to non-homogeneous trees using a combinatorial algorithm
of~\cite{DaMoRo:11a}
(where edge weights are discretized to
avoid the robustness issues considered
in this paper). However general trees
have, in the worst case, linear diameters. To
apply our results,
one would need to control the depth of the subtrees
used for root-state estimation in the combinatorial
algorithm.
We leave this extension for future work.
\end{itemize}

\clearpage

\bibliographystyle{alpha}
\bibliography{thesis}

\end{document}